\newtheorem{thm}{Theorem}[section]
\newtheorem{lem}[thm]{Lemma}
\newtheorem{cor}[thm]{Corollary}
\newtheorem{con}[thm]{Conjecture}
\newtheorem{prop}[thm]{Proposition}
\theoremstyle{definition}
\newtheorem{definition}[thm]{Definition}
\theoremstyle{remark}
\newtheorem{case}{Case}
\numberwithin{subcase}{case}
\begin{document}
\title{List rankings and on-line list rankings of graphs}
\author{Daniel C. McDonald}
\address{Department of Mathematics, University of Illinois, 
Urbana, IL, USA}
\email{dmcdona4@illinois.edu}
\date{}
\maketitle 

\begin{abstract}
A $k$-ranking of a graph $G$ is a labeling of its vertices from $\{1,\ldots,k\}$ such that any nontrivial path whose endpoints have the same label contains a larger label.  The least $k$ for which $G$ has a $k$-ranking is the ranking number of $G$, also known as tree-depth.  The list ranking number of $G$ is the least $k$ such that if each vertex of $G$ is assigned a set of $k$ potential labels, then $G$ can be ranked by labeling each vertex with a label from its assigned list.  Rankings model a certain parallel processing problem in manufacturing, while the list ranking version adds scheduling constraints.  We compute the list ranking number of paths, cycles, and trees with many more leaves than internal vertices.  Some of these results follow from stronger theorems we prove about on-line versions of list ranking, where each vertex starts with an empty list having some fixed capacity, and potential labels are presented one by one, at which time they are added to the lists of certain vertices; the decision of which of these vertices are actually to be ranked with that label must be made immediately.
\end{abstract}
\section{Introduction}
\label{sec:intro}
We consider a special type of proper vertex coloring using positive integers, called ``ranking.''  As with proper colorings, there exist variations on the original ranking problem.  In this paper we consider the list ranking problem, first posed by Jamison in 2003 \cite{J}.
\begin{definition}
Let $G$ be a finite simple graph, and let $f:V(G)\rightarrow\mathbb{N}$.  An \emph{$f$-ranking} $\alpha$ of $G$ labels each $v\in V(G)$ with an element of $\{1,\ldots,f(v)\}$ in such a way that if $u\neq v$ but $\alpha(u)=\alpha(v)$, then every $u,v$-path contains a vertex $w$ satisfying $\alpha(w)>\alpha(u)$ (equivalently, every path contains a unique vertex with largest label).  For a positive integer $k$, a \emph{$k$-ranking} of $G$ is an $f$-ranking for the constant function $f=k$.  The \emph{ranking number} of $G$, denoted here by $\rho(G)$ (though in the literature often as $\chi _r(G)$), is the minimum $k$ such that $G$ has a $k$-ranking. 
\end{definition}
Vertex rankings of graphs were introduced in \cite{IRV}, and results through 2003 are surveyed in \cite{J}.  Their study was motivated by applications to VLSI layout, cellular networks, Cholesky factorization, parallel processing, and computational geometry.  Vertex rankings are sometimes called ordered colorings, and the ranking number of a graph is trivially equal to its ``tree-depth,'' a term introduced by Ne\u{s}et\u{r}il and Ossona de Mendez in 2006 \cite{NO} in developing their theory of graph classes having bounded expansion.

In general, rankings are used to design efficient divide-and-conquer strategies for minimizing the time needed to perform interrelated tasks in parallel \cite{IRV1}.  The most basic example concerns a complex product being assembled in stages from its individual parts, where each stage of construction consists of individual parts being attached to previously assembled components in such a way that no component ever has more than one new part.  Here, the complex product is represented by the graph $G$ whose vertices are the individual parts and whose edges are the connections between those parts; assuming all parts require the same amount of to be installed, the fewest number of stages needed to complete construction is $\rho(G)$, achieved by finding some $\rho(G)$-ranking $\alpha$ of $G$ and installing each part $v$ in stage $\alpha (v)$.  Similarly, rankings can be use to optimize the disassembly of a product into parts, where each stage of deconstruction consists of individual parts being detached from remaining components in such a way that no component loses multiple parts at the same time; $G$ can be disassembled in $\rho (G)$ stages by removing each part $v$ in stage $\rho(G)-\alpha(v)+1$.  

The vertex ranking problem has spawned multiple variations, including edge ranking \cite{IRV2}, on-line ranking \cite{BH,M}, and list ranking, studied here.  The list ranking problem is to vertex ranking as the list coloring problem is to ordinary vertex coloring. 
\begin{definition}
A function $L$ that assigns each vertex of $G$ a finite set of positive integers is an \emph{$f$-list assignment} for $G$ if $|L(v)|=f(v)$ for each $v\in V(G)$.  If $|L(v)|=k$ for all $v$, then $L$ is \emph{$k$-uniform}.  An \emph{$L$-ranking} of $G$ is a ranking $\alpha$ such that $\alpha(v)\in L(v)$ for each $v$.  Say that $G$ is \emph{$f$-list rankable} if $G$ has an $L$-ranking whenever $L$ is an $f$-list assignment for $G$, and say that $G$ is \emph{$k$-list rankable} if $G$ is \emph{$f$-list rankable} for $f=k$.  Let the \emph{list ranking number} of $G$, denoted $\rho _{\ell}(G)$, be the least $k$ such that $G$ is $k$-list rankable.
\end{definition}
Note that $G$ is $f$-rankable if $G$ is $f$-list rankable, since an $f$-ranking is just an $L$-ranking if $L$ is the $f$-list assignment defined by $L(v)=\{1,\ldots,f(v)\}$ for each $v\in V(G)$.  In terms of our application concerning a product made from parts, list ranking corresponds to determining the feasability of (dis)assembly when predetermined scheduling restraints limit the times at which each individual part can be (dis)assembled.  The complexity of finding rankings from a given list assignment was considered by Dereniowski in 2008 \cite{D}.

In Section \ref{sec:games} we introduce three on-line versions of list ranking, which relate to list ranking similarly to the way on-line list coloring (also known as paintability) relates to list coloring \cite{S}.  We define these on-line versions of list ranking as games between adversaries Taxer and Ranker.  At the beginning of the game each vertex is assigned a size for its list of potential labels, but no actual labels.  Taxer in effect fills out these lists in real time by iterating through the possible labels one by one, stipulating which vertices have the given label in their lists (the order in which Taxer iterates through the labels depends on the version of the game, and once a label comes up it cannot be revisited).  Ranker must decide immediately which of those vertices just selected by Taxer are to receive the given label, extending a partial ranking of the graph (Taxer is allowed to use knowledge of the partial ranking already created by Ranker when deciding which vertices are to have a given label in their lists).  Ranker wins by creating a ranking of the graph before any vertex has its list filled with unused labels, and Taxer wins otherwise.  

Complete descriptions of the on-line ranking games are postponed until Section \ref{sec:games}, though we do introduce relevant terminology and notation here.  Given a graph $G$ and function $f:V(G)\rightarrow\mathbb{N}$, we say $G$ is \emph{on-line $f$-list rankable} if Ranker has a winning strategy for the game $R^{\pm}(G,f)$, and we define the \emph{on-line list ranking number} of $G$, denoted $\rho ^{\pm}_{\ell}(G)$, to be the least $k$ such that $G$ is on-line $f$-list rankable for the constant function $f=k$.  We similarly define the \emph{on-line list low-ranking number} $\rho ^-_{\ell}(G)$ and \emph{on-line list high-ranking number} $\rho ^+_{\ell}(G)$ based on the games $R^-(G,f)$ and $R^+(G,f)$.

Just as we have seen that finding an $f$-ranking of $G$ is equivalent to finding an $L$-ranking of $G$ for a special $f$-list assignment $L$, we will see that any $f$-list assignment can be modeled by special strategies for Taxer in the games $R^-(G,f)$ and $R^+(G,f)$, and these games in turn can be modeled by special Taxer strategies in $R^{\pm}(G,f)$.  Thus our parameters will satisfy $\rho(G)\leq \rho _{\ell}(G)\leq \min\{\rho ^-_{\ell}(G),\rho ^+_{\ell}(G)\}\leq\max\{\rho ^-_{\ell}(G),\rho ^+_{\ell}(G)\}\leq \rho ^{\pm}_{\ell}(G)$.

In Section \ref{sec:prelim} we investigate how ranking a graph relates to ranking its minors in our various versions of the ranking problem.

In Sections \ref{sec:paths} and \ref{sec:cycles} we consider paths and cycles.  As stated in \cite{BDJKKMT}, $\rho(P_n)=\left\lceil \log (n+1)\right\rceil$: the largest label can appear only once, so rankings are achieved by individually ranking the subpaths on either side of the vertex receiving the largest label.  For instance, $P_7$ can be ranked by labeling vertices from left to right with $1,2,1,3,1,2,1$.  Furthermore, $\rho(C_n)=1+\left\lceil \log n\right\rceil$, as stated in \cite{BH}: the largest label can appear only once, so rankings are achieved by ranking the copy of $P_{n-1}$ left by deleting the vertex receiving the largest label.

The first main result of this paper is proved in Section \ref{sec:paths}.
\begin{thm}\label{IntroPath}
$\rho ^{\pm}_{\ell}(P_n)=\rho(P_n)$.
\end{thm}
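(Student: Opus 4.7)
The lower bound $\rho(P_n)\le\rho^\pm_\ell(P_n)$ is immediate from the inequality chain stated in Section~\ref{sec:intro}, so the real content is to prove $\rho^\pm_\ell(P_n)\le k$ for $k=\lceil\log(n+1)\rceil$ by giving Ranker a winning strategy in $R^\pm(P_n,k)$. My plan is induction on $n$. The base case $n=1$ is trivial: Ranker assigns the first label Taxer offers. For the inductive step, since $n\le 2^k-1$, I pick a vertex $v^*$ whose removal splits $P_n$ into sub-paths $P_a$ and $P_b$ with $\max(a,b)\le 2^{k-1}-1$; the induction hypothesis then gives Ranker winning strategies $\sigma_a$ and $\sigma_b$ in $R^\pm(P_a,k-1)$ and $R^\pm(P_b,k-1)$, respectively.

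The intended strategy on $P_n$ runs $\sigma_a$ and $\sigma_b$ in parallel on the two sub-paths while keeping $v^*$ in reserve. Each sub-path vertex enjoys one unit of ``slack'' ($k$ real capacity against $k-1$ needed by its sub-strategy), and this slack is used to absorb certain offers, hiding them from the sub-games, so that Ranker can reserve a label for $v^*$ that strictly exceeds every label used on $P_a\cup P_b$. Concretely, when Taxer offers $t$ to a set $S$, Ranker forwards the $S\cap V(P_a)$ and $S\cap V(P_b)$ portions of the offer to $\sigma_a$ and $\sigma_b$ in most rounds; but when $t$ is a new ``record'' dominating every label currently in play in the sub-games, Ranker absorbs it at every sub-path vertex in $S$ and commits $v^*=t$ whenever $v^*\in S$.

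If the scheme goes through, the final labeling is a ranking: same-label pairs lying wholly inside $P_a$ or $P_b$ are dealt with by the validity of $\sigma_a$ or $\sigma_b$; any cross-sub-path same-label pair has the strictly greater label $\alpha(v^*)$ on the unique $P_n$-path through $v^*$; and $\alpha(v^*)$ itself appears only once since it is a strict global maximum.

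The main obstacle is verifying two budget constraints at once: that no sub-path vertex is asked to absorb more than its single unit of slack, and that $v^*$ actually receives a dominating label before its own capacity $k$ is exhausted. I expect to handle both with a single accounting argument, exploiting the fact that each label can be presented by Taxer only once, so the sequence of records Ranker cares about is strictly increasing and finite, combined with a carefully chosen notion of ``in play'' that forces Ranker to absorb only when the slack can pay for it and ensures that $v^*$'s $k$ capacity slots outlast the stabilization of the running maximum. Showing that Taxer cannot simultaneously force many absorptions at a single sub-path vertex and also withhold a dominating offer from $v^*$ without first running some other vertex's budget into the ground is the central technical step I anticipate.
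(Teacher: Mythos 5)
There is a genuine gap, and it is exactly the step you defer at the end: the ``single accounting argument'' is not a technicality but the entire content of the proof, and the scheme as set up cannot be completed. Two concrete obstructions. First, your parallel composition of $\sigma_a$ and $\sigma_b$ is illegal in high rounds: in $R^{\pm}$ a high round lets Ranker delete at most one vertex \emph{per component}, and until $v^*$ is actually deleted, $P_a$ and $P_b$ lie in a single component of the current graph. So whenever Taxer declares a high round and takes tokens on both sides, at most one of the two sub-strategies can have its demanded deletion honored; the other side's offered vertices silently lose a token, and each vertex has only one unit of slack ($k$ versus $k-1$) to absorb this, so Taxer wins by repeating the play. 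Second, the plan to reserve $v^*$ for a ``record'' label misreads the label structure of $R^{\pm}$: Taxer does not offer arbitrary values, and the label of a high round exceeds every label used in all later rounds. Hence for $\alpha(v^*)$ to dominate $P_a\cup P_b$, $v^*$ must be deleted in the first high round in which anything is deleted --- but Taxer controls $T$ and can simply withhold $v^*$ while repeatedly offering, say, the two neighbors of $v^*$ in high rounds. Ranker must either assign a high label on a subpath (after which $v^*$ can never dominate) or absorb, and absorption is affordable only once per vertex.

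The paper escapes both problems by strengthening the induction hypothesis to a weighted statement (Theorem \ref{Path}): Ranker wins $R^{\pm}(P_n,f)$ whenever $\sigma_f(V(P_n))=\sum_v 2^{-f(v)}<1$, which yields the theorem since $n\cdot 2^{-\lceil\log(n+1)\rceil}<1$. No vertex is reserved in advance. In a low round Ranker deletes the heavier (in $\sigma_f$-weight) of the odd-indexed and even-indexed parts of $T$, so the potential does not increase (Lemma \ref{UpPath}); in a high round Ranker deletes exactly \emph{one} vertex, chosen adaptively as the first vertex of $T$ in a precomputed ordering $v_{p_1},\ldots,v_{p_n}$ in which every prefix induces a subpath and, for every candidate $v_{p_i}$, both sides of $v_{p_i}$ have potential below $1$ even after all later vertices in the order lose a token. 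That single deletion is what splits the path into two genuinely separate components, and only then do the two sides become independent games. If you want to salvage a splitter-based argument, you must let the splitter be chosen as a function of Taxer's first high-round set $T$ rather than fixed beforehand, and you will need a finer-than-one-unit budget, which is precisely what the weight $2^{-f(v)}$ provides.
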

This theorem is proved using a more general result.  For a nonnegative integer valued function $f$ whose domain includes some finite set $V$ of vertices, define $\sigma_f(V)=\sum_{v\in V}2^{-f(v)}$; we prove that $P_n$ is on-line $f$-list rankable if $\sigma_f(V(P_n))<1$.  The result is sharp, because $P_{2^k}$ is not even $k$-rankable; the construction extends to all $n$.  On the other hand, for $n\geq 5$ there are functions $f$ satisfying $\sigma_f(V(P_n))>1$ such that $P_n$ is on-line $f$-list rankable.

The second main result of this paper is proved in Section \ref{sec:cycles} and relies heavily on the work of the previous section.
\begin{thm}\label{IntroCycle}
$\rho ^{\pm}_{\ell}(C_n)=\rho(C_n)$.
\end{thm}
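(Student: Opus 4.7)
The lower bound $\rho^{\pm}_\ell(C_n) \geq \rho(C_n)$ is immediate from the chain of inequalities established in the introduction, so the task is to exhibit a winning Ranker strategy in $R^{\pm}(C_n, k)$ with $k = \rho(C_n) = 1 + \lceil \log n \rceil$.

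My plan is to reduce the problem to the path game. Fix a vertex $v^* \in V(C_n)$ and let $P = C_n - v^*$, a path on $n-1$ vertices whose endpoints $u_1, u_2$ are the two neighbors of $v^*$ in $C_n$. With constant list size $k-1$ on $P$,
\[
\sigma_f(V(P)) \;=\; (n-1)\cdot 2^{-(k-1)} \;=\; \frac{n-1}{2^{\lceil \log n \rceil}} \;<\; 1,
\]
so by the main path theorem of Section~\ref{sec:paths}, Ranker has a winning strategy $\mathcal{S}_P$ for $R^{\pm}(P, k-1)$. In the $R^{\pm}(C_n, k)$ game each $u \in V(P)$ therefore has one ``spare'' slot in its real $k$-sized list beyond what $\mathcal{S}_P$ consumes, and $v^*$ has a full fresh list of size $k$ to spend on a single label.

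Ranker aims for a final labeling $\alpha$ in which $\alpha(v^*) > \alpha(u)$ for every $u \in V(P)$. Once that is arranged, the combined labeling is automatically a valid ranking of $C_n$: for any two $P$-vertices sharing a label, the arc of $C_n$ through $P$ already carries a larger label (by $\mathcal{S}_P$), and the arc of $C_n$ through $v^*$ carries the strictly largest label $\alpha(v^*)$. Ranker implements this by simulating $\mathcal{S}_P$ on $P$ (feeding the first $k-1$ distinct labels each $u \in V(P)$ receives into the simulation, and rejecting any $k$-th offer on $u$ into its spare slot) while postponing commitment on $v^*$: Ranker rejects every label offered to $v^*$ until one is large enough to guarantee it exceeds every label $\mathcal{S}_P$ will ultimately place on $V(P)$, and then accepts it.

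The main obstacle is proving that this $v^*$-policy cannot deadlock --- that Taxer cannot fill $v^*$'s list with $k$ rejected labels before any sufficiently large label appears. The resource I plan to exploit is the extra slack $\sigma_f(V(P)) \leq (n-1)/(2n) < 1/2$, which is well below the $\sigma_f < 1$ threshold needed by the path theorem; this slack suggests $\mathcal{S}_P$ never needs to use all $k-1$ slots on many $P$-vertices at once, so Ranker can, at need, force $\mathcal{S}_P$ to commit $V(P)$'s labels ahead of $v^*$. I expect the analysis to proceed by an amortized counting argument showing that each round forcing a rejection at $v^*$ pays for an equivalent round of $\mathcal{S}_P$-progress on $V(P)$, and by handling the corner case in which Ranker is obliged to commit on $v^*$ mid-simulation --- likely by re-running $\mathcal{S}_P$ against a modified list assignment that suppresses every label at or above $\alpha(v^*)$.
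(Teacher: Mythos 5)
There is a genuine gap, and it sits exactly where you defer the work: the claim that Ranker can force $\alpha(v^*)>\alpha(u)$ for every $u\in V(P)$. In $R^{\pm}(C_n,f)$ the numerical value of a label is determined by the round type and order (the $i'$th low round is label $i'$, the $i''$th high round is label $j+1-i''$), and Taxer decides both the round types and which vertices are taxed. Taxer can simply tax your pre-designated $v^*$ only in low rounds, $k$ times in the first $k$ rounds; Ranker must then remove $v^*$ in some early low round or lose, so $v^*$ ends up with one of the \emph{smallest} labels while most of $P$ is still unlabeled. No amortized argument can rescue a policy of ``reject every label at $v^*$ until a sufficiently large one appears,'' because no sufficiently large one need ever be offered. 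A secondary but real error: your slack computation $\sigma_f(V(P))=(n-1)2^{-\lceil\log n\rceil}\leq (n-1)/(2n)<1/2$ is wrong, since $2^{\lceil\log n\rceil}$ can equal $n$ (e.g.\ $n=8$, $k=4$ gives $\sigma=7/8$), so the ``well below threshold'' resource you plan to exploit does not exist in general.

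The paper avoids fixing $v^*$ in advance. While Taxer declares low rounds, the cycle stays a cycle: Ranker removes the heavier of the odd- or even-indexed taxed vertices and inducts on the cycle length, preserving the invariant $\sigma_f(V(C_m))<1/2+2^{-\lceil\log m\rceil}$ (Theorem \ref{Cycle}). The cycle is only broken into a path at the \emph{first high round}, and the vertex deleted there — which is the one receiving the globally largest label — is chosen adaptively as a function of Taxer's taxed set $T$. The quantitative heart is Lemma \ref{CycleDown}: the potential $\tau_f$, which doubles the weight of all but the lightest vertices in a carefully chosen prefix, measures exactly the worst-case $\sigma_g$ of the residual path over all Taxer choices of $T$, and Corollary \ref{CycleDownCor} shows $\sigma_f<1/2+2^{-\lceil\log n\rceil}$ implies $\tau_f<1$, which hands the game to Theorem \ref{Path}. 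If you want to repair your write-up, the fix is to replace the fixed $v^*$ with this adaptive choice and to supply the $\tau_f$-style bookkeeping for the first high round; the fixed-vertex reduction as stated cannot work.
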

Turning our attention toward more complicated graphs, we note the following lower bound for $\rho_{\ell}(G)$.  
\begin{prop}\label{TreeBound}
If $q$ is the maximum number of leaves in a subtree $T$ of a graph $G$, then $\rho _{\ell}(G)\geq q$.
\end{prop}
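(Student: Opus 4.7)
The plan is to exhibit, for every graph $G$ containing a subtree $T$ with $q$ leaves, a $(q-1)$-uniform list assignment $L$ on $V(G)$ that admits no $L$-ranking; this immediately gives $\rho_{\ell}(G) > q - 1$.

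I would first reduce to $T$. If $\alpha$ is any $L$-ranking of $G$, then its restriction to $V(T)$ is a ranking of $T$ from the restricted lists, because every path in $T$ is also a path in $G$ and so the same-label-on-path condition is inherited. Consequently it suffices to produce a $(q-1)$-uniform list assignment $L_T$ on $V(T)$ with no $L_T$-ranking; the extension of $L_T$ to $V(G)$ is then arbitrary (assign any $(q-1)$-subset of $\mathbb{N}$ to each remaining vertex).

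Next I would design $L_T$ so as to separate the labels available to leaves from those available to internal vertices. Writing $\ell_1, \ldots, \ell_q$ for the leaves of $T$, put
\[
L_T(\ell_i)=\{q,\,q+1,\,\ldots,\,2q-2\}\ \text{for every }i,\qquad L_T(v)=\{1,\,2,\,\ldots,\,q-1\}\ \text{for each internal vertex $v$ of $T$}.
\]
Each list has $q-1$ elements, and every leaf-list value strictly exceeds every internal-list value. In any labeling $\alpha$ chosen from $L_T$, the $q$ leaves draw labels from a set of size $q-1$, so by pigeonhole there exist $\ell_i,\ell_j$ with $\alpha(\ell_i)=\alpha(\ell_j)=a\geq q$. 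The interior of the unique $\ell_i\ell_j$-path in $T$ consists entirely of internal vertices of $T$ (a leaf has degree one and cannot appear as an interior vertex of any path), and these carry labels in $\{1,\ldots,q-1\}$, each strictly less than $a$. Hence no vertex on the path has label exceeding $a$, contradicting the ranking condition. The only genuine edge case is $T=K_2$, where $q=2$, both leaves must receive the unique value $2$, and the connecting edge has empty interior; the ranking condition fails there as well.

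I do not anticipate serious obstacles. The crucial idea, placing the leaf lists strictly above the internal-vertex lists, is what guarantees that the forced pigeonhole collision among the leaves cannot be rescued by any vertex on the path between them; everything else is routine verification.
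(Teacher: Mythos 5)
Your proposal is correct and is essentially the paper's own argument: the same $(q-1)$-uniform assignment giving leaves of $T$ the list $\{q,\ldots,2q-2\}$ and the other vertices lists contained in $\{1,\ldots,q-1\}$, followed by pigeonhole on the leaves and the observation that the interior of the path between two equal-labeled leaves through $T$ carries only smaller labels. The only cosmetic difference is that you restrict to $T$ first and extend the lists arbitrarily off $T$, whereas the paper assigns $\{1,\ldots,q-1\}$ to every non-leaf vertex of $G$ directly; both work.
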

\begin{proof}
Construct a $(q-1)$-uniform list assignment $L$ by giving each vertex that is not a leaf of $T$ the list $\{1,\ldots,q-1\}$ and each leaf of $T$ the list $\left\{ q,\ldots ,2q-2\right\}$.  If $G$ has an $L$-ranking, then two leaves $u$ and $v$ of $T$ must receive the same label, but no interior vertex of the $u,v$-path through $T$ can contain a larger label.
\end{proof}
Thus $\rho _{\ell}(G)-\rho(G)$ can be made arbitrarily large, since $\rho(K_{1,n})=2$ (label the leaves with $1$ and the center with $2$) but $\rho _{\ell}(K_{1,n})\geq n$.  Section ~\ref{sec:trees} finds a class of trees for which the bound of Proposition \ref{TreeBound} is equality, yielding our third main result.
\begin{thm}\label{IntroTree}
For any positive integer $p$, there is a positive integer $q_p$ such that for any tree $T$ with $p$ internal vertices and at least $q_p$ leaves, $\rho _{\ell}(T)$ equals the number of leaves of $T$.
\end{thm}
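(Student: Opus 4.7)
The lower bound $\rho_\ell(T)\geq q$ follows immediately from Proposition~\ref{TreeBound} with $T$ itself as the subtree, so the task is to prove, for some constant $q_p$, that every $q$-uniform list assignment $L$ on such a $T$ (with $q\geq q_p$) admits an $L$-ranking. Let $T'$ denote the subtree of $T$ induced by its $p$ internal vertices, and for each leaf $\ell$ let $p(\ell)$ be its unique neighbor in $T$. The plan is to label the internal vertices with distinct ``high'' labels from their lists and the leaves with ``low'' labels, so that every path between two same-labeled vertices in $T$ passes through an internal vertex of strictly larger label; because the internal labels are chosen distinct, the ranking condition inside $T'$ is vacuously satisfied, and this automatically handles every leaf-to-leaf path.

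Set $M=\max_{\ell}\min L(\ell)$, so that every leaf has at least one list element in $[1,M]$. I first try to find distinct $\alpha(v_i)\in L(v_i)\cap(M,\infty)$ via Hall's theorem on the truncated lists; if this system of distinct representatives exists, labeling each leaf $\ell$ by $\min L(\ell)$ completes a valid ranking. The main obstacle is the case in which Hall's condition for the truncated lists fails on some $S\subseteq V(T')$. A short double-counting argument (the size-$q$ list of each $v\in S$ restricted to $[1,M]$ can contain at most $M$ distinct values) then forces $M\geq q-|S|+1\geq q-p+1$; in particular the leaf $\ell^{\ast}$ realizing $M$ has its entire list above $q-p$. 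In this regime I would reverse strategies: apply Hall to the leaf lists—automatic since each leaf has $q$ choices and there are $q$ leaves—to give the leaves distinct labels, and then rank the internal vertices using labels in the ``low'' range $[1,q-p]$, where the internal-vertex lists are concentrated and contain enough distinct values to produce a ranking of $T'$ of depth at most $p$.

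The most delicate subcase will be the intermediate regime, in which some leaves have high lists while others have low ones, or the high-SDR Hall condition fails only on part of $V(T')$. I would handle this by a hybrid argument, partitioning $V(T)$ by list-concentration and combining the two strategies above, perhaps by recursing on $T'$ after removing a separator vertex assigned the globally largest chosen label. Choosing $q_p$ large enough (polynomial in $p$, possibly of order $p^2$) to make every Hall-type inequality hold in the worst partition is the step I expect to demand the most care, and it is what ultimately determines $q_p$.
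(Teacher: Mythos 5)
Your lower bound and your first strategy are fine: if the internal vertices can be given distinct labels all exceeding $M=\max_{\ell}\min L(\ell)$, then labeling each leaf with $\min L(\ell)$ does yield a ranking, and Hall's condition for the truncated lists holds automatically whenever $M\leq q-p$. The gaps are in what you do when this fails. First, the failure of Hall only tells you that the lists of the vertices in $S$ are concentrated in $[1,M]$ with $M\geq q-p+1$; since labels are arbitrary positive integers, $M$ can be astronomically larger than $q$, so your claim that the internal-vertex lists are ``concentrated in the low range $[1,q-p]$'' does not follow, and there is no reason those lists contain any values below $q-p$ at all. Second, and more fundamentally, your reversed strategy (leaves get $q$ distinct labels, internal vertices are ranked afterwards) never addresses leaf-versus-internal conflicts: an internal vertex whose list is exactly the set of labels placed on the leaves must repeat some leaf's label, and then you must certify that the path joining them contains a strictly larger label. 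Whether that is possible depends on \emph{which} leaf received that label and where it sits in the tree, so the assignment of labels to leaves cannot be made by a bare Hall argument; it has to be coordinated with the tree structure. The ``hybrid argument'' you defer to the intermediate regime is not a loose end --- it is essentially the entire content of the theorem.

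For comparison, the paper's proof never tries to put all internal labels above all leaf labels. It fixes, for each internal vertex $u$, the component $T_u$ of $T-u$ with the most leaves, and calls $u$ \emph{special} if $T_u$ captures at least $q/2$ leaves of $T$ and one can choose $p$ of those leaves with list elements $e_1<\cdots<e_p<m_u$, where $m_u=\max L(u)$. If no vertex is special, all $p+q$ vertices can be given pairwise distinct labels (the non-specialness bounds how much of each internal list is consumed by the leaf labels); if some $u$ is special, it is labeled $m_u$ and acts as a separator behind which the argument recurses on $T_u$ with $p$ leaves pre-labeled by $e_1,\ldots,e_p$. That recursion is what forces the paper's exponential threshold $q\geq 2^{p+2}-2p-4$; your guess that $q_p$ of order $p^2$ suffices may well be true (the paper conjectures even $q>p$ suffices) but nothing in your outline delivers it.
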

The proof Theorem \ref{IntroTree} gives a specific value of $q_p$ that is exponential in $p$, but this does not appear to be anywhere near sharp.
\begin{con}
If $T$ is a tree with $p$ internal vertices and $q$ leaves, and $p<q$, then $\rho _{\ell}(T)=q$.
\end{con}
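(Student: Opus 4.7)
The lower bound $\rho_\ell(T)\ge q$ is immediate from Proposition~\ref{TreeBound}, taking $T$ itself as the subtree; what requires proof is the matching upper bound, namely that every $q$-uniform list assignment $L$ admits an $L$-ranking.

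I would attempt a proof by strong induction on $p+q$. The base case $p=0$ forces $T=K_2$ and $q=2$, which is trivially $2$-list rankable. For the inductive step, the hypothesis $p<q$ already yields a useful structural fact: since each leaf contributes one edge to an (necessarily internal) vertex, if each of the $p$ internal vertices had at most one leaf-neighbor then $q\le p$, contradicting $p<q$. Hence some internal vertex $v$ has at least two leaf-neighbors. My plan is to exploit such a leaf-heavy $v$ by selecting a label $\ell\in L(v)$ that exceeds the labels we intend to give to $v$'s leaf-neighbors, so that labeling $v$ with $\ell$ first reduces the leaf-neighborhood to independent one-vertex subproblems, and then reduces $T-v$ (with each remaining list shrunk by at most one) to smaller trees to which the induction applies. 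The leaf-assignment step should be handled by a Hall-type matching argument on the lists restricted to labels dominated by $\ell$.

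The principal obstacle, and plausibly the reason this remains open, is that the hypothesis $p<q$ is fragile under the natural reductions: deleting an internal vertex can convert its remaining internal neighbors into leaves, shifting the balance in either direction, and individual components of $T-v$ may violate $p'<q'$ even when the whole tree satisfies it. Meanwhile, the lists shrink by at most one per used label, providing only a thin margin for the Hall-type assignment on the leaf cluster. A successful proof will probably need a sharper global invariant — perhaps a weighted count giving extra credit to internal vertices of high branching or high leaf-degree — or a two-phase strategy that first commits all internal vertices to labels drawn from the tops of their lists (so that any path between equally labeled leaves automatically sees a larger label somewhere above) and then matches leaves to the complementary portions of their lists via a flow or Hall-theorem argument, rather than a greedy vertex-by-vertex recursion.
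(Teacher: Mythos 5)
You are addressing a statement that the paper itself leaves as a \emph{conjecture}: the paper proves only the much weaker Theorem~\ref{Tree}, which requires $q\geq 2^{p+2}-2p-4$ rather than merely $q>p$, so there is no proof in the paper to compare against, and your write-up --- by your own admission --- is a plan with identified obstacles rather than a proof. The lower-bound paragraph is fine (Proposition~\ref{TreeBound} applied to $T$ itself, together with the fact that a subtree of $T$ has at most as many leaves as $T$). Everything after that is a sketch that does not close.

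The first concrete failure point is the step where you label a leaf-heavy internal vertex $v$ with some $\ell\in L(v)$ exceeding the labels to be given to its leaf-neighbors: nothing guarantees such an $\ell$ exists. Take $L(u)=\{1,\ldots,q\}$ for every internal vertex $u$ and $L(x)=\{q+1,\ldots,2q\}$ for every leaf $x$. Every label available to any leaf exceeds every label available to $v$, so your reduction never gets started; this instance is easily rankable (the two label pools are disjoint, so an injective choice works), but not by the strategy you describe, and you supply no alternative branch. The second failure point is the one you diagnose yourself: the components of $T-v$ need not satisfy $p'<q'$, and after labels are consumed the residual lists are no longer $q'$-uniform, so the inductive hypothesis as stated simply does not apply to the subproblems. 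Repairing this requires an $f$-list-assignment strengthening of the conjecture together with an invariant that survives the deletion of $v$ and the shrinking of lists, and you name no candidate invariant. Note that the paper's Theorem~\ref{Tree} sidesteps exactly these two difficulties by demanding exponentially many leaves, which is what makes the counting in its ``special vertex'' case analysis work; your sketch correctly explains why $p<q$ is a thin margin, but identifying the obstruction is not the same as overcoming it, and as written the proposal does not establish the conjecture.
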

It also seems likely that Theorem \ref{IntroTree} can be strengthened in another way.
\begin{con}
For any positive integer $p$, there is a positive integer $q_p$ such that for any tree $T$ with $p$ internal vertices and at least $q_p$ leaves, $\rho^{\pm}_{\ell}(T)$ equals the number of leaves of $T$.
\end{con}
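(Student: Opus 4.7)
The lower bound $\rho^{\pm}_{\ell}(T)\ge q$ is immediate: by Proposition~\ref{TreeBound} and the chain of inequalities $\rho(G)\le \rho_{\ell}(G)\le\rho^{\pm}_{\ell}(G)$ displayed in the introduction, any tree $T$ with $q$ leaves satisfies $\rho^{\pm}_{\ell}(T)\ge\rho_{\ell}(T)\ge q$. So the entire burden is the matching upper bound, i.e.\ producing a winning strategy for Ranker in $R^{\pm}(T,q)$ when the number of leaves is at least $q_p$.

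The plan is to mimic the inductive skeleton presumably used to prove Theorem~\ref{IntroTree} (induction on $p$), replacing each ``find a ranking from a given list'' step by a corresponding on-line strategy. The base case $p=1$ is the star $K_{1,n}$. Here a natural Ranker strategy is to classify the labels Taxer introduces into ``small'' labels, which may safely be repeated across leaves (Ranker uses each such label on an arbitrary leaf whose current list contains it and which has not yet been labeled), and a single ``top'' label reserved for the center. The center's list must eventually contain $n$ labels, so Ranker can wait until a sufficiently large label lands in the center's list and commit then; choosing the threshold correctly and bounding the number of leaves ``wasted'' while waiting should work provided $n$ exceeds a threshold $q_1$.

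For the inductive step, pick a leaf $v$ of the internal tree $T'$, so that $v$ is an internal vertex of $T$ all of whose neighbors except one, say $v'$, are leaves of $T$. The induced substar on $v$ and its pendant leaves is handled by the base-case strategy, except that the ``top'' label assigned to $v$ must also dominate any repeats among leaves of $T\setminus v$ whose paths to the substar pass through $v$. Ranker runs the base-case strategy on the substar in parallel with the inductive strategy on $T\setminus\{v \text{ and its pendant leaves}\}$, using the substar's large reserve of leaves to absorb any label that must be withheld for consistency with the rest of the tree. Crucially, by choosing $q_p$ large enough relative to $q_{p-1}$, Ranker can afford to ``waste'' any label Taxer introduces that does not fit cleanly into either subproblem.

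The main obstacle, and the reason this remains a conjecture, is that in the on-line setting Ranker cannot reserve in advance a specific large label for any internal vertex: Taxer controls the order in which labels appear and which lists they enter, and Ranker must commit immediately. The base-case analysis for $K_{1,n}$ therefore requires a careful potential-function or charging argument tracking, for the center and for each leaf, how much ``room'' remains in its list against how many labels have been committed; only with enough leaves can Ranker guarantee that some safe choice for the center always exists before the center's list is exhausted. Extending such a potential argument through the induction, and keeping $q_p$ finite (even if exponential or worse in $p$), is where I expect the technical difficulty to lie.
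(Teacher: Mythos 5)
The statement you are addressing is stated in the paper as a \emph{conjecture}, not a theorem: the paper proves the corresponding result only for the offline parameter $\rho_{\ell}(T)$ (Theorem \ref{IntroTree}/\ref{Tree}) and explicitly leaves the on-line version open. So there is no paper proof to compare against, and your proposal does not close the gap either --- as you yourself concede in your final paragraph. Your lower bound is fine ($\rho^{\pm}_{\ell}(T)\geq\rho_{\ell}(T)\geq q$ via Proposition \ref{TreeBound} and Proposition \ref{Comparison}), but everything after that is a plan rather than an argument, and the plan has concrete unresolved obstacles beyond the one you name. First, even your base case is not established: the paper proves only $\rho^{+}_{\ell}(K_{1,n})=n$ (Proposition \ref{Star}), i.e.\ Ranker wins when every round is high; in $R^{\pm}$ Taxer may declare low rounds, where removing the center turns the remaining leaves into a clique, and may interleave high and low rounds so that ``reserve a top label for the center'' is not a well-defined strategy --- the label introduced in a high round is the largest \emph{unused} label, and Ranker must commit immediately without knowing how many low rounds will follow. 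Second, your inductive step's ``run the two strategies in parallel'' does not respect the rules of the game: Taxer's set $T_i$ may straddle the substar and the rest of the tree, and the constraint on $R_i$ (an independent set in a low round, at most one vertex per component in a high round) couples the two subgames as long as they are connected through $v$; in particular, in a high round Ranker cannot remove one vertex from each part. Third, the paper's offline proof is not the induction-on-$p$-with-star-base-case you assume; it is a global case analysis on whether the full list assignment admits a ``special vertex,'' and it repeatedly uses knowledge of all lists at once (e.g.\ labeling all leaves first with distinct labels, choosing a leaf to receive a label exceeding $m_v$). That information is exactly what Ranker lacks on-line, which is why the on-line statement remains a conjecture. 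Your proposal correctly identifies where the difficulty lies, but it should be presented as a research plan, not a proof.
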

\section{On-line list ranking games}
\label{sec:games}
In this section we introduce the various on-line versions of list ranking, presented as games between Taxer and Ranker.  Let $G$ be a graph and $f:V(G)\rightarrow\mathbb{N}$.  Each $v\in V(G)$ starts the game possessing $f(v)$ tokens (representing the open slots in the list of labels available to $v$), and each round of the game corresponds to a label to be used by Ranker to rank $G$.  Taxer starts the round corresponding to the label $c$ by taking tokens from a set $T$ of unranked vertices in $G$ (in effect inserting $c$ into the list of each vertex in $T$).  Ranker responds by removing some subset $R$ of $T$ from $G$ and potentially adding certain edges to the resulting graph (in effect labeling vertices in $R$ with $c$, with restrictions placed on $R$ according to $c$, and the set of edges added determined by $c$ and $R$).  Taxer's goal is to bankrupt some vertex without Ranker being able to assign it any of its possible labels.

The three on-line list ranking games we introduce differ in the order the potential labels are introduced.  We start with the game in which the labels are introduced in increasing order.  In terms of the application of list ranking given in Section \ref{sec:intro} concerning a product made from parts, this variation corresponds to determining the feasability of assembly when the list of individual parts that can be attached at each stage of construction is not known until that stage is reached.
\begin{definition}
$G$ is \emph{on-line $f$-list low-rankable} if Ranker has a winning strategy over Taxer in the following game $R^-(G,f)$, which starts by setting $G_1=G$ and allotting $f(v)$ tokens to each vertex $v$.  During round $i$ for $i\geq 1$, Taxer begins play by taking a single token from each element of a nonempty set $T_i$ of vertices of $G_i$.  Ranker responds by creating the graph $G_{i+1}$ from $G_i$ by first selecting a subset $R_i$ of $T_i$ that is independent in $G_i$, then adding an edge between any nonadjacent vertices that have a common neighbor in $R_i$, and finally deleting $R_i$.  Taxer wins after round $i$ if $G_{i+1}$ contains any vertex without a token; Ranker wins after round $i$ if $G_{i+1}$ is empty.  Say $G$ is \emph{on-line $k$-list low-rankable} if $G$ is on-line $f$-list low-rankable for the constant function $f=k$.  Let the \emph{on-line list low-ranking number} of $G$, denoted $\rho ^-_{\ell}(G)$, be the least $k$ such that $G$ is on-line $k$-list low-rankable.  
\end{definition}
Note that if Ranker wins the game after round $j$, then $G$ can be given a $j$-ranking by labeling each vertex in $R_i$ with $i$: if $u,v\in R_i$ then $u$ and $v$ are not adjacent in $G_i$, so each $u,v$-path in $G$ has an interior vertex $w$ that appears in $G_i$ but not $R_i$, meaning $w$ receives a larger label than do $u$ and $v$.  Further note that $G$ is $f$-list rankable if $G$ is on-line $f$-list low-rankable: finding an $L$-ranking from an $f$-list assignment $L$, whose set of labels we may presume to be precisely $\{1,\ldots,m\}$, is equivalent to finding a winning strategy for Ranker in $R^-(G,f)$ where Taxer must declare before the game that any vertex $v$ remaining in round $i$ will be put in $T_i$ if and only if $i\in L(v)$.

\begin{figure}[ht]
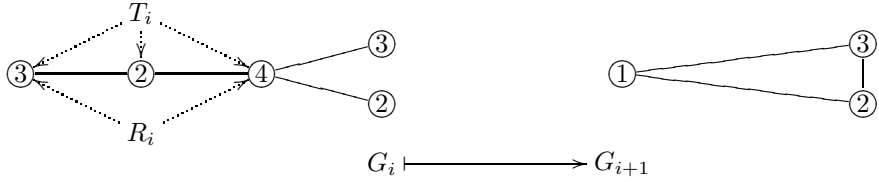

\centering
\[ \xygraph{
!{<0cm,0cm>;<1.6cm,0cm>:<0cm,.8cm>::}
!{(0,0) }*+[o][F-:<3pt>]{3}="a"
!{(1,0) }*+[o][F-:<3pt>]{2}="b"
!{(2,0) }*+[o][F-:<3pt>]{4}="c"
!{(3,-.5) }*+[o][F-:<3pt>]{2}="d"
!{(3,.5) }*+[o][F-:<3pt>]{3}="e"
!{(5,0) }*+[o][F-:<3pt>]{1}="f"
!{(7,-.5) }*+[o][F-:<3pt>]{2}="g"
!{(7,.5) }*+[o][F-:<3pt>]{3}="h"
!{(1,1) }*+{T_i}="t"
!{(1,-1) }*+{R_i}="r"
!{(3,-1.5) }*+{G_i}="p"
!{(5,-1.5) }*+{G_{i+1}}="q"
"a"-"b"-"c"-"d" "c"-"e"
"t"-@{.>}"a" "t"-@{.>}"b" "t"-@{.>}"c"
"r"-@{.>}"a" "r"-@{.>}"c"
"f"-"g"-"h"-"f"
"p"-@{|->}"q"
} \]
\caption{A possibility for round $i$ of $R^-(G,f)$ (or of $R^{\pm}(G,f)$, if the round is low), where the number on each vertex counts its remaining tokens.}
\end{figure}
We now introduce the on-line list ranking game in which the labels are introduced in decreasing order (should Ranker win after round $j$, the actual value of the first label introduced will be $j$).  In terms of our application of list ranking, this variation corresponds to determining the feasability of dissassembly when the list of individual parts that can be detached at each stage of deconstruction is not known until that stage is reached.
\begin{definition}
$G$ is \emph{on-line $f$-list high-rankable} if Ranker has a winning strategy over Taxer in the following game $R^+(G,f)$, which starts by setting $G_1=G$ and allotting $f(v)$ tokens to each vertex $v$.  During round $i$ for $i\geq 1$, Taxer begins play by taking a single token from each element of a nonempty set $T_i$ of vertices of $G_i$.  Ranker responds by creating the induced subgraph $G_{i+1}$ of $G_i$ by deleting from $G_i$ a subset $R_i$ of $T_i$ no two vertices of which lie in the same component.  Taxer wins after round $i$ if $G_{i+1}$ contains any vertex without a token; Ranker wins after round $i$ if $G_{i+1}$ is empty.  Say $G$ is \emph{on-line $k$-list high-rankable} if $G$ is on-line $f$-list high-rankable for the constant function $f=k$.  Let the \emph{on-line list high-ranking number} of $G$, denoted $\rho ^+_{\ell}(G)$, be the least $k$ such that $G$ is on-line $k$-list high-rankable.  
\end{definition}
Note that if Ranker wins the game after round $j$, then $G$ can be given a $j$-ranking by labeling each vertex in $R_i$ with $j+1-i$: if $u,v\in R_i$ then $u$ and $v$ are in different components of $G_i$, so each $u,v$-path in $G$ has an interior vertex $w$ that does not make it to $G_i$, meaning $w$ receives a larger label than do $u$ and $v$.  Further note that $G$ is $f$-list rankable if $G$ is on-line $f$-list high-rankable: finding an $L$-ranking from an $f$-list assignment $L$, whose set of labels we may presume to be precisely $\{1,\ldots,m\}$, is equivalent to finding a winning strategy for Ranker in $R^+(G,f)$ where Taxer must declare before the game that any vertex $v$ remaining in round $i$ will be put in $T_i$ if and only if $m+1-i\in L(v)$.

\begin{figure}[hb]
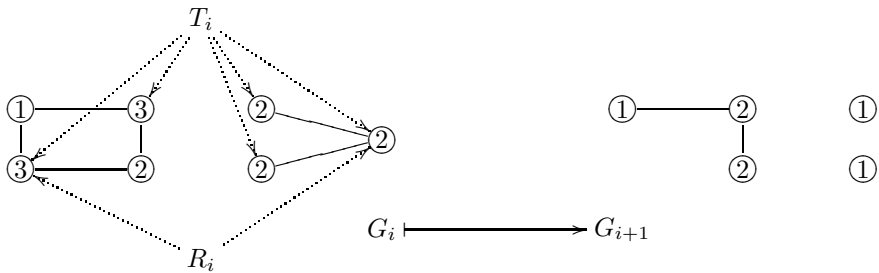

\centering
\[ \xygraph{
!{<0cm,0cm>;<1.6cm,0cm>:<0cm,.8cm>::}
!{(0,.5) }*+[o][F-:<3pt>]{1}="a"
!{(1,.5) }*+[o][F-:<3pt>]{3}="b"
!{(0,-.5) }*+[o][F-:<3pt>]{3}="x"
!{(1,-.5) }*+[o][F-:<3pt>]{2}="y"
!{(3,0) }*+[o][F-:<3pt>]{2}="c"
!{(2,-.5) }*+[o][F-:<3pt>]{2}="d"
!{(2,.5) }*+[o][F-:<3pt>]{2}="e"
!{(5,.5) }*+[o][F-:<3pt>]{1}="u"
!{(6,.5) }*+[o][F-:<3pt>]{2}="v"
!{(6,-.5) }*+[o][F-:<3pt>]{2}="f"
!{(7,-.5) }*+[o][F-:<3pt>]{1}="g"
!{(7,.5) }*+[o][F-:<3pt>]{1}="h"
!{(1.5,2) }*+{T_i}="t"
!{(1.5,-2) }*+{R_i}="r"
!{(3,-1.5) }*+{G_i}="p"
!{(5,-1.5) }*+{G_{i+1}}="q"
"a"-"b"-"y"-"x"-"a" "c"-"d" "c"-"e"
"t"-@{.>}"b" "t"-@{.>}"x" "t"-@{.>}"c" "t"-@{.>}"d" "t"-@{.>}"e"
"r"-@{.>}"x" "r"-@{.>}"c"
"u"-"v"-"f"
"p"-@{|->}"q"
} \]
\caption{A possibility for round $i$ of $R^+(G,f)$ (or of $R^{\pm}(G,f)$, if the round is high), where the number on each vertex counts its remaining tokens.}
\end{figure}
Our final on-line list ranking game is basically a mixture of the first two, in that at the beginning of each round Taxer gets to decide whether the label to be assigned to vertices that round is either the least or greatest label yet to be used.  In terms of our application of list ranking, this variation corresponds to determining the feasability of (dis)assembly when each stage consists of learning either the list of individual parts that can be (dis)assembled immediately or the list of individual parts that can be scheduled to be (dis)assembled at the same time after the (dis)assembly of all individual parts that have not yet been scheduled for future (dis)assembly.
\begin{definition}
$G$ is \emph{on-line $f$-list rankable} if Ranker has a winning strategy over Taxer in the following game $R^{\pm}(G,f)$, which starts by setting $G_1=G$ and allotting $f(v)$ tokens to each vertex $v$.  During round $i$ for $i\geq 1$, Taxer begins play by declaring the round to be either low or high; low rounds are played like rounds of $R^-(G,f)$ and high rounds are played like rounds of $R^+(G,f)$.  As in $R^-(G,f)$ and $R^+(G,f)$, Taxer wins $R^{\pm}(G,f)$ after round $i$ if $G_{i+1}$ contains any vertex without a token, and Ranker wins after round $i$ if $G_{i+1}$ is empty.  Say $G$ is \emph{on-line $k$-list rankable} if $G$ is on-line $f$-list rankable for the constant function $f=k$.  Let the \emph{on-line list ranking number} of $G$, denoted $\rho ^{\pm}_{\ell}(G)$, be the least $k$ such that $G$ is on-line $k$-list rankable.   
\end{definition}
Note that if Ranker wins the game after round $j$, then $G$ can be given a $j$-ranking by labeling each vertex in $R_i$ with $i'$ if round $i$ was the $i'$th low round and with $j+1-i''$ if round $i$ was the $i''$th high round.  Further note that $G$ is on-line $f$-list low-rankable and on-line $f$-list high-rankable if $G$ is on-line $f$-list rankable: a winning strategy for Ranker in $R^-(G,f)$ is a winning strategy for Ranker in $R^{\pm}(G,f)$ where Taxer is required to declare each round low, and a winning strategy for Ranker in $R^+(G,f)$ is a winning strategy for Ranker in $R^{\pm}(G,f)$ where Taxer is required to declare each round high.

We summarize our observations about the relationships among the parameters we have introduced.
\begin{prop}\label{Comparison}
For any graph $G$, we have $\rho (G)\leq \rho _{\ell}(G)\leq \min\{\rho ^-_{\ell}(G),\rho ^+_{\ell}(G)\}\leq\linebreak\max\{\rho ^-_{\ell}(G),\rho ^+_{\ell}(G)\}\leq \rho ^{\pm}_{\ell}(G)$.
\end{prop}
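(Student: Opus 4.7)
The plan is to assemble this chain of inequalities from the four observations already made in the text when each game was introduced; nothing genuinely new is needed.

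For the first inequality $\rho(G)\leq\rho_\ell(G)$, suppose $G$ is $k$-list rankable. Apply this to the specific $k$-uniform list assignment $L$ given by $L(v)=\{1,\ldots,k\}$ for every $v$; the resulting $L$-ranking is by definition a $k$-ranking of $G$, so $\rho(G)\leq k$.

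For the second inequality, I need to show $\rho_\ell(G)\leq\rho^-_\ell(G)$ and $\rho_\ell(G)\leq\rho^+_\ell(G)$; the rest follows from $\min\leq\max$. Let $k=\rho^-_\ell(G)$ and let $L$ be a $k$-uniform list assignment whose labels all lie in $\{1,\ldots,m\}$ for some $m$. Simulate a Taxer whose strategy in $R^-(G,k)$ is predetermined as follows: in round $i\in\{1,\ldots,m\}$, put $v\in T_i$ iff $v$ is still present and $i\in L(v)$, skipping rounds where this set is empty. Since Ranker has a winning strategy in $R^-(G,k)$, the resulting $R_i$'s form an $L$-ranking by the observation made right after the definition of $R^-$. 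An analogous construction using the Taxer rule ``put $v\in T_i$ iff $m+1-i\in L(v)$'' handles $R^+$.

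For the last inequality, suppose Ranker has a winning strategy $\mathcal{S}$ in $R^{\pm}(G,k)$. Consider Taxer's restricted strategy of always declaring ``low''; then $\mathcal{S}$ specializes to a Ranker strategy in $R^-(G,k)$, since low rounds of $R^\pm$ are played exactly like rounds of $R^-$. So $\rho^-_\ell(G)\leq k$, and symmetrically $\rho^+_\ell(G)\leq k$, giving $\max\{\rho^-_\ell(G),\rho^+_\ell(G)\}\leq\rho^{\pm}_\ell(G)$.

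There is no real obstacle here: each inequality is a direct consequence of a remark already stated when the corresponding game was defined, so the proof is essentially a consolidation. The only subtlety worth being explicit about is that the ``Taxer simulates a list assignment'' argument needs Taxer to be allowed to commit to his strategy in advance, which is always permissible since this only weakens Taxer.
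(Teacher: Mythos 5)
Your proof is correct and follows essentially the same route as the paper, which offers no separate argument for Proposition \ref{Comparison} but simply collects the four observations made immediately after the definitions of $f$-list rankability and the games $R^-$, $R^+$, and $R^{\pm}$ (the special list assignment $L(v)=\{1,\ldots,f(v)\}$, the two predetermined Taxer strategies encoding a list assignment, and the restriction of Taxer to all-low or all-high rounds in $R^{\pm}$). Your explicit handling of empty $T_i$ and of Taxer committing to a strategy in advance matches the paper's ``whose set of labels we may presume to be precisely $\{1,\ldots,m\}$'' and ``Taxer must declare before the game'' caveats.
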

Currently we have no example of a graph $G$ satisfying $\rho _{\ell}(G)<\rho ^{\pm}_{\ell}(G)$; it would be especially interesting to find a construction that could make $\rho ^{\pm}_{\ell}(G)-\rho _{\ell}(G)$ arbitrarily large.  Furthermore, we know of no $G$ and function $f$ such that $G$ is on-line $f$-list high-rankable but not on-line $f$-list rankable.  We can, however, present a graph $G$ and function $f$ such that $G$ is on-line $f$-list low-rankable but not on-line $f$-list high-rankable.  We use a lemma (which we will use several times more in later sections) that provides list ranking and on-line list ranking analogues of the following observation.  If the vertices of a graph $G$ can be labeled $v_1,\ldots,v_n$ so that for some index $k$, the subgraph $G'$ of $G$ induced by $\{v_1,\ldots,v_k\}$ is $f$-rankable, and $f(v_i)\geq i$ for $k<i\leq n$, then we can construct an $f$-ranking of $G$ by giving $G'$ a $k$-ranking (which is possible since at most $k$ labels can be used in a ranking of $G'$) and labeling $v_i$ with $i$ for $k<i\leq n$.  
\begin{lem}\label{Distinct}
Let $G$ be a graph with vertices $v_1,\ldots,v_n$, and for some index $k$ let $G'$ be the subgraph of $G$ induced by $\{v_1,\ldots,v_k\}$.  Suppose that for every component $C$ of $G-V(G')$, the set of vertices in $G'$ adjacent to vertices in $C$ is a (possibly empty) clique.  Let $f:V(G)\rightarrow\mathbb{N}$ satisfy $f(v_i)\geq i$ for $k<i\leq n$.  If $G'$ is $f$-list rankable, then so is $G$, and for $*\in\{-,+,\pm\}$, if Ranker wins $R^*(G',f)$, then Ranker also wins $R^*(G,f)$.
\end{lem}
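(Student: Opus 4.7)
The plan is to extend a ranking (or Ranker strategy) for $G'$ to one for $G$ by giving the vertices of $V(G)\setminus V(G')$ fresh distinct labels (or distinct removal rounds). The clique hypothesis on each attachment $K_C$ is what lets us shortcut any excursion of a path through a component $C$ of $G-V(G')$ by an edge inside $G'$.

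For the offline claim, given an $f$-list assignment $L$, use the $f$-list rankability of $G'$ to pick an $L|_{V(G')}$-ranking $\alpha'$ with label set $S$ of size at most $k$. Process $v_{k+1},\ldots,v_n$ in increasing index order and greedily select
\[
\ell_i\in L(v_i)\setminus\bigl(S\cup\{\ell_{k+1},\ldots,\ell_{i-1}\}\bigr),
\]
which is nonempty because $|L(v_i)|=f(v_i)\ge i$ while the excluded set has size at most $k+(i-k-1)=i-1$. Define $\alpha$ by $\alpha|_{V(G')}=\alpha'$ and $\alpha(v_i)=\ell_i$. Since the $\ell_i$ are pairwise distinct and disjoint from $S$, it suffices to check the ranking condition for $u,v\in V(G')$ with $\alpha(u)=\alpha(v)=\ell$; for any $u,v$-path $P$ in $G$, replace each maximal excursion through a component $C$ by the edge in $E(G')$ joining its entry and exit vertices (both in $K_C$), obtaining a $u,v$-path $P'$ in $G'$ with $V(P')\subseteq V(P)$; then $\alpha'$ gives a vertex of label $>\ell$ on $P'$, and that vertex lies on $P$.

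For the on-line claim, fix $*\in\{-,+,\pm\}$ and a winning Ranker strategy $\sigma'$ in $R^*(G',f)$. Ranker plays $R^*(G,f)$ by running $\sigma'$ on a simulated copy of $R^*(G',f)$, feeding it the $V(G')$-restriction of Taxer's moves. Concretely, in round $i$ Ranker passes $T_i\cap V(G_i)\cap V(G')$ (together with Taxer's low/high declaration, if $*=\pm$) into the simulation to obtain $R_i'$, and then plays $R_i=R_i'\cup S_i$ in the real game, where $S_i\subseteq T_i\setminus V(G')$ is produced by iterating over the non-$V(G')$ vertices of $T_i$ in increasing index order and committing each uncommitted $v_j$ to this round provided $R_i'\cup(\text{commitments already made for this round})\cup\{v_j\}$ remains a legal move. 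The structural observation is that $G_i|_{V(G')}$ always matches the simulated $G'$-state: for $R^+$ this is immediate, and for $R^-$ any edge added to $G_i|_{V(G')}$ by removing a common neighbor in some $S_{i'}$ would connect two vertices of a single $K_C$ and hence is already present in $G'$. Combined with the analogue of the shortcut argument applied to components of $G_i$, this ensures $R_i'$ is always a legal move in the real game, so $\sigma'$ succeeds in removing all of $V(G')$.

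The main obstacle is verifying that every $v_j\in V(G)\setminus V(G')$ is committed before exhausting its $f(v_j)\ge j$ tokens. The key bookkeeping is that each potential blocking vertex can block $v_j$ in only one round: each $V(G')$-vertex lies in $R_r'$ for exactly one $r$ during $\sigma'$'s simulation, contributing at most $k$ blocks, and each earlier non-$V(G')$ vertex $v_{j'}$ with $j'<j$ is committed to exactly one round, contributing at most $j-k-1$ more. Thus $v_j$ is blocked in at most $k+(j-k-1)=j-1$ of its offerings, so is committed by the $j$-th time Taxer offers it, comfortably within the $f(v_j)\ge j$-token budget.
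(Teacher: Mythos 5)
Your offline argument is the paper's argument almost verbatim: rank $G'$ from its lists, greedily give $v_{k+1},\ldots,v_n$ fresh distinct labels using $f(v_i)\geq i$, and use the clique hypothesis to shortcut each excursion of a path through a component of $G-V(G')$ by an edge of $G'$. For the on-line claim you take a genuinely different route. The paper inducts on $|V(G)|$, playing one round at a time and re-establishing the hypotheses of the lemma for the residual game via the shifted function $g(i)=i-|R\cap\{v_1,\ldots,v_{i-1}\}|$; in each round its Ranker either copies the $G'$-strategy's response (when Taxer touches $V(G')$) or deletes the single least-indexed outside vertex offered (when Taxer does not). You instead run the $G'$-strategy as a persistent simulation and greedily commit outside vertices in every round, closing with a global count: the potential blockers of $v_j$ are the $k$ vertices of $G'$ and the vertices $v_{k+1},\ldots,v_{j-1}$, each of which is removed in exactly one round and hence blocks $v_j$ at most once, so $v_j$ is committed within its first $j$ offerings and never exhausts its $f(v_j)\geq j$ tokens. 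This is correct and arguably more transparent than the paper's index bookkeeping.

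The one step you must shore up is your ``structural observation.'' You justify the invariant that $G_i$ restricted to the surviving $G'$-vertices matches the simulated state by saying that an edge created by deleting an outside common neighbor joins two vertices of a single original clique $K_C$. That is only literally true while the components of $G_i-V(G')$ are the original components of $G-V(G')$. When a low round deletes a $G'$-vertex $x$ lying in $K_{C_1}\cap K_{C_2}$, the completion of $N(x)$ merges $C_1$ and $C_2$ and enlarges the attachment set (every surviving neighbor of $x$ joins it), so the relevant clique is no longer any original $K_C$. What you need, and must carry as a round-by-round induction, is: \emph{at every stage the set of surviving $G'$-vertices adjacent to a given component of $G_i-V(G')$ is a clique in $G_i$}. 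This does hold --- in high rounds the new attachment sets are subsets of old ones, and in low rounds the completed neighborhoods supply exactly the missing edges because $K_{C_1}\cup K_{C_2}\subseteq N(x)\cup\{x\}$ --- and it is exactly the invariant the paper's induction verifies. It is also what legitimizes $R_i'$ in high rounds (two $G'$-vertices joined through $G_i-V(G')$ are already in one component of the simulated state), so it cannot be waved at: state it and prove its preservation.
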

\begin{proof}
If $G'$ is $f$-list rankable and $L$ is an $f$-list assignment for $G$, then we can create an $L$-ranking of $G$ in the following manner.  First rank $G'$ from $L$, then delete the at most $k$ labels used on $G'$ from the remaining lists, and finally label $v_{k+1},\ldots,v_n$ in order from their remaining lists, deleting the label given to $v_i$ from the list of each unlabeled vertex.  To see that this completes a ranking of $G$, we note that any path $P$ in $G$ between vertices with the same label must have endpoints in $G'$, in which case by hypothesis $P$ could be modified into a path $P'$ in $G'$ by replacing each maximal subpath of $P$ in $G-V(G')$ with an edge in $G'$.  Since $G'$ is ranked, and the endpoints of $P'$ have the same label, an internal vertex of $P'$ must contain a larger label, and this vertex is also an internal vertex of $P$.

We now prove by induction on $n$ that if Ranker has a winning strategy on $R^*(G',f)$ for some $*\in\{-,+,\pm\}$, then Ranker can win $R^*(G,f)$.  The base case of $n=1$ is trivial, so we assume $n>1$ and that the statement holds if $G$ has fewer than $n$ vertices.  Let $T\subseteq V(G)$ be the set of vertices from which Taxer takes a token in the first round of $R^*(G,f)$, let $R\subseteq T$ be the set of vertices to be removed by Ranker in response, let $H$ be the graph to be played on in the second round (determined by $R$ and whether the first round was high or low), and let $H'$ be the subgraph of $H$ induced by $V(G')-R$.  

Let $h:V(H)\rightarrow\mathbb{N}$ be defined by $h(v_i)=f(v_i)-|T\cap\{v_i\}|$ and $g:\mathbb{N}\rightarrow\mathbb{N}$ be defined by $g(i)=i-|R\cap\{v_1,\ldots,v_{i-1}\}|$.  We complete the proof by showing Ranker can always choose $R$ so that $h(v_i)\geq g(i)$ for $v_i\in V(H)-V(H')$, Ranker has a winning strategy on $R^*(H',h)$, and the set $S$ of vertices in $H'$ adjacent to any component $C$ of $H-V(H')$ is a (possibly empty) clique.  Fix such a component $C$, and define $S$ as above. 

If $T\cap V(G')=\emptyset$, let $R=\{v_j\}$, where $j$ is the least index of any vertex in $T$ (note that $j>k$); clearly this is a legal move by Ranker.  In this case, $H'=G'$, $h(v_i)=f(v_i)$ for $1\leq i<j$ (with $f(v_i)\geq g(v_i)$ for $k<i<j$), and $h(v_i)\geq f(v_i)-1=g(i)$ for $j<i\leq n$.  Since Ranker has a winning strategy on $R^*(G',f)$, Ranker also has a winning strategy on $R^*(H',h)$.  Since all vertices in $S$ are adjacent in $G$ to vertices in the component of $G-V(G')$ containing $C$, $S$ is a subset of a clique and thus a clique itself. 

If $T\cap V(G')\neq\emptyset$, let $R$ consist of the vertices that Ranker would remove in a winning strategy on $R^*(G',f)$ in response to Taxer removing tokens from $T\cap V(G')$.  This is a legal move by Ranker because if the first round is low, then $R$ is independent in $G'$ and thus also in $G$, and if the first round is high, then any two vertices in $R$ are in different components of $G'$ and thus also in different components of $G$ (since otherwise some component of $G-V(G')$ would have nonadjacent neighbors in $G'$).  Clearly, $h(v_i)\geq f(v_i)-1\geq g(i)$ for $v_i\in V(H)-V(H')$, Ranker has a winning strategy on $R^*(H',h)$, and $C$ is a component of $G-V(G')$.  We need only show $S$ is a clique.  

If the first round is high, then $H=G-R$, so all vertices in $S$ are adjacent in $G$ to vertices in $C$, making $S$ a subset of a clique and thus a clique itself.  If the first round is low, then $H$ is obtained from $G$ by deleting each vertex in $R$ after completing its neighborhood.  Any vertex in $S$ is either adjacent in $G$ to a vertex in $C$ or adjacent in $G$ to a vertex in $R$ adjacent to a vertex in $C$, so $S$ is a clique since the set of vertices in $G'$ adjacent to vertices in $C$ is a clique, and the neighborhood of any vertex in $R$ is completed when forming $H$ from $G$.  
\end{proof}
\begin{prop}
If the vertices of $P_n$ are $v_1,\ldots,v_n$ in order, then $P_n$ is on-line $f$-list low-rankable but not on-line $f$-list high-rankable for $f=(3,1,2,3,5,6,\ldots,n)$.
\end{prop}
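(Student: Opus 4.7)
My plan is to prove the two halves separately, with the high-rankability side being the harder one.

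For on-line $f$-list low-rankability, I would apply Lemma~\ref{Distinct} with $G'=P_4$ induced by $\{v_1,v_2,v_3,v_4\}$ and $k=4$. The single component of $G-V(G')$ is $P_{n-4}$ on $\{v_5,\ldots,v_n\}$; its neighborhood in $G'$ is the singleton clique $\{v_4\}$, and the tail condition $f(v_i)\geq i$ for $i>4$ holds with equality since $f(v_i)=i$ there. So the task reduces to exhibiting a winning Ran
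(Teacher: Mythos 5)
Your proposal is incomplete in two ways, one of which is the entire second half of the statement. The reduction you set up for the low-rankability half is exactly the paper's first move: invoke Lemma~\ref{Distinct} with $G'=P_4$ and $k=4$, noting $f(v_i)=i$ for $i>4$ and that the neighborhood of the tail component in $G'$ is the clique $\{v_4\}$. But your argument stops before doing the actual work, namely exhibiting a winning strategy for Ranker in $R^-(P_4,(3,1,2,3))$. This is not automatic: $v_2$ has only one token, so Ranker must be prepared to remove $v_2$ the moment Taxer touches it, and the case analysis matters. The paper's strategy splits on whether Taxer selects $v_2$ in round one; in each branch Ranker's response reduces the game to a smaller path whose token function again satisfies the hypotheses of Lemma~\ref{Distinct} (e.g.\ to $R^-(P_2,(2,1))$, $R^-(P_3,(2,1,3))$, $R^-(P_3,(1,2,3))$, or $R^-(P_3,(3,1,2))$). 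Without some such explicit strategy your first half is only a reduction, not a proof.

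The larger gap is that you never address the claim that $P_n$ is \emph{not} on-line $f$-list high-rankable, which requires constructing a winning strategy for Taxer in $R^+(P_n,f)$. This is where the asymmetry between the low and high games does the work: in a high round Ranker may delete at most one selected vertex per component, so the single-token vertex $v_2$ sitting inside one long component is far more fragile than in the low game. The paper's Taxer strategy opens by selecting $v_1$ and $v_4$; if Ranker deletes $v_1$, Taxer selects $v_2,v_3,v_4$, forcing Ranker to spend its one deletion on the bankrupt $v_2$ and leaving the adjacent pair $v_3,v_4$ with one token each, which Taxer then selects together; if Ranker deletes $v_4$, Taxer selects $v_1$ and $v_3$, leaving $v_1,v_2,v_3$ each with one token in a single component, and again Taxer wins by selecting an adjacent pair with one token each. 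Some argument of this shape is indispensable, and nothing in your proposal supplies it.
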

\begin{proof}
We show $P_n$ is on-line $f$-list low-rankable by giving a winning strategy for Ranker on $R^-(P_n,f)$.  By Lemma \ref{Distinct} it suffices to exhibit a winning strategy for Ranker on $R^-(P_4,(3,1,2,3))$.  If Taxer selects $v_2$ in the first round, let Ranker remove $v_2$, and also remove $v_4$ if Taxer selects that as well.  Then, assuming Taxer also removed tokens from $v_1$ and $v_3$, the game reduces to either $R^-(P_2,(2,1))$ ($v_2$ and $v_4$ removed by Ranker) or $R^-(P_3,(2,1,3))$ ($v_2$ removed), both of which lead to victory for Ranker, by Lemma \ref{Distinct}.  

If Taxer does not select $v_2$ in the first round, let Ranker remove $v_1$ if selected, $v_3$ if selected, and $v_4$ if selected and $v_3$ is not selected.  Then, assuming Taxer removed a token from $v_4$ if one was also taken from $v_3$, the game reduces to either $R^-(P_2,(1,2))$ ($v_1$ and either $v_3$ or $v_4$ removed), $R^-(P_3,(1,2,3))$ ($v_1$ removed), or $R^-(P_3,(3,1,2))$ (either $v_3$ or $v_4$ removed), each of which leads to victory for Ranker, by Lemma \ref{Distinct}.

We show $P_n$ is not on-line $f$-list high-rankable by giving a winning strategy for Taxer on $R^+(P_n,f)$.  Let Taxer begin play by selecting $v_1$ and $v_4$.  If Ranker responds by removing $v_1$, let Taxer next select $v_2$, $v_3$, and $v_4$, leaving $v_2$ with no tokens and $v_3$ and $v_4$ with one each.  Then Ranker must remove $v_2$, and if Taxer selects $v_3$ and $v_4$, Ranker can remove just one of these.  The other vertex is left with no token, so Taxer wins.  

If Ranker responds to Taxer's initial selection by removing $v_4$, let Taxer next select $v_1$ and $v_3$, leaving each remaining vertex with a single token.  Ranker cannot remove both $v_1$ and $v_3$, leaving $v_2$ adjacent to another vertex, each with a single token.  If Taxer selects both of these vertices then Ranker can remove just one, leaving the other with no token.  Thus Taxer wins $R^+(P_n,f)$.
\end{proof}
\section{List ranking and on-line list ranking graph minors}
\label{sec:prelim}
We now examine how ranking a graph relates to ranking one of its minors in our various versions of the ranking problem.  We first recall the definition of a graph minor and introduce some notation to be used in this section.
\begin{definition}
To \emph{contract} an edge $uv$ from a graph $G$, delete $u$ and $v$ and add a new vertex $w$ adjacent to all former neighbors of $u$ and $v$ in $G$.  A \emph{minor} of $G$ is any graph $G'$ that can be obtained by performing zero or more edge contractions on a subgraph of $G$.
\end{definition}
For the rest of this section, fix a graph $G$, a minor $G'$ of $G$, and a function $f:V(G)\cup V(G')\rightarrow\mathbb{N}$.  
\begin{definition}
For $w\in V(G')$, define $U(w)=\{w\}$ if $w\in V(G)$ and otherwise define $U(w)$ to be the set consisting of each $u\in V(G)$ such that a series of contractions turned an edge containing $u$ into $w$.  Let $H_w$ denote the subgraph of $G$ induced by $U(w)$.
\end{definition}
Note that if $w\in V(G')$, then $H_w$ is a connected subgraph of $G$.  Furthermore, if $x$ and $y$ are disjoint vertices in $G'$, then $U(x)\cap U(y)=\emptyset$, but $U(x)\cup U(y)$ induces a connected subgraph of $G$ if $x$ and $y$ are adjacent in $G'$.
\begin{prop}\label{LMinor}
If $G$ is $f$-list rankable, but $H_w$ is not $(f-f(w))$-list rankable for each $w\in V(G')$, then $G'$ is $f$-list rankable.  
\end{prop}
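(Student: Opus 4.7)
The plan is to lift a given $f$-list assignment $L'$ on $G'$ to an $f$-list assignment $L$ on $G$, invoke the hypothesized $f$-list rankability of $G$, and extract an $L'$-ranking of $G'$ by choosing a representative vertex in each branch set $U(w)$. For each $w\in V(G')$ the hypothesis supplies a bad $(f-f(w))$-list assignment $L_w^*$ on $H_w$, that is, one admitting no $L_w^*$-ranking. Let $K$ strictly exceed every label appearing in any $L_w^*$, and put $L''(w):=\{x+K:x\in L'(w)\}$; since an $L''$-ranking of $G'$ unshifts to an $L'$-ranking, it suffices to produce one for $L''$. For $u\in U(w)$ set $L(u):=L_w^*(u)\cup L''(w)$, a disjoint union by choice of $K$, so $|L(u)|=f(u)$; $f$-list rankability of $G$ then furnishes an $L$-ranking $\alpha$.

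For each $w$, some $u\in U(w)$ must satisfy $\alpha(u)\in L''(w)$, since otherwise $\alpha$ restricted to $V(H_w)$ would be an $L_w^*$-ranking of $H_w$ (paths in the induced subgraph $H_w$ are still paths in $G$), contradicting badness of $L_w^*$. Define $\beta''(w):=\max\{\alpha(u):u\in U(w),\ \alpha(u)\in L''(w)\}$, attained at some $u_w\in U(w)$, and set $\beta(w):=\beta''(w)-K\in L'(w)$; verifying that $\beta$ is an $L'$-ranking amounts to checking that $\beta''$ is an $L''$-ranking. Fix distinct $w,w'\in V(G')$ with $\beta''(w)=\beta''(w')$ and any $w,w'$-path $P'=w_0w_1\cdots w_k$ in $G'$, and lift $P'$ to a $u_w,u_{w'}$-path $P$ in $G$: for $0\leq i\leq k-1$ pick an edge of $G$ from some $a_i\in U(w_i)$ to some $b_{i+1}\in U(w_{i+1})$; in each $H_{w_i}$ (connected) choose a subpath from entry to exit ($u_w$ to $a_0$ for $i=0$, $b_i$ to $a_i$ for $0<i<k$, $b_k$ to $u_{w'}$ for $i=k$); concatenate to form $P$, which is simple because the $U(w_i)$ are pairwise disjoint and the $w_i$ distinct.

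Because $\alpha(u_w)=\alpha(u_{w'})$, some interior vertex $x$ of $P$ has $\alpha(x)>\alpha(u_w)$; say $x\in U(w_j)$. If $\alpha(x)\in L_{w_j}^*(x)$ then $\alpha(x)\leq K<\alpha(u_w)$, a contradiction, so $\alpha(x)\in L''(w_j)$, and the cases $j=0$ and $j=k$ are ruled out by the maximality in the definitions of $\beta''(w)=\alpha(u_w)$ and $\beta''(w')=\alpha(u_{w'})$. Hence $1\leq j\leq k-1$ and $\beta''(w_j)\geq\alpha(x)>\beta''(w)$, supplying the required interior vertex of $P'$ with strictly larger $\beta''$-label. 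I expect the main obstacle to lie in designing $L$ and $\beta''$ so this final argument succeeds: the two synergistic tricks are shifting $L'$ strictly above every $L_w^*$ (ensuring that $\alpha$-values landing in the ``small block'' $L_{w_j}^*$ are smaller than $\alpha(u_w)$) and defining $\beta''(w)$ by \emph{maximization} (absorbing any excess $\alpha$-value on $U(w)$ or $U(w')$ into $\beta''(w)$ or $\beta''(w')$ itself).
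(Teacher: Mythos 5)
Your proof is correct and follows essentially the same route as the paper's: lift $L'$ to an $f$-list assignment on $G$ by unioning each branch set's lists with a ``bad'' assignment $L_w^*$ placed strictly below the (shifted) $L'$-labels, take an $L$-ranking of $G$, and define the label of $w$ as the maximum $\alpha$-value on $U(w)$, verifying the ranking condition by lifting paths of $G'$ to paths of $G$ through the connected subgraphs $H_w$. The only (trivial) omission is that you never assign lists to vertices of $G$ lying outside every $U(w)$; any $f(t)$-element list works there, as in the paper.
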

\begin{proof}
We show $G'$ is $f'$-list rankable by constructing an $L'$-ranking $\alpha'$ from an arbitrary $f$-list assignment $L'$ for $G'$.  For each $w\in V(G')-V(G)$, let $L_w$ be an $(f-f(w))$-list assignment for $H_w$ such that $H_w$ has no $L_w$-ranking; without loss of generality assume that the smallest label in any list assigned by $L'$ is larger than the largest label in any list assigned by any $L_w$.  Let $L$ be the $f$-list assignment for $G$ obtained from $L'$ by letting $L(u)=L'(w)\cup L_w(u)$ for each $w\in V(G')-V(G)$ and $u\in U(w)$, $L(v)=L'(v)$ for each $v\in V(G)\cap V(G')$, and $L(t)=\{1,\ldots,f(t)\}$ for each $t\in V(G)-\bigcup_{w\in V(G')}U(w)$.  

By hypothesis $G$ has an $L$-ranking $\alpha$.  We modify $\alpha$ into an $L'$-ranking $\alpha'$ of $G'$ by letting $\alpha'(w)=\max\{\alpha(u):u\in U(w)\}$.  Note that $\alpha'(w)\in L'(w)$ for all $w\in V(G')$ since $\alpha$ cannot assign every vertex in $H_w$ a label from $L_w$ (or else $H_w$ would have an $L_w$-ranking), and the smallest label in $L'(w)$ is larger than the largest label anywhere in $L_w$.  We prove that $\alpha'$ is in fact a ranking of $G'$ by showing that if $w_1w_2\ldots w_m$ is a nontrivial path $P'$ in $G'$ satisfying $\alpha'(w_1)=\alpha'(w_m)$, then $\alpha'(w_i)>\alpha'(w_1)$ for some $1<i<m$. 

By the definition of $\alpha'$, for $1\leq i\leq m$ there exists $u_i\in U(w_i)$ such that $\alpha (u_i)=\alpha'(w_i)$.  Since $w_i$ and $w_{i+1}$ are adjacent in $G'$ for $1\leq i<m$, for $1\leq i\leq m$ there exist (not necessarily distinct) vertices $x_i,y_i\in U(w_i)$ such that $x_1=u_1$, $y_m=u_m$, and $y_i$ is adjacent to $x_{i+1}$ in $G$ for $1\leq i<m$.  Since each $H_{w_i}$ is connected, there exists a $x_i,y_i$-path $P^i$ in $H_{w_i}$ for $1\leq i\leq m$.  Hence concatenating the paths $P^1,P^2,\ldots,P^m$ forms a $u_1,u_m$-path $P$ in $G$.  Since $\alpha$ is a ranking of $G$ and $\alpha(u_1)=\alpha'(w_1)=\alpha'(w_m)=\alpha(u_m)$, for some $i$ there exists $z\in V(P^i)$ satisfying $\alpha(z)>\alpha(u_1)$, in which case $\alpha'(w_i)\geq\alpha(z)>\alpha(u_1)=\alpha'(w_1)$.
\end{proof}
\begin{prop}\label{-Minor}
Fix $*\in\{-,\pm\}$.  If Ranker wins $R^*(G,f)$ but Taxer wins $R^-(H_w,f-f(w))$ for each $w\in V(G')$, then Ranker wins $R^*(G',f)$.
\end{prop}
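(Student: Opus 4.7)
The plan is to mimic the proof of Proposition \ref{LMinor} in an on-line setting. Fix a winning Ranker strategy $\sigma$ on $R^*(G,f)$ and, for each $w \in V(G')$, a winning Taxer strategy $\tau_w$ on $R^-(H_w, f-f(w))$; for $w \in V(G) \cap V(G')$ the strategy $\tau_w$ is vacuous, since $H_w$ is a single vertex whose initial subgame token count is already zero. Ranker plays $R^*(G',f)$ while privately maintaining a shadow play of $R^*(G,f)$ in which she answers with $\sigma$, and she manufactures the shadow Taxer's moves by combining the real Taxer's moves in $R^*(G',f)$ with the $\tau_w$'s.

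The shadow play proceeds in two alternating modes: an \emph{auxiliary} mode, consisting only of low rounds (legal under any $* \in \{-,\pm\}$), and a \emph{synchronized} mode. In auxiliary mode, shadow-Taxer selects in each low round the vertices dictated by each still-active $\tau_w$'s next move in its $H_w$-subgame, together with every remaining vertex of $V(G)-\bigcup_w U(w)$ and every remaining vertex of $U(w)$ that is not the currently designated ``critical'' vertex $u^*_w$. Ranker iterates auxiliary rounds until the shadow graph contains only the $u^*_w$'s for $w \in V(G')$, each with exactly $f(w)$ shadow tokens left. This terminates because each $\tau_w$, being winning, must eventually force some $u^*_w \in U(w)$ to have $0$ subgame tokens without being removed by $\sigma$, and $\sigma$, being a winning Ranker strategy in the shadow game, cannot allow any other selected vertex to starve.

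In synchronized mode, each real outer round of $R^*(G',f)$ triggers exactly one shadow round of the same type. When the real Taxer selects $T'_i \subseteq V(G'_i)$, shadow-Taxer selects $T_i = \{u^*_w : w \in T'_i\}$; then $\sigma$'s response $R_i$ determines Ranker's outer move $R'_i = \{w \in T'_i : u^*_w \in R_i\}$. Since each $u^*_w$ enters synchronized mode with exactly $f(w)$ tokens and loses one each time $w$ loses one in the outer game, the token counts stay in lockstep; in particular, $\sigma$ must remove $u^*_w$ before $w$ could starve in the outer game. When $\sigma$ eventually empties the shadow graph, every $w \in V(G')$ has been removed in the outer game, and the resulting labeling is a valid ranking of $G'$ by the path argument of Proposition \ref{LMinor}.

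The main obstacle is to verify that each $R'_i$ is a legal Ranker response in $R^*(G'_i,f)$; this reduces to showing that the restriction of the shadow graph to $\{u^*_w : w \in V(G'_i)\}$ is isomorphic to $G'_i$ under the map $u^*_w \mapsto w$ throughout the synchronized phase. At the end of auxiliary mode, this isomorphism holds because each removal inside a $U(w)$ propagates the original $G$-edges between $U(w)$ and $U(w')$ onto the surviving $u^*_{w}$ and $u^*_{w'}$ via neighborhood completion, using that each $H_w$ is connected. The isomorphism is then preserved during synchronized rounds: neighborhood completions in low shadow rounds correspond to neighborhood completions in low outer rounds, while component structure in high shadow rounds corresponds to component structure of $G'_i$. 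Hence independence constraints for low rounds and different-component constraints for high rounds transfer directly from the shadow game to the outer game, and $R'_i$ is legal as required.
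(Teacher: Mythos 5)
Your overall architecture is the same as the paper's: simulate an auxiliary copy of $R^*(G,f)$ in which Ranker answers with her winning strategy $\sigma$, use the Taxer strategies $\tau_w$ on low rounds to force a ``partner'' $u^*_w\in U(w)$ down to at most $f(w)$ tokens, clear all other vertices with further low rounds so that the surviving partners carry a copy of $G'$, and then run the real game on $G'$ in lockstep with the shadow game. However, your auxiliary mode as written has a genuine gap. You tax, in every low round, both the vertices dictated by $\tau_w$ \emph{and} every vertex of $U(w)$ other than the ``currently designated'' critical vertex $u^*_w$. This is circular --- $u^*_w$ is by definition the vertex that $\tau_w$ eventually starves, which is not determined until $\tau_w$'s play (against $\sigma$'s responses) has finished, so it cannot be designated in advance --- and, worse, it corrupts the simulation of $R^-(H_w,f-f(w))$: once you tax vertices of $U(w)$ that $\tau_w$ did not select that round, $\sigma$ may remove some of them, and such a removal is not a legal Ranker response in the subgame, so $\tau_w$'s winning guarantee no longer applies and you cannot conclude that any vertex of $U(w)$ ever reaches $f(w)$ tokens. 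The paper avoids this by strictly separating the phases: first play $\tau_w$ alone on $U(w)$ (taxing nothing else in $U(w)$) until some surviving $u\in U(w)$ has at most $f(w)$ tokens, declare \emph{that} vertex the partner, and only then begin the clearing rounds. With that reordering your argument goes through.

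Two smaller corrections. First, you should claim only that each $u^*_w$ ends the auxiliary phase with \emph{at most} $f(w)$ tokens, not exactly $f(w)$; the lockstep argument only needs the inequality ($u^*_w$ starves no later than $w$ would, so $\sigma$ forces its removal, hence $w$'s, in time). Second, the map $u^*_w\mapsto w$ need not be an isomorphism onto the restriction of the shadow graph: since $G'$ is a minor of a \emph{subgraph} of $G$, the graph $G$ may contain edges between $U(x)$ and $U(y)$ for nonadjacent $x,y\in V(G')$, and neighborhood completion will then create the edge $u^*_xu^*_y$ in the shadow graph. What you actually get (and all you need) is that $G'_i$ embeds as a spanning subgraph of the shadow graph on the partners; that inclusion of edge sets is exactly what transfers independence in low rounds and the different-components condition in high rounds from $\sigma$'s move $R_i$ to your move $R'_i$, and it is the statement the paper proves.
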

\begin{proof}
Based on Taxer's strategy on $R^*(G',f)$, we define a strategy for Taxer on an auxilliary game $R^*(G,f)$, and use Ranker's winning strategy on $R^*(G,f)$ to define a winning strategy for Ranker on $R^*(G',f)$.  Let Taxer begin the auxilliary game $R^*(G,f)$ by isolating play to each $H_w$ one-at-a-time and copying a winning strategy from $R^-(H_w,f-f(w))$ until some $u\in U(w)$ is left with at most $f(w)$ tokens; once this happens say that $w$ and $u$ are \emph{partners}.  Let Taxer continue the auxilliary game $R^*(G,f)$ by declaring low rounds and taking tokens from vertices of $G$ not partnered with vertices of $G'$ until Ranker has removed all such vertices.  

Since each round so far has been low, the neighborhood of each vertex removed by Ranker has been completed before the next round, so the partnership between the vertices of $G'$ and the vertices of the altered graph $H$ of $R^*(G,f)$ is a graph isomorphism from $G'$ to a spanning subgraph of $H$.  Letting $h(u)$ count the tokens on each $u\in V(H)$, we note that $h(u)\leq f(w)$ if $u$ is partnered with $w\in V(G')$.  The auxilliary game $R^*(G,f)$ has been reduced to $R^*(H,h)$, for which Ranker has a winning strategy since Ranker wins $R^*(G,f)$.  

To complete the proof, we use induction on $|V(H)|$ to show that if $G'$ is isomorphic to a subgraph of $H$, with Ranker having a winning strategy for $R^*(H,h)$ and $f(w)\geq h(u)$ for each $w\in V(G')$ partnered with $u\in V(H)$, then Ranker wins $R^*(G',f)$.  The base case of $H=K_1$ is trivial, so we may assume that $|V(H)|>1$ and the statement holds for smaller graphs.  Supposing Taxer begins $R^*(G',f)$ by taking tokens from vertices in the set $T'\subseteq V(G')$, let Taxer declare the first round of $R^*(H,h)$ to be the same type before selecting the set $T$ of vertices of $H$ partnered with vertices in $T'$.  If $R$ is the set of vertices removed by Ranker to create the graph $F$ in a winning strategy on $R^*(H,h)$, let Ranker respond in $R^*(G',f)$ by removing the set $R'$ of vertices of $G'$ partnered with vertices in $R$ to create the graph $F'$.  

Clearly every element of $V(F')$ has a partner in $V(F)$ with at most as many tokens remaining, and the partnership relation still provides a graph isomorphism from $F'$ to a subgraph of $F$ whether the first round was low (vertices selected by Ranker are deleted but their neighborhoods are completed) or high (vertices selected by Ranker are simply deleted without adding any new edges).  Since Ranker wins $R^*(H,h)$, Ranker can win the new game on $F$, so by the inductive hypothesis Ranker can win the new game on $F'$ as well. Thus Ranker wins $R^*(G',f)$.
\end{proof}
\begin{prop}\label{+Minor}
If Ranker wins $R^+(G,f)$ but Taxer wins $R^+(H_w,f-f(w))$ for each $w\in V(G')$, then Ranker wins $R^+(G',f)$.
\end{prop}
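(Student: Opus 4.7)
The plan is to mirror the proof of Proposition~\ref{-Minor} as closely as possible: given an arbitrary Taxer strategy on $R^+(G', f)$, set up an auxiliary $R^+(G, f)$ game, then translate Ranker's winning strategy there into Ranker's moves in $R^+(G', f)$. Phase~1 proceeds exactly as before: for each $w \in V(G')$ in turn, Taxer in the auxiliary game restricts play to $H_w$ and copies a winning strategy from $R^+(H_w, f - f(w))$ until some $u_w \in U(w)$ has at most $f(w)$ tokens remaining. This works because any Ranker response in the auxiliary game must respect components of all of $\widetilde{G}$, a strictly more restrictive condition than respecting components of the induced subgraph on $U(w)$, so Taxer's winning strategy from $R^+(H_w, f - f(w))$ remains winning; declare $w$ and $u_w$ to be \emph{partners}.

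The critical divergence from Proposition~\ref{-Minor} is that we cannot declare low rounds: the neighborhood completion that made it possible to clear non-partnered vertices in Phase~2 while preserving the ``$G'$ isomorphic to a spanning subgraph of $H$'' property is simply not available in $R^+$. A direct high-round analogue of Phase~2 would clear non-partnered vertices but generally break the spanning-subgraph conclusion, because removing a non-partnered vertex in a high round does not create the edge between its two partnered neighbors that the $G'$-structure would require.

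My plan is to bypass Phase~2 entirely and simulate the main game directly, representing each $w \in V(G'_{\text{current}})$ not by a single partner but by the entire surviving set $\widetilde{H}_w = U(w) \cap V(\widetilde{G})$. When Taxer taxes $T'$ in $R^+(G', f)$, Taxer in the auxiliary game taxes $\bigcup_{w \in T'} \widetilde{H}_w$; when Ranker in the auxiliary game responds with $R_{\text{aux}}$, Ranker in $R^+(G', f)$ responds with $R' = \{w \in T' : \widetilde{H}_w \subseteq R_{\text{aux}}\}$. Taxing all of $\widetilde{H}_w$ rather than only $u_w$ keeps the difference between the main-game token count on $w$ and each auxiliary token count on vertices of $\widetilde{H}_w$ fixed, so the token bookkeeping reduces to that of the auxiliary game.

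The main obstacle is verifying the invariant that legalizes $R'$: whenever $w_1, w_2$ lie in the same component of $G'_{\text{current}}$, some vertex of $\widetilde{H}_{w_1}$ must lie in the same component of the auxiliary state as some vertex of $\widetilde{H}_{w_2}$. This invariant follows initially because each edge $w_1 w_2 \in E(G')$ came from an edge of $G$ between $U(w_1)$ and $U(w_2)$, but its persistence across simulated high rounds---where neighborhood completion is unavailable and paths through removed non-partnered vertices can be broken---is the delicate technical work; the conclusion should then follow by an induction on the size of $\widetilde{G}$ analogous to the induction at the end of the proof of Proposition~\ref{-Minor}.
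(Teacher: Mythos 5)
Your diagnosis of why the Phase-2 argument from Proposition \ref{-Minor} fails in $R^+$ is correct, but the repair you propose does not work. The fatal step is the removal rule $R' = \{w \in T' : \widetilde{H}_w \subseteq R_{\mathrm{aux}}\}$: in a high round Ranker may delete at most one vertex per component of the auxiliary graph, so whenever $\widetilde{H}_w$ is connected with at least two vertices it can never be contained in $R_{\mathrm{aux}}$, hence $w$ is never removed, and Taxer bankrupts $w$ by taxing $\{w\}$ alone every round. The token bookkeeping also does not ``reduce to the auxiliary game'': after Phase~1 only the single partner $u_w$ is guaranteed to hold at most $f(w)$ tokens, and once the auxiliary Ranker deletes $u_w$ the surviving vertices of $\widetilde{H}_w$ may all carry more tokens than $w$ does, so the auxiliary game never signals that $w$ must be removed before it runs out. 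Finally, the component invariant you defer as ``delicate technical work'' genuinely fails once $\widetilde{H}_w$ can be a proper --- and, after high-round deletions, disconnected --- subset of $U(w)$.

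The paper's proof drops Phase~1 entirely and instead carries the hypothesis ``Taxer wins $R^+(H_w, f-f(w))$'' as an inductive invariant, inducting on $|V(G)|$ one round at a time. Taxer in the auxiliary game taxes $T=\bigcup_{w\in T'}U(w)$ (as you do), but Ranker removes $R'=\{w\in T': U(w)\cap R\neq\emptyset\}$, i.e., $w$ is removed as soon as \emph{any} vertex of its branch set is removed. This choice does all the work: a surviving $w$ has $U(w)$ entirely intact, so $F'$ is again a minor of $F$ with connected branch sets (which is what makes $R'$ legal and renders your component invariant automatic), and a surviving taxed $w$ has all of $U(w)$ taxed along with it, so $h-h(w)=f-f(w)$ on $H_w$ and the hypothesis that Taxer wins $R^+(H_w,h-h(w))$ persists into the next round, closing the induction. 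To salvage your write-up, discard the partner construction and replace the ``$\subseteq$'' rule with this invariant-preserving ``$\cap\neq\emptyset$'' rule.
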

\begin{proof}
We show Ranker wins $R^+(G',f)$ by performing induction on $|V(G)|$, with the base case of $G=K_1$ being trivial.  Now assume $|V(G)|>1$ and the statement holds for smaller graphs.  Suppose Taxer begins $R^+(G',f)$ by taking tokens from vertices in the set $T'\subseteq V(G')$.  Based on $T'$, we decide from which set $T\subseteq V(G)$ Taxer will take tokens in the first round of the auxilliary game $R^+(G,f)$: let $T=\bigcup_{w\in T'}U(w)$.  After Ranker responds as part of a winning strategy in the auxilliary game $R^+(G,f)$ by removing the vertices of some $R\subseteq T$ to create the graph $F$, we decide which set $R'\subseteq T'$ Ranker will remove from $G'$ to create the graph $F'$ in $R^+(G',f)$: let $R'=\{w\subseteq T':U(w)\cap R\neq\emptyset\}$.  

Then $R^+(G',f)$ has been reduced to $R^+(F',h)$ and $R^+(G,f)$ has been reduced to $R^+(F,h)$, where the function $h$ is defined by $h(v)=f(v)-1$ if either $v\in V(G')$ and $U(v)\subseteq T-R$ or $v\in V(G)$ and $v\in T-R$, and $h=f$ elsewhere.  To complete the proof, we show that Ranker wins $R^+(F',h)$.  Note that $F'$ is a minor of $F$ (since $w\in V(F')$ if and only if $U(w)\subseteq V(F)$), Ranker wins $R^+(F,h)$ (since Ranker wins $R^+(G,f)$), and $|V(F)|<|V(G)|$.  Thus by the inductive hypothesis it suffices to show for each $w\in V(F')$ that $H_w$ remains intact as a subgraph of $F$ and Taxer wins $R^+(H_w,h-h(w))$.

We need only consider $w\notin R'$, since $w\in V(F')$ if and only if $w\notin R'$.  If $w\notin T'$, then $U(w)\cap T=\emptyset$, so $h(v)=f(v)$ for $v\in \{w\}\cup U(w)$.  If $w\in T'-R'$, then $U(w)\subseteq T-R$, so $h(v)=f(v)-1$ for $v\in\{w\}\cup U(w)$.  In either case $H_w$ is a subgraph of $F$ and $h-h(w)=f-f(w)$.  Hence Taxer wins $R^+(H_w,h-h(w))$ since Taxer wins $R^+(H_w,f-f(w))$ by hypothesis.  
\end{proof}
We now produce some corollaries of Propositions \ref{LMinor}, \ref{-Minor}, and \ref{+Minor}, framed by statements concerning the original ranking problem.  For notational convenience, we say that $G$ being $f$-list rankable is equivalent to Ranker having a winning strategy for the game $R^{\ell}(G,f)$.  Fix $*\in\{\ell,-,+,\pm\}$.  

Suppose $G'$ is a subgraph of $G$, and $f:V(G)\rightarrow\mathbb{N}$ and $f':V(G')\rightarrow\mathbb{N}$ satisfy $f'(w)\geq f(w)$ for all $w\in V(G')$.  If $G$ is $f$-rankable, then $G'$ is clearly $f'$-rankable, since the restriction to $G'$ of an $f$-ranking of $G$ is an $f'$-ranking of $G'$.  A more general statement applies to the list versions of ranking, though not to the original ranking problem, as we shall see.  
\begin{cor}\label{Monotone}
Let $G'$ be a minor of a graph $G$, and suppose $f:V(G)\rightarrow\mathbb{N}$ and $f':V(G')\rightarrow\mathbb{N}$ satisfy $f'(w)\geq\min_{u\in U(w)}f(u)$ for all $w\in V(G')$.  If Ranker wins $R^*(G,f)$, then Ranker also wins $R^*(G',f')$.
\end{cor}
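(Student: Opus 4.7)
The plan is to reduce the corollary to Propositions~\ref{LMinor}, \ref{-Minor}, and \ref{+Minor} by choosing the token function in those propositions carefully. The first observation I would make is that each of the four ranking variants is monotone in its token allocation: if Ranker wins $R^*(G,g)$ and $g\le g'$ pointwise, then Ranker wins $R^*(G,g')$ too, since Ranker can simulate the earlier strategy while ignoring any extra tokens (in the list case, just restrict each given list to a sub-list of the prescribed smaller size). Applying this to $(G',f')$, we may assume without loss of generality that $f'(w)=\min_{u\in U(w)}f(u)$ for every $w\in V(G')$, since lowering $f'$ only makes Ranker's task harder.

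Next, I would extend $f$ to a single function on $V(G)\cup V(G')$ by declaring $f(w):=f'(w)$ for each $w\in V(G')\setminus V(G)$. This is consistent with the existing values on $V(G)\cap V(G')$: for such $w$ we have $U(w)=\{w\}$, so the reduction above forces $f'(w)=f(w)$. For each $w\in V(G')$, fix $u_w\in U(w)$ achieving the minimum, so that $f(u_w)=f(w)$ and hence $(f-f(w))(u_w)=0$. Thus $u_w$ is a vertex of $H_w$ with an empty list under any $(f-f(w))$-list assignment, which makes $H_w$ not $(f-f(w))$-list rankable. For the on-line games, Taxer trivially wins each $R^\dagger(H_w,f-f(w))$: the tokenless vertex $u_w$ can never be included in $T_i$ and hence never removed, so it survives in the active graph with zero tokens after the first round of play (Taxer plays any legal move elsewhere, and if $H_w=\{u_w\}$ then the victory is immediate).

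Finally, I would apply the appropriate proposition to the pair $(G,G')$ equipped with this extended $f$: Proposition~\ref{LMinor} when $*=\ell$, Proposition~\ref{-Minor} when $*\in\{-,\pm\}$, and Proposition~\ref{+Minor} when $*=+$. Each concludes that Ranker wins $R^*(G',f)$, and since $f=f'$ on $V(G')$ this is precisely the desired statement that Ranker wins $R^*(G',f')$.

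The main obstacle is almost entirely bookkeeping — checking monotonicity in all four variants, producing a consistent extension of $f$ to the union of vertex sets, and confirming that a vertex starting with an empty list hands the relevant game to Taxer — because the structural work has already been done inside Propositions~\ref{LMinor}, \ref{-Minor}, and \ref{+Minor}. The only place one must be slightly careful is in the monotonicity reduction, since it must be verified for the on-line variants (not merely for ordinary list rankability) before we are free to lower $f'$ to the pointwise minimum.
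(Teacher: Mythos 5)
Your proposal is correct and follows essentially the same route as the paper: invoke Propositions~\ref{LMinor}, \ref{-Minor}, and \ref{+Minor} after observing that some $u\in U(w)$ is left with no tokens (an empty list) in $R^*(H_w,f-f'(w))$, so Taxer trivially wins there. The paper compresses this into one sentence and silently treats $f$ and $f'$ as a single function on $V(G)\cup V(G')$; your explicit monotonicity reduction and extension of $f$ just fill in that bookkeeping.
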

\begin{proof}
The statement follows from applying either Proposition \ref{LMinor}, \ref{-Minor}, or \ref{+Minor}, since clearly Taxer wins $R^*(H_w,f-f'(w))$ for each $w\in V(G')$ such that $f(u)-f'(w)\leq 0$ for some $u\in U(w)$.
\end{proof}
To see why Corollary \ref{Monotone} cannot be extended to the original ranking problem, let $n\geq 3$ and consider $G=P_n$ with vertices $v_1,\ldots,v_n$ in order, and construct the minor $G'$ of $G$ by contracting edges of $G$ until only a single edge $xy$ remains.  Let $f(v_1)=f(v_n)=f'(x)=f'(y)=1$ and $f(v_i)=n$ for $1<i<n$.  We can give $G$ an $f$-ranking by labeling $v_1$ and $v_n$ with $1$ and $v_i$ with $i$ for $1<i<n$, but we cannot give $G'$ an $f'$-ranking because $x$ and $y$ would both have to be labeled with $1$ even though they are adjacent.

Clearly a graph is $f$-rankable if and only if each of its components is $f$-rankable, and this statement also holds for the list versions of ranking.
\begin{cor}\label{Component}
Ranker wins $R^*(G,f)$ if and only if Ranker wins $R^*(G',f)$ for each component $G'$ of $G$.
\end{cor}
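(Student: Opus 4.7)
The plan is to handle both directions separately, with the forward direction being an immediate application of Corollary \ref{Monotone} and the backward direction requiring a combining argument.

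For the forward direction, observe that each component $G'$ of $G$ is itself a minor of $G$: take the induced subgraph on $V(G')$ and contract no edges, so $U(w)=\{w\}$ for every $w\in V(G')$. Taking $f'$ to be the restriction of $f$ to $V(G')$ makes the hypothesis $f'(w)\ge\min_{u\in U(w)}f(u)$ of Corollary \ref{Monotone} hold trivially, so Ranker wins $R^*(G',f)$ whenever Ranker wins $R^*(G,f)$.

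For the backward direction, suppose Ranker has a winning strategy $\sigma_{G'}$ on $R^*(G',f)$ for each component $G'$. When $*=\ell$, the argument is direct: given any $f$-list assignment $L$ for $G$, rank each component from its restriction of $L$; any path in $G$ between two vertices with the same label lies in one component, so the combined labeling is an $L$-ranking of $G$. For the on-line parameters, define a strategy for Ranker on $R^*(G,f)$ as follows. When Taxer selects $T_i\subseteq V(G)$ (and, if $*=\pm$, declares the round low or high), Ranker treats $T_i\cap V(G')$ as Taxer's move in the corresponding round of $R^*(G',f)$ for each component $G'$ with $T_i\cap V(G')\neq\emptyset$, leaving the remaining components untouched until a later round reaches them. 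Ranker then responds in $R^*(G,f)$ by removing the union $R_i=\bigcup_{G'}R_i^{G'}$, where $R_i^{G'}$ is what $\sigma_{G'}$ prescribes. Because components are vertex-disjoint and share no edges, this union is independent in $G$ when each $R_i^{G'}$ is independent in $G'$ (handling low rounds and $R^-$), and no two vertices of $R_i$ lie in the same component of $G$ when each $R_i^{G'}$ is a single vertex or empty (handling high rounds and $R^+$); in $R^-$, neighborhood-completion after deleting $R_i$ only adds edges within a component, so each component's induced subgraph after round $i$ is exactly the graph $\sigma_{G'}$ expects in its next round.

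Thus the play on each component forms a legal execution of $R^*(G',f)$ against $\sigma_{G'}$, so no vertex ever loses its last token before being removed, and every component is eventually emptied, giving Ranker the win on $R^*(G,f)$. The only mild subtlety is that individual rounds of $R^*(G,f)$ may be ``silent'' on some components (when $T_i$ misses them entirely), but this is harmless since a component-level winning strategy need only respond when its own Taxer plays. No other step poses a real obstacle, since the edgelessness between components makes the two legality conditions (independence for low rounds, component-distinctness for high rounds) lift trivially from component games to the combined game.
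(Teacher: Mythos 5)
Your proposal is correct and follows essentially the same route as the paper: the forward direction via Corollary \ref{Monotone} applied to a component viewed as a minor, and the backward direction by splitting Taxer's move across the component games and taking the union of the prescribed responses (the paper states this in one sentence; you supply the legality checks). One small imprecision: in a high round the component response $R_i^{G'}$ need not be a single vertex, since $G'$ may already have split into several components in the game $R^*(G',f)$ --- but the correct condition, that no two vertices of $R_i^{G'}$ share a component of the current graph, lifts to the union just as trivially, so nothing is lost.
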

\begin{proof}
If Ranker wins $R^*(G,f)$, then Ranker wins $R^*(G',f)$ for each component $G'$ of $G$, by Corollary \ref{Monotone}.  Now suppose Ranker wins $R^*(G',f)$ for each component $G'$ of $G$.  If $*=\ell$, then Ranker can win $R^*(G,f)$ since each component can be dealt with individually.  If $*\in\{-,+,\pm\}$, then Ranker can win $R^*(G,f)$ by treating each move by Taxer on $G$ as a collection of separate moves on the games $R^*(G',f)$ and playing winning strategies for each of those games.
\end{proof}
We shall see that the following statement does not hold for the original ranking problem.
\begin{cor}\label{EdgeContraction}
Suppose $G'$ is obtained from $G$ by contracting an edge $uv$ into a vertex $w$, and $f(u)=f(v)=f(w)+1$.  If Ranker wins $R^*(G,f)$, then Ranker wins $R^*(G',f)$.
\end{cor}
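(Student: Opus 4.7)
The plan is to invoke Propositions \ref{LMinor}, \ref{-Minor}, and \ref{+Minor} according to the value of $*$, with the given $G$, $G'$, and $f$ extended so that $f(w)$ is the value specified in the hypothesis. Because $G'$ is obtained from $G$ by contracting the single edge $uv$ into $w$, the sets $U(\cdot)$ are given by $U(w)=\{u,v\}$ and $U(x)=\{x\}$ for every other $x\in V(G')\cap V(G)$. Consequently $H_w$ is the edge $uv$, a copy of $P_2$, while every other $H_x$ is a single vertex. For each such single vertex $H_x$, the shifted capacity $f(x)-f(x)=0$ leaves $x$ bankrupt from the start, so Taxer trivially wins $R^*(H_x, f-f(x))$ (and $H_x$ trivially fails to be list rankable under the empty-budget assignment)—this is exactly the situation used implicitly in the proof of Corollary \ref{Monotone}. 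The only substantive check is therefore the case of $H_w$, a $P_2$ whose endpoints carry the shifted capacity $f(u)-f(w)=f(v)-f(w)=1$.

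For this $P_2$ with a single token on each of $u$ and $v$, I show Taxer wins (or Ranker fails to list rank) in each of the four games via the same first-round move: take a token from both endpoints. In $R^-$, Ranker's response $R_1$ must be independent in $H_w$; since $u$ and $v$ are adjacent, $|R_1|\leq 1$, and the surviving vertex is left with zero tokens. In $R^+$, no two vertices of $R_1$ may share a component, and $H_w$ is connected, so again $|R_1|\leq 1$ and the surviving vertex is bankrupt. For $R^\pm$, Taxer simply declares the first round low and runs the $R^-$ argument. In the list version, assign both $u$ and $v$ the singleton list $\{1\}$: any $L$-ranking would force both to receive the label $1$, violating the ranking condition because they are adjacent.

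With the Taxer-wins (respectively, not-list-rankable) hypothesis established for every $H_w$, the appropriate minor proposition applies and the conclusion $R^*(G',f)$ follows at once. There is no substantial obstacle; the only point worth flagging is that the equality $f(u)=f(v)=f(w)+1$ is precisely the threshold that makes the $P_2$ argument go through: a shifted capacity of $2$ or more on $H_w$ would give Ranker enough slack to survive the game there, and the hypothesis of Propositions \ref{-Minor} and \ref{+Minor} would fail, so the reduction is sharp in this sense.
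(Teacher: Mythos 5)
Your proposal is correct and follows the paper's proof exactly: the paper likewise applies Proposition \ref{LMinor}, \ref{-Minor}, or \ref{+Minor} after observing that Taxer wins (respectively, that list ranking fails) on $H_w=P_2$ with one unit of shifted capacity on each endpoint, the other $H_x$ being single vertices with shifted capacity $0$. You have merely spelled out the ``clearly Taxer wins'' step that the paper leaves to the reader.
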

\begin{proof}
Apply either Proposition \ref{LMinor}, \ref{-Minor}, or \ref{+Minor}, since clearly Taxer wins $R^*(H_w,f-f(w))$ if $H_w$ consists of an edge $uv$ such that $f(u)-f(w)=f(v)-f(w)=1$.
\end{proof}
To see why Corollary \ref{EdgeContraction} cannot be extended to the original ranking problem, let $n\geq 6$ and consider $G=P_n$ with vertices $v_1,\ldots,v_n$ in order and the minor $G'$ of $G$ obtained by contracting the edge $v_{n-1}v_{n-2}$ into the vertex $w$.  Let $f(v_i)=n$ for $1\leq i\leq n-3$ while $f(v_{n-2})=f(w)=1$ and $f(v_{n-1})=f(v_n)=2$.  We can give $G$ an $f$-ranking by labeling $v_i$ with $i$ for $1\leq i\leq n-3$, $v_{n-2}$ and $v_n$ with $1$, and $v_{n-1}$ with $2$, but we cannot give $G'$ an $f$-ranking because $v_{n-2}$ and $w$ would both have to be labeled with $1$ even though they are adjacent.
\section{Paths} 
\label{sec:paths}
To prove our first main result, that $\rho ^{\pm}_{\ell}(P_n)=\left\lceil \log (n+1)\right\rceil = \rho(P_n)$, we instead prove the stronger statement that $P_n$ is on-line $f$-list rankable if $\sigma _f(V(P_n))<1$, where $\sigma_f(V)=\sum_{v\in V}2^{-f(v)}$ for a nonnegative integer-valued function $f$ defined on a set $V$ of vertices.

Throughout this section, we will refer to the vertices of $P_n$ be $v_1,\ldots ,v_n$ from left to right.  Recall that $R^{\pm}(P_n,f)$ starts with each $v_i$ having $f(v_i)$ tokens, with Taxer beginning play by declaring the round to be low or high and then taking one token from each element of a nonempty set $T$ of vertices of $P_n$.  If the round is low, then Ranker responds by choosing an independent set $R\subseteq T$ to remove from $P_n$, replacing each removed vertex with an edge between its neighbors to get a path on $n-|R|$ vertices.  If the round is high, then Ranker responds by choosing a vertex $v\in T$ to delete from $P_n$ to get two path components on a total of $n-1$ vertices.

We isolate the following argument as a lemma because it alone is enough to show that $P_n$ is on-line $f$-list low-rankable (and thus $f$-list rankable) if $\sigma _f(V(P_n))<1$.
\begin{lem}\label{UpPath}
Suppose $\sigma _f(V(P_n))<1$, and Ranker has a winning strategy on $R^{\pm}(P_m,g)$ if $m<n$ and $\sigma _g(V(P_m))<1$.  If Taxer declares the first round of $R^{\pm}(P_n,f)$ low, then Ranker can win.
\end{lem}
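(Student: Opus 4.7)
The plan is to show that Ranker can respond to the low first round in such a way that the resulting position is again a game $R^{\pm}(P_m,g)$ with $m<n$ and $\sigma_g(V(P_m))<1$, so that the hypothesized winning strategy finishes the job. Write $w(v)=2^{-f(v)}$, so that $\sigma_f(V(P_n))=\sum_v w(v)<1$. After Ranker selects an independent $R\subseteq T$, the new graph is a path $P_m$ on $m=n-|R|$ vertices (removing an independent set from a path yields a path, since each deleted internal vertex is replaced by an edge between its two neighbors), and the new token function $g$ agrees with $f$ outside $T$ and equals $f-1$ on $T\setminus R$. Doubling $w(v)$ contributes an extra $w(v)$ to the sum, so
\[
\sigma_g(V(P_m)) \;=\; \sigma_f(V(P_n)) \;+\; \sum_{v\in T\setminus R} w(v) \;-\; \sum_{v\in R} w(v).
\]
Hence it suffices to produce an independent, nonempty $R\subseteq T$ with $\sum_{v\in R} w(v)\geq \sum_{v\in T\setminus R} w(v)$.

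To build such an $R$, I would decompose $T$ into its maximal runs of vertices consecutive in $P_n$. Inside a single run $v_a,v_{a+1},\ldots,v_b$, the two alternating sets $A=\{v_a,v_{a+2},\ldots\}$ and $B=\{v_{a+1},v_{a+3},\ldots\}$ partition the run, so at least one of $\sum_A w$, $\sum_B w$ is at least half the $w$-mass of the run; let Ranker take the heavier of the two into $R$. The union $R$ over all runs is independent (distinct runs are separated by a gap in $V(P_n)\setminus T$, and within a run the chosen alternating set is independent by construction), and nonempty since $T$ is and each run contributes at least one vertex. Summing the per-run inequalities yields $\sum_{v\in R} w(v)\geq \sum_{v\in T\setminus R} w(v)$, and therefore $\sigma_g(V(P_m))\leq \sigma_f(V(P_n))<1$.

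It remains to invoke the inductive hypothesis. Either $m=0$, in which case $G_2$ is empty and Ranker has already won, or $1\leq m<n$, in which case the hypothesis applied to $g$ on $P_m$ provides a winning continuation for Ranker in $R^{\pm}(P_m,g)$. The main conceptual point, and the only step I expect to require any thought, is the alternating-set observation that lets Ranker remove at least half the $w$-mass of $T$ while staying independent; everything else is bookkeeping about how weights change in a low round and a standard check that the ambient graph remains a path.
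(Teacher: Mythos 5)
Your proof is correct and follows essentially the same approach as the paper: select an independent $R\subseteq T$ capturing at least half the weight $\sigma_f(T)$ via parity alternation, then verify $\sigma_g(V(P_m))\leq\sigma_f(V(P_n))<1$ and invoke the hypothesis. The only (immaterial) difference is that the paper intersects $T$ with the two global parity classes of $V(P_n)$ and takes the heavier, whereas you alternate within each maximal run of $T$ separately; both yield the same guarantee.
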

\begin{proof}
Let $B=\{v_{2i-1}:1\leq i\leq\left\lceil n/2\right\rceil\}$ and $C=\{v_{2i}:1\leq i\leq\left\lfloor n/2\right\rfloor\}$.  Let $R=B\cap T$ if $\sigma_f(B\cap T)\geq\sigma_f(C\cap T)$ and $R=C\cap T$ otherwise.  Then $R$ is independent and $\sigma_f(R)\geq\sigma_f(T-R)$.  The game is reduced to $R^{\pm}(P_{n-|R|},g)$, where $g=f-1$ on $T-R$ and $g=f$ elsewhere.  Since $\sigma_f(R)\geq\sigma_f(T-R)$, we have $\sigma_g(V(P_n)-R)=\sigma_f(V(P_n))+\sigma_f(T-R)-\sigma_f(R)\leq \sigma_f(V(P_n))<1$.  By hypothesis Ranker wins this game and thus the original.
\end{proof} 
\begin{thm}\label{Path}
Ranker wins $R^{\pm}(P_n,f)$ if $\sigma _f(V(P_n))<1$.
\end{thm}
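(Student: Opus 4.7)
The plan is to prove the theorem by strong induction on $n$. The base case $n = 1$ is immediate: the hypothesis $\sigma_f(V(P_1)) < 1$ forces $f(v_1) \geq 1$, and Ranker wins the single round by removing $v_1$. For the inductive step, Lemma \ref{UpPath} already handles the case in which Taxer declares the first round low, so I need only treat the case in which the first round is high.

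Suppose Taxer declares the first round high and selects a nonempty $T \subseteq V(P_n)$. Because $P_n$ is connected, my plan for Ranker is to delete a single vertex $v_k \in T$, splitting $P_n$ into the subpaths $L_k = \{v_1,\dots,v_{k-1}\}$ and $R_k = \{v_{k+1},\dots,v_n\}$ with updated token function $g$ given by $g(v_i) = f(v_i) - 1$ for $v_i \in T \setminus \{v_k\}$ and $g(v_i) = f(v_i)$ otherwise. I want to choose $v_k$ so that both $\sigma_g(L_k) < 1$ and $\sigma_g(R_k) < 1$; Corollary \ref{Component} combined with the inductive hypothesis will then close the induction.

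To pick $v_k$, I will introduce weights $c_i = 2^{-f(v_i)}$ when $v_i \notin T$ and $c_i = 2^{-(f(v_i)-1)}$ when $v_i \in T$, and partial sums $\tilde{x}(k) = \sum_{i<k} c_i$. Writing $P = \tilde{x}(n+1) = \sigma_f(V(P_n)) + \sigma_f(T)$, the two target conditions translate to $\tilde{x}(k) < 1$ and $\tilde{x}(k+1) > P - 1$, and the hypothesis $\sigma_f(V(P_n)) < 1$ forces $P < 2$. When $P \leq 1$, any $v_k \in T$ works since each of $\tilde{x}(k)$ and $P - \tilde{x}(k+1)$ is strictly less than $P \leq 1$. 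When $P > 1$, I set $k^{*} = \max\{k : \tilde{x}(k) < 1\}$ and $k^{**} = \min\{k : \sum_{i>k} c_i < 1\}$, both in $\{1,\dots,n\}$, note that $\tilde{x}(k^{*}+1) \geq 1 > P - 1$ implies $k^{**} \leq k^{*}$, and observe that any $k \in M := \{k^{**}, \dots, k^{*}\}$ automatically satisfies both inequalities.

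The main obstacle is to show $T \cap M \neq \emptyset$. I plan to argue this by contradiction. If $T \cap M = \emptyset$, then every $i \in M$ has $c_i = 2^{-f(v_i)}$, so $\sum_{i \in M} c_i = \sigma_f(M)$ and $\sigma_f(T) \leq \sigma_f(V(P_n)) - \sigma_f(M)$; meanwhile $\tilde{x}(k^{*}+1) \geq 1$ and $\tilde{x}(k^{**}) \leq P - 1$ together give $\sigma_f(M) = \tilde{x}(k^{*}+1) - \tilde{x}(k^{**}) \geq 2 - P$. Substituting $\sigma_f(T) \leq \sigma_f(V(P_n)) - \sigma_f(M)$ into $P = \sigma_f(V(P_n)) + \sigma_f(T)$ yields $P + \sigma_f(M) \leq 2\sigma_f(V(P_n))$, and combining this with $P + \sigma_f(M) \geq 2$ forces $\sigma_f(V(P_n)) \geq 1$, contradicting the hypothesis. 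Taking $v_k$ to be any vertex of $T \cap M$ then produces the desired split and completes the inductive step.
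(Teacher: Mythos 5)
Your proof is correct, and the key high-round step takes a genuinely different route from the paper's. The paper constructs a nested ordering $v_{p_1},\ldots,v_{p_n}$ of the vertices (each prefix inducing a subpath, grown greedily outward from the point where the prefix sum of $2^{-f}$ first reaches $1/2$) and has Ranker delete the first vertex of $T$ encountered in that ordering; the bookkeeping there is done with the pessimistic token functions $g_i$ that assume every not-yet-visited vertex loses a token. You instead work with the exact post-round weights $c_i$, observe that the set of feasible cut positions is precisely the interval $M=\{k^{**},\ldots,k^{*}\}$ determined by the two prefix-sum thresholds, and prove $T\cap M\neq\emptyset$ by the averaging contradiction $2\leq P+\sigma_f(M)\leq 2\sigma_f(V(P_n))<2$. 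Your version is more direct and arguably cleaner: it isolates the single inequality that matters and avoids the inductive construction of the ordering, while also making explicit (via Corollary \ref{Component}) the reduction to the two component games that the paper handles implicitly. The paper's version has the mild advantage that Ranker's priority list is computed once, independently of Taxer's move, which makes the strategy easier to state uniformly. Both arguments share the same outer induction on $n$ and the same disposal of low rounds via Lemma \ref{UpPath}, and all of the small verifications in your write-up (the identification $\sigma_g(L_k)=\tilde{x}(k)$ and $\sigma_g(R_k)=P-\tilde{x}(k+1)$, the bound $P<2$, the inequality $k^{**}\leq k^{*}$, and the boundary cases $k\in\{1,n\}$ where one subpath is empty) check out.
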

\begin{proof}
We use induction on $n$.  Clearly Ranker wins $R^{\pm}(P_1,f)$ when $f\geq 1$, so we assume $n>1$ and that Ranker has a winning strategy on $R^{\pm}(P_m,g)$ when $m<n$ and $\sigma_g(V(P_m))<1$.  By Lemma \ref{UpPath} we may also assume Taxer declares the first round high, so Ranker must remove one vertex $v$ from the set $T$ of vertices from which Taxer removes a token.  We will visit the vertices of $P_n$ in some order $v_{p_1},\ldots ,v_{p_n}$ such that $\{v_{p_1},\ldots ,v_{p_i}\}$ induces a path (not necessarily with the vertices in that order) for $1\leq i\leq n$, and if $i$ is the least index such that $v_{p_i}\in T$, then letting $v=v_{p_i}$ leads to a winning strategy for Ranker.  We will let $V^{<i}=\left\{ v_1,\ldots ,v_{p_i-1}\right\}$ and $P^{<i}$ be the subgraph of $P_n$ induced by $V^{<i}$, and we will let $V^{>i}=\left\{ v_{p_i+1},\ldots ,v_n\right\}$ and $P^{>i}$ be the subgraph of $P_n$ induced by $V^{>i}$.  Let $g=f-1$ on $T$ and $g=f$ elsewhere, and for $1\leq i\leq n$ let $g_i=f$ on $v_{p_1},\ldots ,v_{p_i}$ and $g_i=f-1$ elsewhere.  

For $1\leq i\leq n$, if $i$ is the least index such that $v_{p_i}\in T$, and if furthermore $\sigma_{g_i}(V^{<i})<1$ and $\sigma_{g_i}(V^{>i})<1$, then we claim that setting $v=v_{p_i}$ leads to a winning strategy for Ranker.  Indeed, setting $v=v_{p_i}$ reduces the game to separate games of $R^{\pm}(P^{<i},g)$ and $R^{\pm}(P^{>i},g)$, both of which Ranker wins by our inductive hypothesis: $P^{<i}$ and $P^{>i}$ each have fewer than $n$ vertices, and $g\geq g_i$ since $v_{p_j}\notin T$ for $1\leq j\leq i-1$, yielding $\sigma_g(V^{<i})\leq\sigma_{g_i}(V^{<i})<1$ and $\sigma_g(V^{>i})\leq\sigma_{g_i}(V^{>i})<1$.
%\[\sigma_g(V^{<i})\leq\sigma_{g_i}(V^{<i})<1 \hspace{.2in} \mathrm{ and } \hspace{.2in} \sigma_g(V^{>i})\leq\sigma_{g_i}(V^{>i})<1.\]

We construct our ordering $v_{p_1},\ldots ,v_{p_n}$ of $V(P_n)$ inductively.  Select $p_1$ as the least index such that $\sigma_f(V^{<1}\cup\{v_{p_1}\})$ reaches at least $1/2$.  Since $g_1\geq f-1$ we have $\sigma_{g_1}(V^{<1})\leq 2\sigma_f(V^{<1})<1$ and $\sigma_{g_1}(V^{>1})\leq 2\sigma_f(V^{<1})<1$.
%\[\sigma_{g_1}(P^{<1})\leq 2\sigma_f(P^{<1})<1 \hspace{.2in} \mathrm{ and } \hspace{.2in} \sigma_{g_1}(P^{>1})\leq 2\sigma_f(P^{<1})<1.\]

Now assume that $p_1,\ldots ,p_k$ have been found such that $\left\{v_{p_1},\ldots ,v_{p_k} \right\}$ induces a path and such that $\sigma_{g_i}(V^{<k})<1$ and $\sigma_{g_i}(V^{>k})<1$ for $1\leq i\leq k$.  Let $P_n-\left\{ v_{p_1},\ldots ,v_{p_k}\right\}$ consist of the paths induced by $\left\{ v_1,\ldots ,v_s\right\}$ and $\left\{ v_t,\ldots ,v_n\right\}$, where we set $s=0$ or $t=n+1$ if $v_1$ or $v_n$ is in $\left\{ v_{p_1},\ldots ,v_{p_k}\right\}$, respectively.  If $s\geq 1$ and we let $p_{k+1}=s$ then $\sigma_g(V^{<k+1})<\sigma_g(V^{<k})<1$, and if $t\leq n$ and we let $p_{k+1}=t$ then $\sigma_g(V^{>k+1})<\sigma_g(V^{>k})<1$.  

As $\sum ^{s}_{i=1}2^{1-f(v_i)}+2\sum ^{t-1}_{i=s+1}2^{-f(v_i)}+\sum ^{n}_{i=t}2^{1-f(v_i)}=2\sigma_f(V(P_n)) < 2$, at least one of the following holds: $s\geq 1$ and $\sum ^{t-1}_{i=s+1}2^{-f(v_i)}+\sum ^{n}_{i=t}2^{1-f(v_i)}< 1$, or $t\leq n$ and $\sum ^{s}_{i=1}2^{1-f(v_i)}+\sum ^{t-1}_{i=s+1}2^{-f(v_i)}< 1$.  If the former holds then letting $p_{k+1}=s$ results in $\sigma_{g_{k+1}}(V^{>k+1})< 1$, and if the latter holds then letting $p_{k+1}=t$ results in $\sigma_{g_{k+1}}(V^{<k+1})< 1$.  Thus Ranker can win.
\end{proof}
Naturally we want to see what happens when $\sigma_f{V(P_n)}\geq 1$.  To show the sharpness of Theorem \ref{Path}, we exhibit a function $f$ such that $\sigma_f(V(P_n))=1$ but $P_n$ is not even $f$-rankable, much less on-line $f$-list rankable.
\begin{prop}\label{PathSharp}
For $n\geq 1$, there is a function $f$ such that $\sigma_f(V(P_n))=1$ and $P_n$ is not $f$-rankable.
\end{prop}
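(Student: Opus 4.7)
The plan is to exhibit an explicit $f$ with $\sigma_f(V(P_n))=1$ for which $P_n$ fails to be $f$-rankable. Write $n=2^k+s$ where $k=\lfloor\log_2 n\rfloor$ and $s\in\{0,1,\ldots,2^k-1\}$, and define
\[
f(v_i)=\begin{cases} k+1 & \text{if } 1\le i\le s \text{ or } n-s+1\le i\le n,\\ k & \text{otherwise,}\end{cases}
\]
so the outermost $s$ vertices on each side (the \emph{endpoint regions}) carry $f$-value $k+1$ and the middle $2^k-s$ vertices carry $f$-value $k$. A direct computation gives
\[
\sigma_f(V(P_n))=2s\cdot 2^{-(k+1)}+(2^k-s)\cdot 2^{-k}=\tfrac{s}{2^k}+1-\tfrac{s}{2^k}=1.
\]
(The case $n=1$ is degenerate: $k=s=0$ forces $f(v_1)=0$, so $P_1$ has no available label at all.)

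Suppose toward contradiction that $P_n$ has an $f$-ranking $\alpha$. Every label used is at most $k+1$, and I would first observe that the label $k+1$ can appear on at most one vertex: any two vertices bearing the maximum label would be joined by a path containing no strictly larger label, violating the ranking condition. This splits the analysis into two cases.

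If $k+1$ is never used, then $\alpha$ is a $k$-ranking of $P_n$; but $\rho(P_n)=\lceil\log_2(n+1)\rceil=k+1$ (since $2^k\le n<2^{k+1}$), a contradiction. If $k+1$ is used at a unique vertex $v_j$, then $f(v_j)=k+1$ forces $v_j$ to lie in an endpoint region, i.e., $j\le s$ or $j\ge n-s+1$. Since $v_j$ carries the unique maximum label, removing it splits $P_n$ into $P_{j-1}$ and $P_{n-j}$, each of which must be $k$-rankable; by the same $\rho$ formula this requires $j-1\le 2^k-1$ and $n-j\le 2^k-1$, equivalently $s+1\le j\le 2^k$. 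But this range is exactly the middle region, disjoint from the endpoint regions, contradicting the location of $v_j$.

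The main obstacle is guessing the construction itself; after that the verification of $\sigma_f=1$ and the two-case dissection using the classical formula $\rho(P_m)=\lceil\log_2(m+1)\rceil$ are essentially automatic.
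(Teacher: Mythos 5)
Your proof is correct and follows essentially the same route as the paper: the construction (value $k$ on the middle $2^k-s$ vertices, $k+1$ on the outer $s$ on each side, where $k=\lfloor\log_2 n\rfloor$ and $s=n-2^k$) is identical to the paper's, and the verification via uniqueness of the top label plus $\rho(P_m)=\lceil\log_2(m+1)\rceil$ on the two remaining subpaths is the same argument, just written with both cases made explicit. No issues.
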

\begin{proof}
Fix $n$, and set $k=\left\lfloor\log n\right\rfloor$.  Define $f(v_i)=k$ if $n-2^k<i\leq 2^k$ and $f(v_i)=k+1$ otherwise.  If $n=2^k$, then $P_n$ is not $f$-rankable because $\rho(P_n)=k+1$.  If $n>2^k$, then only one vertex $v_i$ can be labeled $k+1$, and $i$ must satisfy $1\leq i\leq n-2^k$ or $2^k<i\leq n$.  Thus one component of $P_n-v_i$ is a path on at least $2^k$ vertices which must be given a $k$-ranking, which is impossible.
\end{proof}
From Proposition \ref{PathSharp} it may seem hopeful that a perfect converse of Theorem \ref{Path} holds, but unfortunately one does not.
\begin{prop}\label{PathCounterex}
For $n\geq 4$, there is a function $f$ such that $\sigma_f(V(P_n))=1$ and Ranker wins $R^{\pm}(P_n,f)$.
\end{prop}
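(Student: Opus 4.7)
The plan is to exhibit, for each $n\geq 4$, a specific $f$ with $\sigma_f(V(P_n))=1$ and describe a winning strategy for Ranker in $R^{\pm}(P_n,f)$. I focus on the base case $n=4$ in detail, and sketch how the construction and strategy generalize.

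For $n=4$, take $f=(3,1,3,2)$; then $\sigma_f=\tfrac{1}{8}+\tfrac{1}{2}+\tfrac{1}{8}+\tfrac{1}{4}=1$. Ranker's strategy is to choose, in each round, an $R$ such that the reduced graph has $\sigma<1$, whereupon Theorem \ref{Path} (together with Corollary \ref{Component} in the disconnected case) finishes the argument. Using the identity $\sigma'=\sigma_f-\sigma_f(R)+\sigma_f(T\setminus R)$, in a low round Ranker takes $R$ to be a maximum-$\sigma_f$-weight independent subset of $T$, which yields $\sigma'<1$ as long as $\sigma_f(R)>\sigma_f(T\setminus R)$; since adjacent vertices of $P_4$ carry distinct $f$-values under $(3,1,3,2)$, the maximum-weight independent subset of any $T$ strictly outweighs its complement. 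In a high round, if $v_2\in T$ Ranker removes $v_2$, splitting $P_4$ into $\{v_1\}$ and the edge $v_3v_4$, each with $\sigma<1$; otherwise Ranker removes $v_4$ rather than $v_1$, which reduces to a $P_3$ with $f=(2,1,3)$ or $(3,1,2)$ and $\sigma=\tfrac{7}{8}<1$, and in particular avoids the losing $P_3$ configurations with $f\in\{(1,3,1),(1,2,2),(2,1,2),(3,1,1)\}$. A finite case check over all Taxer first moves verifies the strategy.

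For general $n\geq 5$, I would extend this construction by keeping $v_2$ as the unique single-token vertex and assigning the remaining $v_i$ (for $i\geq 4$) $f$-values making $\sigma_{f_n}=1$ while avoiding equal $f$-values on adjacent vertices. Ranker's max-weight-independent-set strategy then still yields $\sigma'<1$ after each low round, and the high-round strategy splits off $\{v_1\}$ by removing $v_2$ whenever possible. Rounds where $v_2\notin T$ either cause a strict drop in $\sigma$ or reduce $P_n$ to a smaller member of the same family, handled by strong induction on $n$.

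The main obstacle is the delicate case analysis needed to rule out reductions into small unwinnable configurations, chief among them $P_2$ with $f=(1,1)$ and $P_3$ with $f\in\{(1,3,1),(2,1,2),(1,2,2),(3,1,1)\}$, each verifiable by direct enumeration. This is also why the naive alternative $f=(1,2,3,\ldots,n-1,n-1)$ (which also has $\sigma_f=1$) fails even at $n=4$: a low round with $T=\{v_3,v_4\}$ forces reduction into the losing $P_3$ with $f=(1,2,2)$. The specific choice $f=(3,1,3,2)$ and its analogues for larger $n$ must therefore be engineered to navigate around these pitfalls.
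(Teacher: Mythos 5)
Your $n=4$ case is essentially the paper's: $f=(3,1,3,2)$ is the reversal of the function $(2,3,1,3)$ used there, and a finite check does confirm that every low round strictly drops $\sigma$ below $1$ and every high round splits $P_4$ into pieces with $\sigma<1$. The gap is the passage to general $n$, and it is not just a matter of writing out details. Since $n$ distinct positive integer values give $\sigma_f(V(P_n))\leq 1-2^{-n}<1$, any $f$ with $\sigma_f=1$ must repeat a value, and your guiding principle --- that distinct $f$-values on adjacent vertices force the maximum-weight independent subset $R$ of $T$ to satisfy $\sigma_f(R)>\sigma_f(T\setminus R)$ --- is false. On $P_3$ with $f=(2,1,2)$ the sets $\{v_2\}$ and $\{v_1,v_3\}$ tie, and the same tie occurs inside longer paths meeting all your stated conditions: for $f=(2,1,4,3,4)$ on $P_5$ (which has $\sigma_f=1$, $v_2$ as the unique single-token vertex, and no equal values on adjacent vertices), the low round $T=\{v_3,v_4,v_5\}$ admits no independent $R\subseteq T$ with $\sigma_f(R)>\sigma_f(T\setminus R)$, so no response pushes $\sigma$ below $1$; the best choices of $R$ land in $(2,1,2)$ on $P_3$ (a Taxer win) or in $(2,1,3,3)$ on $P_4$, which has $\sigma=1$ and equal values on adjacent vertices, hence lies outside your inductive family. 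Moreover the family as you describe it is empty for $n=5$: with $(3,1,3)$ fixed on $v_1,v_2,v_3$ you would need two distinct powers of $2$ summing to $1/4$. The high-round rule has a parallel problem: ``remove $v_2$ if possible'' leaves unspecified which vertex of $T$ to delete when $v_2\notin T$ and $T$ is spread along a long path, and a wrong cut leaves a subpath with $\sigma\geq 1$.

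The paper confronts both issues directly. It takes $f=(2,3,1,4,5,\ldots,n-1,n-1)$, deliberately placing the forced repetition on the adjacent pair $v_{n-1},v_n$; the one low-round move that ties, $T=\{v_{n-1},v_n\}$, is handled separately by removing $v_n$, which reduces to the same function on $P_{n-1}$, and every other low round strictly drops $\sigma$ below $1$. For high rounds it reuses the ordering machinery from the proof of Theorem \ref{Path}, visiting the vertices in the order $v_3,v_2,v_1,v_4,\ldots,v_n$ and deleting the first vertex of $T$ encountered, with the quantities $\sigma_{g_i}(V^{<i})$ and $\sigma_{g_i}(V^{>i})$ certifying that both resulting subpaths are winnable. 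To repair your argument you would need either to adopt that machinery or to replace the false independent-set claim with an analysis of the tied moves your construction necessarily creates.
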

\begin{proof}
Let $f(v_1)=2$, $f(v_2)=3$, $f(v_3)=1$, $f(v_i)=i$ for $4\leq i\leq n-1$, and $f(v_n)=n-1$.  Then $\sigma_f(V(P_n))=1$.  Suppose the first round is low.  If $n\geq 5$ and Taxer removes tokens from $v_{n-1}$ and $v_n$, then Ranker can remove $v_n$ to reduce the game to $R^{\pm}(P_{n-1},f')$, where $f'(v_{n-1})=n-2$ and $f'=f$ elsewhere, so we may assume $n=4$ or $T\neq\{v_{n-1},v_n\}$.  If Ranker removes an independent set $R\subseteq T$ that maximizes $\sigma_f(R)$ then the game will reduce to $R^{\pm}(P_{n-|R|},g)$ where $\sigma_g(V(P_{n-|R|}))<1$, so Ranker wins by Theorem \ref{Path}.  

Now suppose the first round is high.  We continue use of the method and notation from the proof of Theorem \ref{Path}, visiting the vertices of $P_n$ in some sequence $v_{p_1},\ldots ,v_{p_n}$.  Let $p_1=3$, $p_2=2$, $p_3=1$, and $p_i=i$ for $4\leq i\leq n$.  For $1\leq i\leq n$, if $i$ is the least index such that $v_{p_i}\in T$, then $\sigma_{g_i}(V^{<i})<1$ and $\sigma_{g_i}(V^{>i})<1$.  By Theorem \ref{Path}, Ranker can win $R^{\pm}(P_n,f)$ by removing $v_{p_i}$ in the first round and then winning $R^{\pm}(P^{<i},g_i)$ and $R^{\pm}(P^{>i},g_i)$.
\end{proof}
\begin{cor}\label{Sigma}
For $n\geq 5$, there is a function $f$ such that $\sigma_f(V(P_n))>1$ and Ranker wins $R^{\pm}(P_n,f)$.
\end{cor}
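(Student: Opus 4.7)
The plan is to bootstrap from Proposition \ref{PathCounterex} by extending its example on $P_{n-1}$ to one on $P_n$ via Lemma \ref{Distinct}. Since $n\geq 5$ we have $n-1\geq 4$, so Proposition \ref{PathCounterex} supplies a function $f'$ on $V(P_{n-1})$ with $\sigma_{f'}(V(P_{n-1}))=1$ such that Ranker wins $R^{\pm}(P_{n-1},f')$. Extend $f'$ to a function $f$ on $V(P_n)$ by setting $f(v_i)=f'(v_i)$ for $1\leq i\leq n-1$ and $f(v_n)=n$. Then
\[
\sigma_f(V(P_n)) \;=\; \sigma_{f'}(V(P_{n-1}))+2^{-n} \;=\; 1+2^{-n} \;>\; 1,
\]
which establishes the required inequality.

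To see that Ranker wins $R^{\pm}(P_n,f)$, I would apply Lemma \ref{Distinct} to $G=P_n$ with the natural ordering $v_1,\ldots,v_n$ and $k=n-1$. The induced subgraph on $\{v_1,\ldots,v_{n-1}\}$ is precisely $P_{n-1}$; the only component of $G-V(G')$ is the single vertex $v_n$, whose neighborhood $\{v_{n-1}\}$ in $G'$ is trivially a clique; and $f(v_n)=n\geq n$ satisfies the lemma's hypothesis for $k<i\leq n$. Since Ranker wins $R^{\pm}(P_{n-1},f'\!)$ by Proposition \ref{PathCounterex}, the lemma promotes this into a winning strategy on $R^{\pm}(P_n,f)$, completing the proof.

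There is no real obstacle: the strategy is entirely mechanical once Proposition \ref{PathCounterex} and Lemma \ref{Distinct} are in hand. The only things to verify are the $\sigma$-arithmetic above (immediate from the definition) and that the hypotheses of Lemma \ref{Distinct} are met (each trivial). The single restriction $n\geq 5$ enters exactly to make Proposition \ref{PathCounterex} applicable to $P_{n-1}$.
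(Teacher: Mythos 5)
Your proof is correct and follows essentially the same route as the paper: both combine Proposition \ref{PathCounterex} with Lemma \ref{Distinct}, the only (immaterial) difference being that the paper anchors at the $P_4$ example and appends $n-4$ vertices with $f(v_i)=i$, whereas you anchor at the $P_{n-1}$ example and append a single vertex with $f(v_n)=n$. All hypotheses of Lemma \ref{Distinct} are verified correctly and the $\sigma$-computation is right.
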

\begin{proof}
By the Proposition \ref{PathCounterex}, if $f(v_1)=2$, $f(v_2)=3$, $f(v_3)=1$, and $f(v_4)=3$, then Ranker wins $R^{\pm}(P_4,f)$.  Let $f(v_i)=i$ for $5\leq i\leq n$, so $\sigma_f(V(P_n))=17/16-2^{-n}$.  By Lemma \ref{Distinct}, Ranker wins $R^{\pm}(P_n,f)$.
\end{proof}
The natural remaining question concerns what happens when $\sigma_f(V(P_n))$ is large.
\begin{con}
There exists a real number $\alpha$ such that for any positive integer $n$, if $\sigma_f(V(P_n))>\alpha$, then Taxer wins $R^{\pm}(P_n,f)$. 
\end{con}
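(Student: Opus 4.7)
The plan is to design an adversarial Taxer strategy driven by a potential $\Phi(P,f)$ refining $\sigma_f$, and to show that whenever $\Phi(P_n,f) > \alpha$ for a fixed constant $\alpha$, Taxer can maintain $\Phi > \alpha$ on at least one component after every round. Combined with Corollary~\ref{Component} and the fact that each vertex's token count strictly decreases whenever it is selected but not removed, maintaining such a $\Phi$ on a nonempty component eventually forces the bankruptcy of some vertex, handing victory to Taxer. If $\Phi$ is chosen so that $\Phi \geq \sigma_f$ pointwise, then $\sigma_f > \alpha$ activates the invariant, yielding the conjecture.

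The key steps I would carry out are the following. (i) Analyze how $\sigma_f$ evolves under the two round types. In a low round where Ranker responds to Taxer's $T$ with an independent $R \subseteq T$, the new weighted sum equals $\sigma_f(V) + \sigma_f(T \setminus R) - \sigma_f(R)$, and Ranker deflates this by taking a maximum-weight independent set in $T$. In a high round where Ranker removes a single $v \in T$, the path splits and the combined weighted sum becomes $\sigma_f(V) - 2^{-f(v)} + \sigma_f(T \setminus \{v\})$, which is inflationary but distributes the new mass across two subpaths. (ii) Construct $\Phi$ to capture both global density and local concentration of small $f$-values --- for example, as the supremum of $\sigma_f$ over contiguous subpaths with additive corrections at boundaries. (iii) Prove a one-step preservation lemma: whenever $\Phi > \alpha$, Taxer has a choice of round type and $T$ such that every legal Ranker response leaves some resulting component $C$ with $\Phi(C, g) > \alpha$. (iv) Iterate until a vertex is bankrupted.

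The main obstacle is step (iii), the preservation lemma. Ranker's freedom is considerable: the deflationary gains from an optimal independent set in low rounds must be dominated by the inflation Taxer can force in high rounds, and the edge-completion triggered by low-round removals can restructure the path in ways a naive $\Phi$ will not capture. Corollary~\ref{Sigma} rules out $\alpha = 1$, so the preservation lemma cannot be established with $\Phi = \sigma_f$ alone; the right refinement is exactly what makes the argument delicate. I would expect $\alpha$ to be a small constant --- with verification on small $n$ guiding the potential's design --- but proving any finite threshold works requires precisely the interplay between low and high rounds that $\Phi$ is meant to encode.
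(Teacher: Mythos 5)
This statement is not a theorem of the paper: it is stated as an open conjecture, with no proof given (the paper proves only the one-sided results around it, namely Theorem \ref{Path}, Proposition \ref{PathSharp}, and Corollary \ref{Sigma}). So there is nothing in the paper your attempt could match, and your proposal should be judged on whether it constitutes a proof on its own. It does not. Your step (i) is correct and reproduces the bookkeeping already in Lemma \ref{UpPath} (in a low round the total becomes $\sigma_f(V)+\sigma_f(T)-2\sigma_f(R)$, in a high round $\sigma_f(V)+\sigma_f(T)-2^{1-f(v)}$), and your observation that Corollary \ref{Sigma} forbids $\Phi=\sigma_f$ with $\alpha=1$ is the right sanity check. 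But steps (ii) and (iii) are where the entire mathematical content lives, and you explicitly leave them unexecuted: no candidate $\Phi$ is written down, and no preservation lemma is proved or even precisely formulated. As it stands the proposal is a research plan, not a proof.

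Two further gaps deserve naming even granting the plan. First, step (iv) is not automatic: Taxer wins only if some vertex is still present in $G_{i+1}$ with zero tokens, and merely maintaining $\Phi>\alpha$ on a nonempty component forever does not by itself produce such a vertex --- Ranker might keep every surviving vertex's token count positive while the component shrinks to empty. You need $\Phi$ to additionally certify that some vertex is driven to zero tokens before it can be removed, which is a separate invariant from ``the potential stays large.'' Second, your invariant is phrased as ``some resulting component has $\Phi>\alpha$,'' but in a low round Ranker's removal merges the two neighbors of each removed vertex into a single shorter path; the component structure only branches in high rounds, and there Ranker chooses the cut vertex, so Taxer cannot control which side inherits the concentrated weight. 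Any successful $\Phi$ must be robust to Ranker splitting the path at the worst possible point for Taxer, and your sketch gives no reason to believe a finite $\alpha$ survives this. The conjecture remains open.
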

\section{Cycles} 
\label{sec:cycles}
We now turn our attention to cycles, for which the following results are proved using many of the techniques and results of Section \ref{sec:paths}.  To prove our second main result, that $\rho ^{\pm}_{\ell}(C_n)=1+\left\lceil \log n\right\rceil = \rho(C_n)$, we once again prove a stronger statement, though we begin with a technical lemma.  

For a graph $G$, suppose $X=\{x_1,\ldots,x_n\}\subseteq V(G)$, where $f:X\rightarrow\mathbb{N}$ and $f(x_1)\leq\cdots\leq f(x_n)$.  Set $k=n$ if $f(x_1)<\cdots <f(x_n)$ and otherwise let $k$ satisfy $f(x_1)<\cdots <f(x_k)=f(x_{k+1})$.  For $X'=\{x_i\in X:1\leq i<k\}$ and $X''=\{x_i\in X:k<i\leq n\}$, define $\tau_f(X)=\sigma_f(X')+2\sigma_f(X'')$ (recalling that $\sigma_f(V)=\sum_{v\in V}2^{-f(v)}$).
\begin{lem}\label{CycleDown}
If $\tau_f(V(C_n))<1$, then Ranker wins $R^{\pm}(C_n,f)$ whenever Taxer declares the first round high.
\end{lem}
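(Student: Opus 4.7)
The plan is to reduce the game to a path after the first round: since $C_n$ is connected and the first round is high, Ranker may delete at most one vertex of $T$, turning the cycle into a path $P_{n-1}$. If Ranker can choose $v^*\in T$ so that the resulting token function $g$ satisfies $\sigma_g(V(P_{n-1}))<1$, then Theorem \ref{Path} supplies a winning continuation. I would have Ranker remove the vertex $v^*\in T$ minimizing $f(v^*)$, breaking ties arbitrarily. Setting $m=f(v^*)$ and writing $g=f-1$ on $T-\{v^*\}$ with $g=f$ elsewhere, a direct expansion gives
\[
\sigma_g(V(P_{n-1})) \;=\; \sigma_f(V(C_n)) + \sigma_f(T) - 2\cdot 2^{-m}.
\]

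The main technical step is to show that this quantity is at most $\tau_f(V(C_n))$. Since $v^*$ is a minimizer of $f$ on $T$, the set $T$ is contained in $\{v:f(v)\ge m\}$, so Taxer's worst choice (for a fixed value of $m$) is to take $T=\{v:f(v)\ge m\}$. Let $c_1<c_2<\cdots<c_r$ be the distinct values of $f$ on $V(C_n)$, with multiplicities $t_1,\ldots,t_r$, and define
\[
h(c_\ell) \;=\; \sigma_f(V(C_n)) + \sigma_f(\{v:f(v)\ge c_\ell\}) - 2\cdot 2^{-c_\ell}.
\]
Computing the increment yields $h(c_{\ell+1})-h(c_\ell) = (2-t_\ell)\,2^{-c_\ell} - 2\cdot 2^{-c_{\ell+1}}$. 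This is non-negative when $t_\ell=1$ (using $c_{\ell+1}\ge c_\ell+1$ for integer values) and strictly negative when $t_\ell\ge 2$. Hence $h$ peaks at the first level $c_{\ell^*}$ with $t_{\ell^*}\ge 2$, which corresponds exactly to $c_{\ell^*}=f(x_k)$ in the sorting preceding the lemma (or at $c_r$ if all $f$-values are distinct, in which case $k=n$). A direct substitution, splitting $V(C_n)=X'\sqcup\{x_k\}\sqcup X''$ in the tied case or $X'\sqcup\{x_n\}$ in the all-distinct case, yields $h(c_{\ell^*})=\sigma_f(X')+2\sigma_f(X'')=\tau_f(V(C_n))$.

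Combining these ingredients, $\sigma_g(V(P_{n-1}))\le\tau_f(V(C_n))<1$, so Theorem \ref{Path} hands Ranker a winning strategy on $R^\pm(P_{n-1},g)$, completing the proof. The main obstacle is the monotonicity analysis of $h$, but this reduces to a short case split on the value of $t_\ell$. A sanity check worth including is that the hypothesis $\tau_f<1$ rules out two vertices of $f$-value $1$ (which would force $\tau_f\ge 2\cdot 2^{-1}=1$), so Ranker's removal of a single $v^*\in T$ is always legal: no other vertex of $T$ is left with zero tokens after the first round.
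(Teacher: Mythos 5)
Your overall strategy is exactly the paper's: after a high first round Ranker must delete a single vertex of $T$, you have Ranker delete a vertex of $T$ with the fewest tokens, you observe that for a fixed minimum value $m$ Taxer does worst by taking the full threshold set $\{v:f(v)\ge m\}$, and you then maximize over thresholds and identify the maximum with $\tau_f(V(C_n))$. The identity $\sigma_g(V(P_{n-1}))=\sigma_f(V(C_n))+\sigma_f(T)-2\cdot 2^{-m}$ and the evaluation $h(c_{\ell^*})=\tau_f(V(C_n))$ both check out, and the appeal to Theorem \ref{Path} is the same as in the paper. (Your final legality remark is harmless but not needed as a separate case: if some $v\in T-\{v^*\}$ had $f(v)=1$ then $g(v)=0$ and $\sigma_g(V(P_{n-1}))\ge 1$, so the bound $\sigma_g(V(P_{n-1}))<1$ already rules this out.)

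The one step whose justification is incomplete is the sentence ``Hence $h$ peaks at the first level $c_{\ell^*}$ with $t_{\ell^*}\ge 2$.'' The sign analysis of the increments only shows that $h$ is non-decreasing up to $\ell^*$ and drops at the step from $\ell^*$ to $\ell^*+1$; for $\ell>\ell^*$ the multiplicities can again equal $1$, so later increments can be strictly positive (e.g.\ $f$-values $2,2,3,10$ give a positive increment from $c_2=3$ to $c_3=10$), and from the increment signs alone $h$ could in principle climb back above $h(c_{\ell^*})$. It does not, but that needs one more line: telescoping gives, for $j>\ell^*$,
\[
h(c_j)-h(c_{\ell^*})=(2-t_{\ell^*})2^{-c_{\ell^*}}-\sum_{\ell=\ell^*+1}^{j-1}t_\ell\, 2^{-c_\ell}-2\cdot 2^{-c_j}<0,
\]
since $t_{\ell^*}\ge 2$ makes the first term nonpositive and the last term is strictly negative. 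With that line added your proof is complete. The paper reaches the same conclusion by a pair of exchange arguments on Taxer's set $T$ (adding a tied pair below the threshold can only help Taxer; raising the threshold from $j<k$ up to $k$ cannot hurt Taxer), which is the same optimization phrased differently.
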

\begin{proof}
Recall that $R^{\pm}(C_n,f)$ starts with each $v\in V(C_n)$ having $f(v)$ tokens.  If Taxer begins play by declaring the first round high, then Taxer takes one token from each element of a nonempty set $T\subseteq V(C_n)$, and Ranker responds by choosing some $u\in T$ to delete from $C_n$ to get a path on $n-1$ vertices.  Set $Y=V(C_n)-T$ and $Z=T-\{u\}$, and define $\tau '=\sigma_f(Y)+2\sigma_f(Z)$.  Thus after the first round the game is reduced to $R^{\pm}(P_{n-1},g)$, where $V(P_{n-1})=V(C_n)-\{u\}$ and $g(v)=f(v)$ for $v\in Y$ and $g(v)=f(v)-1$ for $v\in Z$.  Note that $\sigma_g(V(P_{n-1}))=\tau'$, so by Theorem \ref{Path} Ranker wins $R^{\pm}(C_n,f)$ if Ranker can always choose some $u\in T$ so that $\tau'<1$.  

Letting $V(C_n)=\{v_1\ldots,v_n\}$, named so that $f(v_1)\leq\cdots\leq f(v_n)$, we note that if $f(v_i)<f(v_{i+1})$ for $1\leq i<n$, then Ranker wins $R^{\pm}(C_n,f)$ by Lemma \ref{Distinct}.  Thus we may assume $f$ is not injective, and let $k$ be the least index such that $f(v_k)=f(v_{k+1})$.  We complete the proof by showing that the minimum value Ranker can make $\tau'$ (given Taxer's choice of $T$) is maximized when $T=\{v_k,\ldots,v_n\}$.  Indeed, for that choice of $T$, Ranker can set $u=v_k$ to get $\tau '=\tau_f(V(C_n))<1$, so for any other $T$ Ranker could choose some $u\in T$ so that $\tau'<1$.  For the rest of the proof, assume Taxer has chosen $T$ to maximize the minimum value Ranker is able to make $\tau'$ through the selection of $u$.

For any choice of $T$, Ranker minimizes $\tau '$ by selecting $u$ as the vertex in $T$ having the fewest tokens.  Thus we may assume Taxer selects $T=\{v_j,\ldots ,v_n\}$ for some $j$ satisfying either $j=1$ or $f(v_{j-1})<f(v_j)$, since elements of $Z$ get counted double in $\tau '$.  Given this choice of $T$, Ranker will select $u=v_j$ to get $\tau '=\sigma_f(Y)+2\sigma_f(Z)$ for $Y=\{v_1,\ldots,v_{j-1}\}$ and $Z=\{v_{j+1},\ldots ,v_n\}$.  We show $\tau'$ is minimized when $j=k$.

We first show that $j\leq k$ by showing that if $f(v_i)=f(v_{i+1})<f(v_j)$, then Taxer could increase $\tau '$ by $2^{1-f(v_j)}$ by adding $v_i$ and $v_{i+1}$ to $T$.  Indeed, Ranker would choose $u=v_i$, with $\tau '$ losing $2^{1-f(v_i)}$ by removing $v_i$ and $v_{i+1}$ from $Y$ but gaining $2^{1-f(v_i)}+2^{1-f(v_j)}$ by adding $v_{i+1}$ and $v_j$ to $Z$.

We now show that if $j<k$, then Taxer would not decrease $\tau '$ by removing $v_j,\ldots,v_{k-1}$ from $T$.  Indeed, $\tau '$ would gain $2^{-f(v_j)}$ by adding $v_j$ to $Y$ and only lose $2^{1-f(v_k)}+\sum ^{k-1}_{d=j+1}2^{-f(v_d)}$ by removing $v_k$ from $Z$ and switching $v_{j+1},\ldots ,v_{k-1}$ from $Z$ to $Y$, and we have $2^{1-f(v_k)}+\sum ^{k-1}_{d=j+1}2^{-f(v_d)}\leq 2^{1-f(v_j)-k+j}+\sum ^{k-j-1}_{d=1}2^{-f(v_j)-d}=2^{-f(v_j)}(2^{-k+j}+\sum ^{k-j}_{d=1}2^{-d})=2^{-f(v_j)}$.
\end{proof}
\begin{cor}\label{CycleDownCor}
If $\sigma_f(V(C_n))<1/2+2^{-\left\lceil \log n\right\rceil}$, then Ranker wins $R^{\pm}(C_n,f)$ whenever Taxer declares the first round high.
\end{cor}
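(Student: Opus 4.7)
The plan is to deduce the corollary directly from Lemma \ref{CycleDown} by showing that the hypothesis $\sigma_f(V(C_n)) < 1/2 + 2^{-L}$, where $L=\lceil\log n\rceil$, forces $\tau_f(V(C_n)) < 1$. Label the vertices of $C_n$ as $v_1,\ldots,v_n$ with $f(v_1)\le\cdots\le f(v_n)$. If $f$ is injective on $V(C_n)$, then $k=n$ in the definition of $\tau_f$, so $V''=\emptyset$ and $\tau_f(V(C_n)) = \sigma_f(V') < \sigma_f(V(C_n)) < 1$, and Lemma \ref{CycleDown} applies. Otherwise, let $k$ be the least index with $f(v_k)=f(v_{k+1})$ and set $m=f(v_k)$.

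I would produce two upper bounds for $\tau_f$. Since $f(v_1),\ldots,f(v_{k-1})$ are $k-1$ distinct positive integers less than $m$, the sum $\sigma_f(V')$ is minimized by the choice $f(v_i)=m-k+i$, giving $\sigma_f(V')\ge 2^{k-m}-2^{1-m}$; substituting this into the identity $\tau_f = 2\sigma_f(V(C_n))-\sigma_f(V')-2\cdot 2^{-m}$ yields
\[
\tau_f(V(C_n)) \le 2\sigma_f(V(C_n)) - 2^{k-m}. \qquad (\mathrm{A})
\]
On the other hand, $f(v_i)\ge m$ for $i>k$ gives $\sigma_f(V'')\le(n-k)\cdot 2^{-m}$, and plugging this into $\tau_f = \sigma_f(V(C_n)) + \sigma_f(V'') - 2^{-m}$ yields
\[
\tau_f(V(C_n)) \le \sigma_f(V(C_n)) + (n-k-1)\cdot 2^{-m}. \qquad (\mathrm{B})
\]

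I then split on the size of $m-k$. If $m-k\le L-1$, then $2^{k-m}\ge 2^{1-L}$ and (A) combined with the hypothesis give $\tau_f < 2(1/2+2^{-L}) - 2^{1-L} = 1$. If $m-k\ge L$, then using $n\le 2^L$ and $m\ge k+L$ I get $(n-k-1)\cdot 2^{-m} \le (2^L-k-1)\cdot 2^{-k-L}$, which I need to show is at most $1/2-2^{-L}$; clearing denominators, this is equivalent to the elementary inequality $2^L+2^k\le 2^{k+L-1}+k+1$, an equality at $k=1$ and routine to verify for $k\ge 2$ given $L\ge 2$ (which holds since $n\ge 3$). Then (B) and the strict hypothesis yield $\tau_f < (1/2+2^{-L}) + (1/2-2^{-L}) = 1$. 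The only real friction I foresee is checking that final arithmetic inequality and noting that the strict hypothesis $\sigma_f<1/2+2^{-L}$ is exactly what is needed to push through the boundary case $k=1$, where the bound on $(n-k-1)\cdot 2^{-m}$ is tight.
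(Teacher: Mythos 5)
Your proposal is correct and follows essentially the same route as the paper: both reduce the corollary to checking $\tau_f(V(C_n))<1$ via Lemma \ref{CycleDown} and then split into two cases according to the size of $f(v_k)$ relative to $\lceil\log n\rceil$, using the two natural algebraic expressions for $\tau_f$ in terms of $\sigma_f$. Your bounds (A) and (B) are slightly sharpened versions of the paper's estimates (the paper uses $\sigma_f(Y)\geq 0$ and $|Z|\leq n-1$ where you keep track of $k$), with the case threshold shifted accordingly, but the argument is the same in substance and your arithmetic checks out.
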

\begin{proof}
By Lemma \ref{CycleDown} it suffices to show $\tau_f(V(C_n))<1$.  Assume without loss of generality that $f(v_1)<\cdots<f(v_k)=f(v_{k+1})\leq\cdots\leq f(v_n)$.  Set $Y=\{v_1,\ldots,v_{k-1}\}$ and $Z=\{v_{k+1},\ldots,v_n\}$.  If $f(v_k)\leq\left\lceil \log n\right\rceil$, then $\tau_f(V(C_n))\leq 2\sigma_f(V(C_n))-2^{1-f(v_k)}<1+2^{1-\left\lceil\log n\right\rceil}-2^{1-\left\lceil \log n\right\rceil}=1$.  If $f(v_k)\geq 1+\left\lceil \log n\right\rceil$, then we have $\tau_f(V(C_n))=\sigma_f(Y)+2\sigma_f(Z)=\sigma_f(V(C_n))-2^{-f(v_k)}+\sigma_f(Z)<1/2+2^{-\left\lceil \log n\right\rceil}-2^{-f(v_k)}+(n-1)2^{-f(v_k)}=1/2+2^{-\left\lceil \log n\right\rceil}+(n-2)2^{-f(v_k)}\leq 1/2+2^{-\left\lceil \log n\right\rceil}+(n-2)2^{-1-\left\lceil \log n\right\rceil}=1/2+n2^{-1-\left\lceil \log n\right\rceil}\leq 1$.
\end{proof}
\begin{thm}\label{Cycle}
If $\sigma_f(V(C_n))<1/2+2^{-\left\lceil \log n\right\rceil}$, then Ranker wins $R^{\pm}(C_n,f)$.
\end{thm}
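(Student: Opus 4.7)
The plan is to induct on $n \geq 3$. Corollary~\ref{CycleDownCor} lets us restrict attention to rounds that Taxer declares low, so suppose Taxer selects tokens from a nonempty $T \subseteq V(C_n)$ and Ranker must respond by deleting an independent set $R \subseteq T$ (while completing the neighborhood of each deleted vertex). The resulting graph $G'$ is $C_{n-|R|}$ when $n-|R|\geq 3$ and is $P_2$ when $n-|R|=2$. The new token function $g$ equals $f$ on $V(C_n)\setminus T$ and $f-1$ on $T\setminus R$, so
\[
\sigma_g(V(G')) = \sigma_f(V(C_n)) - \sigma_f(R) + \sigma_f(T \setminus R).
\]
The objective is to choose $R$ so that $\sigma_g$ lies below the inductive threshold on $C_{n-|R|}$, or below $1$ when $G' = P_2$ so that Theorem~\ref{Path} finishes.

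Ranker's strategy splits by the structure of $T$. If $T \subsetneq V(C_n)$, pick any $v^* \in V(C_n) \setminus T$; the path $C_n - v^*$ has a proper 2-coloring with classes $B, C$, both independent in $C_n$, so take $R$ to be the $\sigma_f$-heavier of $B \cap T$ and $C \cap T$. This forces $\sigma_f(R) \geq \sigma_f(T\setminus R)$, hence $\sigma_g(V(G')) \leq \sigma_f(V(C_n)) < \tfrac12 + 2^{-k}$ with $k := \lceil\log n\rceil$, and since $\lceil\log(n-|R|)\rceil \leq k$ the inductive hypothesis applies (or Theorem~\ref{Path} does if $n-|R|=2$). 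If $T = V(C_n)$ with $n$ even, $C_n$ itself is 2-colorable and we take $R$ to be the heavier color class; the same bound on $\sigma_g$ works and $C_{n/2}$ (or $P_2$ when $n=4$) has at least as large a threshold. If $T = V(C_n)$ with $n$ odd, pick $v^* \in V(C_n)$ maximizing $f$ and take $R$ to be the heavier color class of the 2-coloring of $C_n - v^*$, giving $|R| = (n-1)/2$ and $G' = C_{(n+1)/2}$ (or $P_2$ when $n=3$).

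The main obstacle is the odd case with $T = V(C_n)$: since $C_n$ is not 2-colorable, the best available bound is $\sigma_g(V(G')) \leq \sigma_f(V(C_n)) + 2^{-f(v^*)}$ rather than $\sigma_g \leq \sigma_f(V(C_n))$. The two key ingredients are that choosing $v^*$ to maximize $f$ forces $2^{-f(v^*)} \leq \sigma_f(V(C_n))/n$, and that the reduced graph is a cycle on $(n+1)/2$ vertices, which (since $n$ odd and $n \leq 2^k-1$) satisfies $\lceil\log((n+1)/2)\rceil = k-1$, giving the new threshold $\tfrac12 + 2^{1-k}$. A short calculation,
\[
\sigma_g(V(G')) \leq \sigma_f(V(C_n)) \cdot \tfrac{n+1}{n} < \bigl(\tfrac12 + 2^{-k}\bigr)\tfrac{n+1}{n} \leq \tfrac12 + 2^{1-k},
\]
where the last inequality reduces to $n \geq 2^{k-1}+1$ (which holds because $n > 2^{k-1}$ is odd), handles the cycle subcase. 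The base case $n = 3$ falls out of the same construction, which reduces to $P_2$ with $\sigma_g < 1$, so Theorem~\ref{Path} applies.
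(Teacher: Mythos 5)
Your proof is correct and follows essentially the same route as the paper's: dispose of high first rounds via Corollary \ref{CycleDownCor}, then induct on $n$ for low rounds, taking $R$ to be the $\sigma_f$-heavier of two independent sets covering $T$ (excluding a maximum-$f$ vertex when $T=V(C_n)$ and $n$ is odd). The only cosmetic difference is in that odd case, where you bound $2^{-f(v^*)}\leq\sigma_f(V(C_n))/n$ and argue multiplicatively, while the paper instead deduces $f(v^*)\geq 1+\lceil\log n\rceil$ from the hypothesis; both close the same gap of $2^{-f(v^*)}$ against the improved threshold $1/2+2^{1-\lceil\log n\rceil}$ for $C_{(n+1)/2}$.
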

\begin{proof}
By Corollary \ref{CycleDownCor} it suffices to show Ranker wins whenever Taxer declares the first round low, which we do using induction on $n$.  Let $V(C_n)=\{v_1,\ldots,v_n\}$ and $E(C_n)=\{v_iv_{i+1}:1\leq i\leq n\}$.  Note that $C_3$ is on-line $f$-list rankable when $\sigma_f(V(C_3))< 3/4$: if $f(v_1)\leq f(v_2)\leq f(v_3)$ and $\sigma_f(V(C_3))< 3/4$, then $f(v_i)\geq i$ for each $i$, so by Lemma \ref{Distinct} Ranker can win $R^{\pm}(C_3,f)$.  Now suppose Ranker wins $R^{\pm}(C_m,g)$ for $3\leq m<n$ and $\sigma_g(V(C_m))<1/2+2^{-\left\lceil \log m\right\rceil}$.

Recall that $R^{\pm}(C_n,f)$ starts with each $v_i$ having $f(v_i)$ tokens.  If Taxer begins play by declaring the first round low, then Taxer takes one token from each element of a nonempty set $T\subseteq V(C_n)$, and Ranker responds by choosing an independent set $R\subseteq T$ to remove from $C_n$, replacing each removed vertex with an edge between its neighbors to get a cycle on $n-|R|$ vertices (or an edge if $|R|=n-2$).  Let $B=\{v_{2i-1}:1\leq i\leq\left\lceil n/2\right\rceil\}$ and $C=\{v_{2i}:1\leq i\leq\left\lfloor n/2\right\rfloor\}$.

First suppose $T=V(C_n)$ and $n$ is odd, and without loss of generality assume $f(v_n)\geq f(v_i)$ for $1\leq i\leq n$.  Hence $f(v_n)\geq 1+\left\lceil\log n\right\rceil$, since otherwise we would have $\sigma_f(T)\geq (n-1)2^{-\left\lceil\log n\right\rceil}+2^{-\left\lceil\log n\right\rceil}\geq 1/2+2^{-\left\lceil \log n\right\rceil}$.  Set $R=B-\{v_n\}$ if $\sigma_f(B-\{v_n\})\geq\sigma_f(C)$ and $R=C$ otherwise, so $R$ is independent and the game reduces to $R^{\pm}(C_{(n+1)/2},f-1)$, where $V(C_{(n+1)/2})=T-R$ and $2\sigma_f(R)\geq\sigma_f(T)-2^{-f(v_n)}$.  By our inductive hypothesis Ranker wins, due to the  fact that $\sigma_{f-1}(T-R)=2\sigma_f(T)-2\sigma_f(R)\leq \sigma_f(T)+2^{-f(v_n)}< 1/2+2^{-\left\lceil \log n\right\rceil}+2^{-1-\left\lceil \log n\right\rceil}<1/2+2^{1-\left\lceil \log n\right\rceil}= 1/2+2^{-\left\lceil \log ((n+1)/2)\right\rceil}$.

Now suppose that $T\neq V(C_n)$ or $n$ is even, and replace the assumption that $f(v_n)\geq f(v_i)$ for $1\leq i\leq n$ with the assumption that $v_n\notin T$ if $n$ is odd.  Set $R=B\cap T$ if $\sigma_f(B\cap T)\geq\sigma_f(C\cap T)$ and $R=C\cap T$ otherwise, so $R$ is independent and $\sigma_f(R)\geq\sigma_f(T-R)$.  The game reduces to $R^{\pm}(C_{n-|R|},g)$, where $V(C_{n-|R|})=V(C_n)-R$ and $g(v)=f(v)-|T\cap\{v\}|$.  By our inductive hypothesis Ranker wins, as $\sigma_g(V(C_n)-R)=\sigma_f(V(C_n))+\sigma_f(T-R)-\sigma_f(R)\leq \sigma_f(V(C_n))<1/2+2^{-\left\lceil \log n\right\rceil}<1/2+2^{-\left\lceil \log (n-|R|)\right\rceil}$.
\end{proof}
Once again we want to explore the boundary case.
\begin{prop}
For $n\geq 3$, there is a function $f$ such that $\tau_f(V(C_n))=1$ and $\sigma_f(V(C_n))=1/2+2^{-\left\lceil \log n\right\rceil}$ but $C_n$ is not $f$-list rankable.
\end{prop}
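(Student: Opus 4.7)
The plan is to take $K=\lceil \log n\rceil$ and $a=2^K+2-n$, noting $a\geq 2$ since $n\leq 2^K$. Label the vertices of $C_n$ as $v_1,\ldots,v_n$ in cyclic order, and set $f(v_i)=K$ for $1\leq i\leq a$ and $f(v_i)=K+1$ for $a+1\leq i\leq n$. A direct computation gives $\sigma_f(V(C_n))=a\cdot 2^{-K}+(n-a)\cdot 2^{-K-1}=(a+n)/2^{K+1}=(2^K+2)/2^{K+1}=1/2+2^{-K}$, and since $f$ attains its minimum value $K$ at least twice, the index $k$ in the definition of $\tau_f$ equals $1$, so $X'=\emptyset$ and $\tau_f(V(C_n))=2\sigma_f(X'')=(a+n-2)/2^K=1$.

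For non-list-rankability, assign $L(v_i)=\{2,\ldots,K+1\}$ to each $K$-vertex and $L(v_i)=\{1,\ldots,K+1\}$ to each $(K+1)$-vertex, and suppose for contradiction that $\alpha$ is an $L$-ranking. All labels lie in $\{1,\ldots,K+1\}$, so $\alpha$ is a $(K+1)$-ranking of $C_n$, and since $\rho(C_n)=K+1$, the label $K+1$ appears exactly once, at some vertex $v_p$. Every path contained in the arc $v_1,\ldots,v_a$ is a path of $C_n$, so the restriction of $\alpha$ to the arc is itself a ranking of the sub-path $P_a$ using labels only from $\{2,\ldots,K+1\}$. When $v_p$ lies outside the arc, the arc uses only labels from $\{2,\ldots,K\}$, giving at most $K-1$ distinct values and forcing $\rho(P_a)\leq K-1$; this fails whenever $a\geq 2^{K-1}$, equivalently $n\leq 2^{K-1}+2$.

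The main obstacle is completing the case analysis in the remaining situations, namely $v_p$ inside the arc and all cases when $n>2^{K-1}+2$. The extremal sub-case $n=2^{K-1}+1$ is immediate, since then the arc spans all of $C_n$ and only $K$ distinct labels are available while $\rho(C_n)=K+1$. The clean sub-case $n=2^K$ (so $a=2$) is handled by noting that $P_{n-1}=P_{2^K-1}$ has an essentially unique $K$-ranking, in which every odd-indexed position carries label $1$: regardless of which $v_p$ is removed, at least one of the arc vertices $v_1,v_2$ occupies an odd position in $P_{n-1}$ and is thus forced to receive label $1$, contradicting its list. For intermediate $n$, where the $K$-ranking of $P_{n-1}$ is no longer unique, one enumerates the possible positions of the maximum label of $P_{n-1}$ and, in each case, uses the forced values on the arc in $\{2,\ldots,K\}$ together with the ranking condition on a same-label pair in $C_n$ whose two separating paths both lack a sufficiently large interior vertex to derive a contradiction.
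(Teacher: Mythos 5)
Your construction ($f=K$ on an arc of $a=2^{K}+2-n\geq 2$ consecutive vertices and $f=K+1$ elsewhere, with lists $\{2,\ldots,K+1\}$ and $\{1,\ldots,K+1\}$ respectively) is plausible, and the computations of $\sigma_f$ and $\tau_f$ are correct; I also checked that the non-rankability claim holds for small instances such as $n=4$ and $n=6$. But the proof of non-list-rankability is genuinely incomplete, and you say so yourself: the argument is only carried out when $v_p$ lies outside the arc and $n\leq 2^{K-1}+2$, when $n=2^{K-1}+1$, and when $n=2^{K}$ (where you invoke the uniqueness of the $K$-ranking of $P_{2^{K}-1}$). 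For all intermediate $n$, and for $v_p$ inside the arc, you offer only the plan ``one enumerates the possible positions of the maximum label.'' This is the heart of the proposition, not a routine verification: once $n<2^{K}$ the $K$-ranking of $P_{n-1}$ is far from unique, a simple count of available labels on the arc gives no contradiction (the arc can always be ranked from $\{2,\ldots,K+1\}$ in isolation since $a\leq 2^{K}-1$), and the obstruction must come from an interaction between the arc and the rest of the cycle that you have not identified. As written, the statement is not proved.

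For comparison, the paper avoids all case analysis by choosing a different $f$. It starts from $C_{2^{k}+1}$ with the constant function $f=k+1$, which is not even $f$-rankable because $\rho(C_{2^{k}+1})=k+2$; it then repeatedly splits a vertex $w$ maximizing $f$ into two adjacent vertices each with value $f(w)+1$, and the contrapositive of Corollary \ref{EdgeContraction} (an application of Proposition \ref{LMinor}) shows each splitting preserves non-list-rankability, while $\sigma_f$ and $\tau_f$ are unchanged. The resulting $f$ takes a whole range of values rather than just two, but the non-rankability comes for free from the minor machinery. If you want to salvage your two-valued construction, you would either need to complete the enumeration honestly or find a structural invariant (for instance, a counting argument on where label $1$ can appear) that rules out all placements of $v_p$ at once.
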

\begin{proof}
Note that if $f(v)=k+1$ for each $v\in V(C_{2^k+1})$, then $\tau_f(V(C_{2^k+1}))=1$ and $\sigma_f(V(C_{2^k+1}))=1/2+2^{-\left\lceil \log (2^k+1)\right\rceil}$, but $C_{2^k+1}$ is not even $f$-rankable.  By Corollary \ref{EdgeContraction}, if $C_n$ is not $f$-list rankable and $C_{n+1}$ has the same vertices as $C_n$ except for replacing some vertex $w$ maximizing $f$ on $V(C_n)$ with adjacent vertices $u$ and $v$ in $C_{n+1}$, then setting $f(u)=f(v)=f(w)+1$ precludes $C_{n+1}$ from being $f$-list rankable.  By this construction $\tau_f(V(C_n))=\tau_f(V(C_{n+1}))$ and $\sigma_f(V(C_n))=\sigma_f(V(C_{n+1}))$, so the proposition follows.
\end{proof}
\begin{prop}
For $n\geq 6$, there is a function $f$ such that $\tau_f(V(C_n))=1$ and $C_n$ is $f$-list rankable.
\end{prop}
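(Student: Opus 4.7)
The plan is to mirror Corollary \ref{Sigma} for cycles: exhibit an explicit $f$ on $V(C_n)$ with $\tau_f(V(C_n))=1$, then deduce $f$-list rankability of $C_n$ from the path-case result of Proposition \ref{PathCounterex}. With $v_1,\ldots,v_n$ labeled in cyclic order, I take precisely the function from Proposition \ref{PathCounterex}: $f(v_1)=2$, $f(v_2)=3$, $f(v_3)=1$, $f(v_i)=i$ for $4\leq i\leq n-1$, and $f(v_n)=n-1$. Sorting the $f$-values yields $1,2,3,4,\ldots,n-1,n-1$ with the first tie at position $n-1$, so $\sigma_f(X')=1-2^{-(n-2)}$ and $\sigma_f(X'')=2^{-(n-1)}$, giving $\tau_f(V(C_n))=(1-2^{-(n-2)})+2\cdot 2^{-(n-1)}=1$.

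To show $C_n$ is $f$-list rankable, fix any $f$-list assignment $L$. By Proposition \ref{PathCounterex}, the path $P_n=v_1v_2\cdots v_n$ (that is, $C_n$ minus the edge $v_1v_n$) is $f$-list rankable. An $L$-ranking of $P_n$ is automatically an $L$-ranking of $C_n$ provided $\alpha(v_1)\neq\alpha(v_n)$, since the cycle edge $v_1v_n$ has no interior vertex to carry a larger label. The task therefore reduces to producing an $L$-ranking of $P_n$ with $\alpha(v_1)\neq\alpha(v_n)$.

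I would produce such a ranking by first committing $\alpha(v_n)=c$ for a carefully chosen $c\in L(v_n)$ and then ranking $P_{n-1}=v_1v_2\cdots v_{n-1}$ from the restricted lists $L'(v_i)=L(v_i)\cap\{1,\ldots,c-1\}$, so that $c$ becomes the largest label used on $C_n$. Since $|L(v_n)|-|L(v_1)|=n-3\geq 3$ and $|L(v_3)|=1$, there are at least $n-4\geq 2$ candidates for $c$ in $L(v_n)\setminus(L(v_1)\cup L(v_3))$; any such $c$ automatically satisfies $c\neq\alpha(v_1)$ and leaves $L'(v_3)=L(v_3)$ nonempty.

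The main obstacle is to guarantee that $P_{n-1}$ is $L'$-rankable. The naive bound $f'(v_i)\geq f(v_i)-1$ pushes $\sigma_{f'}(V(P_{n-1}))$ potentially above the Theorem \ref{Path} threshold of $1$, so $L'$-rankability is not automatic. My plan is either to refine the choice of $c$ (using that $L(v_n)$'s $n-1$ elements cannot all simultaneously be large and shared with every other list, so some $c$ keeps $\sigma_{f'}<1$) or to apply a shifted version of Proposition \ref{PathCounterex} to $P_{n-1}$, since the restriction of $f$ to $V(P_{n-1})$ still has the special $(2,3,1,4,5,\ldots,n-1)$ shape that Proposition \ref{PathCounterex} controls. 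This delicate bookkeeping is the step I expect to demand the most care.
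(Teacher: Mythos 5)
Your computation of $\tau_f(V(C_n))=1$ is correct, but the proposal fails at a more basic level than the ``delicate bookkeeping'' you flag: the function you chose does not work. Transplanting the path function of Proposition \ref{PathCounterex} onto the cycle puts the size-$2$ list on a vertex adjacent to the size-$(n-1)$ list and two steps from the size-$1$ list, and this cyclic arrangement admits a bad list assignment. For $n=6$, take $L(v_1)=\{4,5\}$, $L(v_2)=\{3,4,5\}$, $L(v_3)=\{3\}$, $L(v_4)=\{2,3,4,5\}$, $L(v_5)=L(v_6)=\{1,2,3,4,5\}$. Then $v_3$ must get $3$, so $v_2$ and $v_1$ must take $4$ and $5$ in some order; since $5$ is the global maximum it can appear only once, and then $4$ can appear only once as well (two $4$'s would need a $5$ on both connecting paths). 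Hence $v_4,v_5,v_6$ must avoid $4$ and $5$, which forces $\alpha(v_4)=2$ and $\alpha(v_5)=1$, after which no label in $\{1,2,3,4,5\}$ works for $v_6$ (each choice creates a monotone conflict along $v_6v_5v_4v_3v_2$ or an adjacency conflict with $v_1$). The same pattern of nested intervals generalizes to all $n\geq 6$, so $C_n$ is genuinely not $f$-list rankable for your $f$. Note that this $L$ is consistent with Proposition \ref{PathCounterex}: the path $P_6$ can be ranked $4,5,3,2,1,4$, but closing the cycle destroys that ranking. This also exposes the false step in your reduction: an $L$-ranking of $C_n-v_1v_n$ with $\alpha(v_1)\neq\alpha(v_n)$ is not automatically a ranking of $C_n$, because the new edge creates new paths such as $v_2v_1v_nv_{n-1}$ that must also carry a strictly larger interior label.

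The paper instead uses the rearranged function $f=(1,3,4,2,5,6,\ldots,n-1,n-1)$, which separates the size-$1$ and size-$2$ vertices, and verifies list rankability by a six-case analysis on the structure of $L$ (where the extreme labels live, whether $L(v_1)$ is a singleton of the second-smallest label, etc.), with an induction on $n$ from the base case $n=6$. There is no quick reduction to the path theorem of the kind you propose; indeed your own final paragraph concedes that the key step (rankability of $P_{n-1}$ from the truncated lists) is unproved, and your criterion for choosing $c$ outside $L(v_1)\cup L(v_3)$ can force $c$ to be small, emptying the truncated lists entirely. So even setting aside the counterexample, the argument as written does not close.
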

\begin{proof}
Let $V(C_n)=\{v_1,\ldots ,v_n\}$ and $E(C_n)=\{v_iv_{i+1}:1\leq i\leq n\}$, with $f(v_1)=1$, $f(v_2)=3$, $f(v_3)=4$, $f(v_4)=2$, $f(v_i)=i$ for $5\leq i<n$, and $f(v_n)=n-1$.  Let $L$ be an $f$-list assignment, and over $\bigcup_{i=1}^n L(v_i)$ let $m$ be the largest value, $m'$ be the second largest value, $b$ be the smallest value, and $b'$ be the second smallest value found.  We wish to find a ranking of $C_n$ such that each $v_i$ is labeled with $a_i\in L(v_i)$.
\begin{case}\label{one}
$L(v_1)\cap\{m,m'\}\neq\emptyset$.
\end{case}
We can create an $L$-ranking of $C_n$ by letting $a_1\in\{m,m'\}$ and then ranking the path $P$ induced by $\{v_2,\ldots,v_n\}$ so that $a_i\in L(v_i)-\{a_1\}$ for $2\leq i\leq n$.  Indeed, it was shown in  Proposition \ref{PathCounterex} that $P$ is $(f-1)$-list rankable, and our ranking of $P$ labels no vertex with $a_1$ and at most one vertex with a label greater than $a_1$.
\begin{case}\label{two}
$L(v_n)\neq\bigcup ^{n-1}_{i=1} L(v_i)$.
\end{case}
Let $j$ be the least index such that $L(v_j)-L(v_n)\neq\emptyset$.  We create an $L$-ranking by choosing labels $a_i\in L(v_i)$ in the order $i=1,4,2,3,5,\ldots ,n$ such that each $a_i$ is distinct from its predecessors and $a_j\notin L(V_n)$.  We can do this for $1\leq i<n$ since by the time $a_i$ is to be chosen only $f(v_i)-1$ previous labels will have been used, and we can choose $a_n\in L(v_n)-\{a_1,\ldots,a_{n-1}\}$ since $|L(v_n)|=n-1$ and $a_j\notin L(v_n)$. 
\begin{case}\label{three}
$b\in \bigcup ^{n-2}_{i=1} L(v_i)$.
\end{case}
We can choose $a_1,\ldots ,a_{n-2}$ from $L(v_1),\ldots ,L(v_{n-2})$ to be distinct and contain $b$, leaving some $a\in L(v_{n-1})-\left\{ a_1,\ldots ,a_{n-2}\right\}$ with $a>b$.  We may assume $a\in L(v_n)$ (otherwise Case \ref{two} applies), so we can complete an $L$-ranking by either setting $a_{n-1}=a$ and $a_n=b$ if $a_1\neq b$ or setting $a_{n-1}=b$ and $a_n=a$ if $a_{n-2}\neq b$. 
\begin{case}\label{four}
$L(v_1)=\left\{ b'\right\}$.
\end{case}
We may assume $b\notin L(v_2)$ (otherwise Case 3 applies), so finding an $L$-ranking of $C_n$ reduces to finding an $L'$-ranking of the cycle $C_{n-1}$ created by deleting $v_1$ and adding the edge $v_n v_2$, where $L'(v_i)=L(v_i)-\{b'\}$ for $i\in\{2,n-1,n\}$ and $L'(v_i)=L(v_i)$ for $3\leq i\leq n-2$.  Indeed, we would have $a_2>b'$ (since $L'(v_2)\cap\{b,b'\}=\emptyset$) and $\max\{a_{n-1},a_n\}>b'$ (since $b'\notin L'(v_{n-1})\cup L'(v_n)$ and $a_{n-1}\neq a_n$), so the $L'$-ranking of $C_{n-1}$ could be turned into an $L$-ranking of $C_n$ by setting $a_1=b'$.  If $f'(v_i)=f(v_i)-1$ for $i\in\{2,n-1,n\}$ and $f'(v_i)=f(v_i)$ for $3\leq i\leq n-2$, then $|L'(v_i)|\geq f'(v_i)$ for each $i$, and we have $\tau_{f'}(V(C_{n-1}))=2(1/4+(\sum ^{n-2}_{i=4}2^{-i})+2^{2-n}+2^{2-n}) = 2(3/8+2^{2-n})\leq 2(3/8+1/16)=7/8<1$.  By Lemma \ref{CycleDown}, $C_{n-1}$ is $f'$-list rankable, giving us an $L'$-ranking of $C_{n-1}$ and thus an $L$-ranking of $C_n$.  
\begin{case}\label{five}
$n=6$.
\end{case}
Without loss of generality assume $L(v_5)=L(v_6)=\{1,2,3,4,5\}$ (since Case \ref{two} applies if $L(v_6)\neq\bigcup ^{5}_{i=1} L(v_i)$), so $b=1$, $b'=2$, $m'=4$, and $m=5$.  Thus we may assume, lest Case \ref{three} apply, that $L(v_1)=\{ 3\}$ (otherwise Case \ref{one} or \ref{four} applies), $L(v_2)$ contains $2$ or $3$ as well as $4$ or $5$, $L(v_3)=\{ 2,3,4,5\}$, and either $L(v_4)=\{ 4,5\}$ or $L(v_4)$ contains $2$ or $3$.  If $2\in L(v_4)$, let $a_1=3$, $a_2=4$ (or $a_2=5$, if $4\notin L(v_2)$), $a_3=3$, $a_4=2$, $a_5=5$ (or $a_5=4$, if $4\notin L(v_2)$), and $a_6=1$.  If $3\in L(v_4)$, let $a_1=3$, $a_2=4$ (or $a_2=5$ if $4\notin L(v_2)$), $a_3=2$, $a_4=3$, $a_5=5$ (or $a_5=4$, if $4\notin L(v_2)$), and $a_6=1$.  If $L(v_4)=\{ 4,5\}$, let $a_1=3$, $a_2=4$ (or $a_2=5$, if $4\notin L(v_2)$), $a_3=2$, $a_4=5$ (or $a_4=4$, if $4\notin L(v_2)$), $a_5=2$, and $a_6=1$.
\begin{case}\label{six}
$n\geq 7$.
\end{case}
We perform induction on $n$, using Case \ref{six} as the base case.  Thus we assume the cycle $C_{n-1}$ created by deleting $v_{n-2}$ and adding the edge $v_{n-3}v_{n-1}$ is $f'$-list rankable, where $f'(v_i)=f(v_i)-1$ for $i\in\{n-1,n\}$ and $f'=f$ elsewhere.  We may also assume $b\in (L(v_{n-1})\cap L(v_n))-(L(v_1)\cup L(v_{n-2}))$ and $b'\in (L(v_{n-1})\cap L(v_n))-L(v_1)$ (otherwise Case \ref{three} or \ref{four} applies).  If $a_1,\ldots ,a_{n-2}$ can be chosen from $L(v_1),\ldots ,L(v_{n-2})$ to be distinct such that $a_{n-2}\neq b'$, then we can complete an $L$-ranking of $C_n$ by setting $a_{n-1}=b'$ and $a_n=b$.  If $a_1,\ldots ,a_{n-2}$ cannot be chosen from $L(v_1),\ldots ,L(v_{n-2})$ to be distinct such that $a_{n-2}\neq b'$, then the only lists containing $b'$ are $L(v_{n-2})$, $L(v_{n-1})$, and $L(v_n)$.  By our inductive hypothesis, if $L'(v_i)=L(v_i)$ for $1\leq i\leq n-3$ and $L'(v_i)=L(v_i)-\{b'\}$ for $i\in\{n-1,n\}$, then the cycle $C_{n-1}$ created by deleting $v_{n-2}$ and adding the edge $v_{n-3}v_{n-1}$ has an $L'$-ranking, which we can extend to an $L$-ranking of $C_n$ by setting $a_{n-2}=b'$. 
\end{proof}
\setcounter{case}{0}
\begin{cor}
For $n\geq 7$, there is a function $f$ such that $\tau_f(V(C_n))>1$ and $C_n$ is $f$-list rankable.
\end{cor}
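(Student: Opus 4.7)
The plan is to modify the function from the preceding proposition in the smallest possible way to push $\tau_f$ just past $1$ while preserving list-rankability. Specifically, I would take $f(v_1)=1$, $f(v_2)=3$, $f(v_3)=4$, $f(v_4)=2$, $f(v_i)=i$ for $5\le i\le n-1$, and $f(v_n)=n-2$; this is the preceding proposition's function with $f(v_n)$ decreased by one.

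The first step is the computation of $\tau_f$. Sorting the $f$-values gives $1,2,3,4,5,\ldots,n-3,n-2,n-2,n-1$, so the least index with an equality is $k=n-2$; hence $X'$ has $f$-values $1,\ldots,n-3$ and $X''$ has $f$-values $n-2,n-1$. A direct calculation gives
\[\tau_f(V(C_n))=\bigl(1-2^{-(n-3)}\bigr)+2\bigl(2^{-(n-2)}+2^{-(n-1)}\bigr)=1+2^{-(n-2)}>1.\]

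The second step is to show $C_n$ is $f$-list rankable, which I would do by mimicking the preceding proposition's proof. Its Case~$6$ reduces $C_n$ to the cycle $C_{n-1}$ obtained by deleting $v_{n-2}$ and adding the edge $v_{n-3}v_{n-1}$, with $f(v_{n-1})$ and $f(v_n)$ each decremented by one. Under my modification, the resulting function on $C_{n-1}$ is again of the same shape $(1,3,4,2,5,\ldots,m-1,m-2)$ with $m=n-1$, so the inductive reduction lands inside the same family and propagates down verbatim.

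The main obstacle will be the base case. The preceding proposition's Case~$5$ assumed $f(v_5)=f(v_6)=5$, which no longer holds under my modification. I would halt the induction at $n=7$ and verify $C_7$ with $f=(1,3,4,2,5,6,5)$ directly: the top-and-bottom dichotomies of Cases $1$--$4$ carry over verbatim, since they only reference the positions of the extremal labels $m,m',b,b'$ and not the specific sizes of the last few lists, and the replacement for Case~$5$ becomes a finite case check that differs from the original only in having $|L(v_7)|=5$ in place of $6$. This verification is tight but tractable, because the preceding proposition's endgame always assigns the smallest available label to $v_n$, and the extra slack in $|L(v_5)|$ and $|L(v_6)|$ should absorb the token lost at $v_7$.
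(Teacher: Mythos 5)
Your choice of $f$ and the computation $\tau_f(V(C_n))=1+2^{-(n-2)}>1$ are correct, but the list-rankability half has a real gap: the claim that Cases 1--4 of the preceding proposition ``carry over verbatim'' because they ``only reference the positions of the extremal labels'' is false, and the problem is not confined to the base case. Case 2 of that proof ends by choosing $a_n\in L(v_n)-\{a_1,\ldots,a_{n-1}\}$, which works precisely because $|L(v_n)|=n-1$ and one of the $n-1$ already-used labels is guaranteed to lie outside $L(v_n)$; with your $|L(v_n)|=n-2$ that count comes out to zero, so the single token you removed is exactly the token that argument spends. Case 1 is also not label-position-only: it ranks the path on $v_2,\ldots,v_n$ from lists of sizes $f-1$ by invoking Proposition \ref{PathCounterex}, but your modified $f-1$ on that path has $\sigma=1+2^{-(n-2)}>1$, so neither Theorem \ref{Path} nor Proposition \ref{PathCounterex} applies and the path's list-rankability is unestablished. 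Since Case 3 falls back on Case 2 and Case 6 falls back on Cases 3 and 4, these failures propagate through every level of the induction, not just $n=7$; you would essentially have to re-prove the whole proposition for your function, and it is not even clear a priori that your $f$ is list rankable.

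The paper sidesteps all of this: it fixes the $n=6$ instance $f=(1,3,4,2,5,5)$ already certified by the proposition (where $\tau_f=1$) and extends to $C_n$ by assigning $f(v_i)=i$ for $7\leq i\leq n$ and applying Lemma \ref{Distinct}. The appended vertices, having large strictly increasing list sizes, require no new case analysis, yet they contribute to $\tau_f$ with the factor of $2$ from $X''$, giving $\tau_f(V(C_n))=33/32-2^{1-n}>1$ for $n\geq 7$. If you want to salvage your approach, the cleanest fix is to abandon the modified induction and instead graft extra high-valued vertices onto an already-proven boundary example in this same way.
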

\begin{proof}
By the above proposition, if $f=(1,3,4,2,5,5)$ then $C_6$ is $f$-list rankable.  Let $f(v_i)=i$ for $7\leq i\leq n$; then $\tau_f(V(C_n))=33/32-2^{1-n}$ and $C_n$ is $f$-list rankable by Lemma \ref{Distinct}.
\end{proof}
Conjecture \ref{Sigma} says that if $\sigma_f(V(P_n))$ is large enough, then $P_n$ is not $f$-list rankable.  Since deleting an edge of $C_n$ leaves a copy of $P_n$ and $\tau_f(V(C_n))<2\sigma_f(V(C_n))$, Conjecture \ref{Sigma} would also imply that $C_n$ is not $f$-list rankable for large enough $\tau_f(V(C_n))$ or $\sigma_f(V(C_n))$.
\section{Trees With Many Leaves} 
\label{sec:trees}
We now prove our third main result, that $\rho _{\ell}(T)=q$ if $T$ is a tree having $p$ internal vertices and $q$ leaves, where $q\geq 2^{p+2}-2p-4$.  Since Proposition \ref{TreeBound} implies that $\rho _{\ell}(T)\geq q$ for any tree $T$ with $q$ leaves, we need only prove the upper bound.  We consider separately trees with two or fewer internal vertices.
\begin{prop}\label{Star}
If $S$ is a star with $q$ leaves, then $\rho ^+_{\ell}(S)=q$.
\end{prop}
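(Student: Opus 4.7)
The lower bound $\rho^+_\ell(S) \geq q$ is immediate by applying Proposition \ref{TreeBound} to $S$ as a subtree of itself, combined with Proposition \ref{Comparison}. For the upper bound, I will exhibit a winning strategy for Ranker in $R^+(S,q)$ (implicitly assuming $q \geq 2$, as $q = 1$ would make $S = K_2$, whose on-line list high-ranking number is $2$).

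Let $c$ denote the center of $S$. In each round $i$, Ranker's response to Taxer's chosen set $T_i$ will be: if $c$ is no longer present in $G_i$, remove every vertex of $T_i$; otherwise, if some vertex of $T_i$ has $0$ tokens after Taxer's pick, remove such a vertex; else if $c \in T_i$, remove $c$; else remove any leaf in $T_i$ with the fewest remaining tokens. The plan is to maintain two invariants at the start of every round in which $c$ is still present: (A) if the present leaves carry tokens $\tau_1 \leq \cdots \leq \tau_n$ in non-decreasing order, then $\tau_k \geq k$ for every $k$; and (B) it is never the case that both $c$ and some leaf hold exactly one token. Together (A) and (B) force at most one present vertex to hold a single token, so at most one vertex of $T_i$ becomes bankrupt after Taxer's pick, and Ranker's strategy legally removes it, preserving the property that no present vertex ends the round with zero tokens.

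To prove (A), I consider the rounds in which Ranker removes the minimum-token leaf $\ell \in T_i$. If the $k$-th smallest new token value were below $k$, then $k$ surviving leaves would each satisfy $\tau_i - [v_i \in T_i] \leq k-1$, so by the hypothesis $\tau_i \geq i$ each such leaf has old sorted index at most $k$; but those leaves are drawn from $\{v_1, \ldots, v_k\} \setminus \{\ell\}$, so if $\ell$'s old sorted index $j$ satisfies $j \leq k$ this set has size strictly less than $k$ (contradiction), while if $j > k$ then $v_k$ must be among them, and checking $v_k \in T_i$ forces $\tau_k \geq \tau_j \geq k+1$ (contradicting $\tau_k \leq k$), whereas $v_k \notin T_i$ forces $\tau_k \leq k-1$ (contradicting $\tau_k \geq k$). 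For (B), if $c$ had one token at the start of round $i$ then $c$ has been picked $q-1$ times without being removed, which under the strategy forces a zero-token leaf (and hence a leaf removal rather than a $c$-removal) in each of those rounds; any round with $c \notin T$ also removes a leaf; and since at round $1$ all vertices carry $q \geq 2$ tokens, the strategy would immediately remove $c$ if $c \in T_1$, so at least one prior round has $c \notin T$, making the total number of leaf removals strictly exceed $q-1$ and leaving no leaf present to carry a single token alongside $c$.

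The main obstacle is the sorted-order argument for Invariant (A), since the index case analysis and the exploitation of $\ell$'s being minimum-token in $T_i$ must be executed carefully; once both invariants are in hand, the post-$c$-removal phase is trivial, because the surviving leaves are in separate components and each carries at least one token, so Ranker safely removes all tokenized vertices each subsequent round until $G_{i+1} = \emptyset$.
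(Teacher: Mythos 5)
Your proof is correct, but it takes a genuinely different route from the paper. The paper's argument is a short induction on $q$: if Taxer touches the center in the first round, Ranker deletes it and is left with isolated vertices each holding at least $q-1$ tokens; otherwise Ranker deletes any touched leaf, leaving a star with $q-1$ leaves and at least $q-1$ tokens on every vertex, and the inductive hypothesis (together with the monotonicity of Corollary \ref{Monotone}) finishes, with the base case $P_3$ delegated to Theorem \ref{Path}. You instead exhibit a single explicit strategy and verify it directly via two invariants: the sorted leaf token counts dominate $1,2,\ldots$, and the center and a leaf never simultaneously hold one token. Your sorted-index verification of (A) is sound (the three subcases $j\leq k$, $j>k$ with $v_k\in T_i$, and $j>k$ with $v_k\notin T_i$ each yield the needed contradiction), and your counting argument for (B) correctly shows that the center can reach one token only after all $q$ leaves have been removed, since every round in which the center survives while being picked, as well as round $1$, deletes a leaf. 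What the paper's approach buys is brevity and modularity; what yours buys is self-containment (no appeal to Theorem \ref{Path} or to the minor machinery) and a fully explicit winning strategy, at the cost of a more delicate bookkeeping argument in the spirit of the $\sigma_f$ analysis of Section \ref{sec:paths}. One small point in your favor: you explicitly restrict to $q\geq 2$ and observe that the statement fails literally for $q=1$ (since $\rho^+_{\ell}(K_2)=2$), whereas the paper's claim that the statement is ``obvious'' for $K_1$ and $K_2$ is, as written, not quite right.
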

\begin{proof}
We use induction on $q$ to show Ranker has a winning strategy for the game $R^+(S,f)$, where $f=q$ everywhere.  The statement is obvious if $S\in\{K_1,K_2\}$, and it follows from Theorem \ref{Path} if $S=P_3$.  Thus we may assume $q\geq 3$ and Ranker wins for stars having fewer than $q$ leaves. 

If Taxer takes a token from the internal vertex in the first round, let Ranker respond by removing it; then $q$ isolated vertices remain, each with at least $q-1$ tokens, so Ranker can win the game.  If Taxer takes tokens from only leaves in the first round, then let Ranker respond by removing a leaf, leaving a star with $q-1$ leaves and at least $q-1$ tokens on each vertex.  By the inductive hypothesis Ranker can win the game.
\end{proof}
\begin{prop}\label{DoubleStar}
If $T$ is a double star with $q$ leaves ($q\geq 3$), then $\rho ^+_{\ell}(T)=q$.
\end{prop}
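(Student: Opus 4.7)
I plan to prove Proposition \ref{DoubleStar} by induction on $q$, modeled closely on the proof of Proposition \ref{Star}. The lower bound $\rho^+_{\ell}(T) \geq q$ is immediate from Proposition \ref{TreeBound} combined with Proposition \ref{Comparison}, so only the upper bound requires work. Write the two adjacent internal vertices of $T$ as $u$ and $v$ with $a$ and $b$ leaves respectively, so $a + b = q$, and assume by symmetry that $1 \leq a \leq b$ (hence $b \geq 2$, since $q \geq 3$). The plan is to exhibit a winning strategy for Ranker in $R^+(T, f)$ with $f \equiv q$, branching on the set $T_1 \subseteq V(T)$ that Taxer chooses in round $1$.

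If $T_1 \cap \{u, v\} \neq \emptyset$, Ranker removes an internal vertex lying in $T_1$. The residual graph splits into a star $K_{1, b'}$ (with $b' \in \{a, b\}$) together with some isolated former leaves of the deleted vertex. Each surviving vertex still holds at least $q - 1$ tokens and $b' \leq q - 1$ since $a, b \geq 1$, so Proposition \ref{Star}, Corollary \ref{Monotone}, and Corollary \ref{Component} together give Ranker the win. If instead $T_1 \cap \{u, v\} = \emptyset$, then $T_1$ consists only of leaves, and Ranker removes one, choosing it so as to keep the residual graph a double star with $q - 1$ leaves whenever possible. Such a choice is available unless $a = 1$ and $T_1 = \{\ell\}$ with $\ell$ the unique leaf of $u$; in that cornered subcase Ranker removes $\ell$, yielding $K_{1, q}$ in which every surviving vertex still holds $q$ tokens (since $\ell$ was the only vertex Taxer touched), so Proposition \ref{Star} gives the win directly. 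In every other subcase Ranker's deletion produces a double star with $q - 1$ leaves in which both internal vertices retain $q$ tokens and each remaining leaf retains at least $q - 1$ tokens; for $q \geq 4$ the inductive hypothesis and Corollary \ref{Monotone} finish the case, while for $q = 3$ the residual tree is $P_4$ with token function $f'$ satisfying $\sigma_{f'}(V(P_4)) \leq 2 \cdot 2^{-2} + 2 \cdot 2^{-3} = 3/4 < 1$, so Ranker wins by Theorem \ref{Path} (which yields a winning strategy even in the $R^{\pm}$ game, hence a fortiori in $R^+$).

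The main obstacle is threading the leaves-only case so that Ranker's reduction always lands in a configuration to which the available results apply. The delicate point is that the reduction to $K_{1, q}$ arises only from a singleton Taxer move, guaranteeing Proposition \ref{Star} applies in its full $f \equiv q$ form; if Ranker could be coerced into creating a $K_{1, q}$ with depleted leaves, the star proposition would not apply directly. The base case $q = 3$ slightly interrupts the pure induction, but the $P_4$ that appears falls squarely within the reach of Theorem \ref{Path}, and beyond this the argument is routine bookkeeping on token counts before each invocation of Corollary \ref{Monotone}.
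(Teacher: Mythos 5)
Your proof is correct and takes essentially the same approach as the paper's: branch on whether Taxer's first move touches an internal vertex (reducing to a star via Proposition \ref{Star}) or only leaves (removing a leaf so as to keep a double star and inducting, with the degenerate single-leaf subcase collapsing to a full-token star and the $q=3$ base case handled as a $P_4$ via Theorem \ref{Path}). The only difference is that you make the token-count bookkeeping and the appeals to Corollaries \ref{Monotone} and \ref{Component} explicit where the paper leaves them implicit.
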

\begin{proof}
We show Ranker has a winning strategy for the game $R^+(T,f)$, where $f=q$ everywhere.  Let the (adjacent) interior vertices of $T$ be $x$ and $y$, with $x$ adjacent to leaves $x_1,\ldots ,x_m$ and $y$ adjacent to leaves $y_1,\ldots ,y_n$ (so $q=m+n\geq 3$).  We may assume $m\leq n$, so by hypothesis $n\geq 2$.  If Taxer selects an internal vertex in the first round, let Ranker respond by removing an internal vertex, leaving behind isolated vertices and a star with at most $q-1$ leaves.  Each remaining vertex still has at least $q-1$ tokens, so by Proposition \ref{Star} Ranker can win the game.  

Thus we may assume that Taxer selects only leaves in the first round, with Ranker responding by removing a selected leaf.  We use induction on $q$ to show Ranker has a winning strategy.  If $q=3$, then $m=1$ and $n=2$.  If Ranker removes $y_1$ or $y_2$, then a path on four vertices remains, with the internal vertices having three tokens each and the leaves having at least two tokens each; by Theorem \ref{Path} Ranker can win the game.  If Taxer only selects $x_1$ and Ranker removes $x_1$, then a star with three leaves remains, with each vertex having three tokens; by Proposition \ref{Star} Ranker can win this game.  

Now assume $q\geq 4$ and Ranker wins for trees having two internal vertices and between three and $q-1$ leaves.  If Ranker removes some $y_i$, then a tree with $q-1$ leaves and two internal vertices remains, with each vertex having at least $q-1$ tokens; by the inductive hypothesis Ranker can win this game.  If Taxer only selects leaves adjacent to $x$, then Ranker will remove some $x_i$, leaving behind either a tree with two internal vertices and $q-1$ leaves, with each vertex having at least $q-1$ tokens, or a star with $q$ leaves, with each vertex having $q$ tokens.  Either way Ranker can win this game.
\end{proof}
\begin{thm}\label{Tree}
For any tree $T$ having $p$ internal vertices and $q$ leaves, if $q\geq 2^{p+2}-2p-4$, then $\rho _{\ell}(T)=q$.
\end{thm}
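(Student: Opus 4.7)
Since Proposition \ref{TreeBound} applied to $T$ itself gives $\rho_\ell(T) \geq q$, my task is to prove the upper bound $\rho_\ell(T) \leq q$, i.e., that any $q$-uniform list assignment $L$ for $T$ admits an $L$-ranking. The plan is to proceed by induction on $p$, with base cases $p \in \{1, 2\}$ handled by Propositions \ref{Star} and \ref{DoubleStar}: these give $\rho^+_\ell(T) = q$ (noting that $q \geq q_2 = 8 \geq 3$ in the double star case), whence Proposition \ref{Comparison} yields $\rho_\ell(T) \leq q$.

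For the inductive step, let $p \geq 3$ and suppose the result holds for trees with fewer than $p$ internal vertices. Since the subtree $T'$ of $T$ induced by its internal vertices is a tree on at least three vertices, it has at least two leaves. I would select a leaf $v$ of $T'$, let $u$ be its unique internal neighbor in $T'$, and write $\ell_1, \ldots, \ell_d$ for the leaves of $T$ adjacent to $v$ (where $d \geq 1$, since $v$ has just one internal neighbor but is itself internal in $T$). Deleting $v$ and its leaf children gives a tree $T^*$; under the generic assumption $\deg_T(u) \geq 3$, this $T^*$ has $p - 1$ internal vertices and $q - d$ leaves.

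The core labeling step is to choose a label $c \in L(v)$ satisfying $c > \min L(\ell_i)$ for every $i$, label $v$ with $c$, pick $a_i \in L(\ell_i) \cap [1, c-1]$ as the label for each $\ell_i$, and then apply the inductive hypothesis to $T^*$ with the reduced lists $L'(w) = L(w) \setminus \{c\}$ (of size at least $q - 1$). The inductive hypothesis demands $q - d \geq q_{p-1} = 2^{p+1} - 2p - 2$, which is guaranteed by $q \geq q_p$ whenever $d \leq 2^{p+1} - 2$; since $d \geq 1$, I may then invoke monotonicity of list rankability (a $k$-list rankable graph remains $k'$-list rankable for $k' \geq k$, by restricting a $k'$-uniform list to a $k$-uniform sublist) to conclude that $T^*$ is $(q-1)$-list rankable and obtain an $L'$-ranking of $T^*$. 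Gluing this $L'$-ranking with the labeling on $\{v, \ell_1, \ldots, \ell_d\}$ produces an $L$-ranking of $T$: any two vertices sharing a label across the two pieces have their connecting path traversing $v$, whose label $c$ exceeds every $a_i$ and is absent from every list in $T^*$.

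The main obstacle is handling the exceptional sub-cases where this direct reduction fails: (a) when no $c \in L(v)$ satisfies $c > \min L(\ell_i)$ for every $i$, meaning some $\ell_i$ has its entire list above $\max L(v)$; (b) when $d > 2^{p+1} - 2$, violating the inductive bound needed for $T^*$; and (c) when $\deg_T(u) = 2$, in which case $u$ becomes a leaf of $T^*$ and the internal count of $T^*$ drops by more than one. Each sub-case requires a bespoke argument, and the exponential form of $q_p$ is precisely calibrated to supply the needed slack --- the doubling between $q_p$ and $q_{p-1}$ permits absorbing up to $2^{p+1} - 2$ leaves per inductive step, leaving room to switch to a different pendant of $T'$ when one is problematic, or to peel leaves in a multi-step procedure before invoking induction. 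Working out the exceptional sub-cases cleanly is where the real difficulty lies, and is presumably the source of the specific (and likely non-optimal) bound $q_p = 2^{p+2} - 2p - 4$ rather than a tighter value.
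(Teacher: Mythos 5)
Your reduction is sound as far as it goes (the gluing step is correct: every path from $T^*$ to a pendant leaf $\ell_i$ passes through $v$, whose label $c$ exceeds every $a_i$ and has been struck from every list in $T^*$), but the proposal is not a proof: it explicitly defers the exceptional sub-cases (a)--(c) to unspecified ``bespoke arguments,'' and sub-case (a) is not a peripheral technicality --- it is the entire difficulty of the theorem. When some pendant leaf $\ell_i$ has $\min L(\ell_i)\geq\max L(v)$ (e.g.\ every leaf of $T$ is listed $\{q,\ldots,2q-1\}$ while every internal vertex is listed $\{1,\ldots,q\}$, essentially the extremal assignment from Proposition \ref{TreeBound}), no separator label on $v$ exists, and no amount of ``switching to a different pendant of $T'$'' helps, since the same obstruction can occur at every internal vertex simultaneously. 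In that regime the only available move is global: give all $q$ leaves of $T$ pairwise distinct labels and then argue that each internal vertex still has enough surviving labels in its list. Your local peel-one-pendant induction has no mechanism for this, and the counting needed to make the global move work (each internal list can meet at most so many of the labels placed on the leaves, else one could have placed a separator after all) is exactly what the bound $q\geq 2^{p+2}-2p-4$ is calibrated against.

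The paper's proof is organized around precisely this dichotomy. It defines, for a given list assignment $L$, the notion of a \emph{special} internal vertex $u$: roughly, one whose subtree $T_u$ contains $p$ leaves $u_1,\ldots,u_p$ of $T$ admitting labels $e_1<\cdots<e_p<m_u=\max L(u)$. If no special vertex exists, it assigns all $q$ leaves distinct labels and shows by counting that each internal list retains more than $p$ unused labels, so the internal vertices can be labeled distinctly. If a special vertex $u$ exists, it labels $u$ with $m_u$ and the $u_i$ with the $e_i$, ranks the rest of $T_u$ by induction on $p$ (this is where the halving $q/2-p\geq 2^{(p-1)+2}-2(p-1)-4$ enters), and finishes the vertices separated from $T_u$ by $u$ with a counting argument. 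Your sub-case (b) is also not dispatched by your sketch: with $T'$ a path, both of its ends can carry more than $2^{p+1}-2$ pendant leaves at once. So the missing content is not routine case-checking but the case split itself and the two counting arguments that make each branch close; without them the argument does not establish the upper bound.
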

\begin{proof}
If $p<\min\{3,q\}$, then $\rho _{\ell}(T)\leq\rho ^+_{\ell}(T)=q$ by Propositions \ref{Star} and \ref{DoubleStar}, so we may assume $p\geq 3$.  If $T$ is a tree with $p$ internal vertices and $q$ leaves, with $q\geq 2^{p+2}-2p-4$, then $T$ has a vertex of degree at least $3$.  For an internal vertex $u$, if $u$ is a vertex of degree at least $3$, or is adjacent to one, or is located on a path whose endpoints each have degree at least $3$ in $T$, let $T_u$ be a component of $T-u$ containing the most leaves of $T$.  If $u$ is any other internal vertex, let $T_u=T_w$, where $w$ is the unique vertex nearest to $u$ that has degree $2$ and is adjacent to a vertex of degree at least $3$.

For each internal vertex $u$, say $T_u$ has $p_u$ internal vertices and $q_u$ leaves, $q'_u$ of which are also leaves of $T$.  Clearly $p_u<p$ and $q_u'\leq q_u<q$.  Let $v$ be an internal vertex such that $q'_v$ is minimum.  For any internal vertex $u$ besides $v$, we have $q'_u\geq q/2$ since either $q'_v\geq q/2$, in which case $q'_u\geq q/2$ by the minimality of $q'_v$, or $q'_v< q/2$, in which case any subtree of $T$ obtained by deleting from $T$ a component of $T-v$ must have more than $q/2$ leaves of $T$.  Thus $v$ has degree at least $3$, and $T_u$ contains the subtree of $T$ obtained by deleting the component of $T-v$ containing $u$, giving $T_u$ more than $q/2$ leaves of $T$.

Let $L$ be a $q$-uniform list assignment to $T$, and for any internal vertex $u$ let $m_u$ denote the largest element of $L(u)$.  Call an internal vertex $u$ \emph{special} if $q'_u\geq q/2$ (that is, if $u\neq v$ or $u=v$ and $q'_v\geq q/2$) and there are vertices $u_1,\ldots ,u_p$ that are leaves of both $T$ and $T_u$ and satisfy the following properties: each internal vertex in $T_u$ has at least two neighbors in $T_u$ that are not one of these leaves, and from each $L(u_i)$ we can select some $e_i$ such that $e_1<\cdots <e_p<m_u$.  We classify $L$ by whether $L$ admits a special vertex, and in each case we show how to give $T$ an $L$-ranking.
\begin{case}
$L$ admits no special vertex.
\end{case}
If $q'_v<q/2$ and $m_v$ is the largest label in any list, label $v$ with it; since no component of $T-v$ can have $q/2+p-1$ vertices, and $q/2+p-1\leq q-1$, a ranking can be completed by deleting $m_v$ from the list of any unlabeled vertex, and then for each component of $T-v$ giving distinct labels to each vertex.  Now assume $q'_v\geq q/2$ or $m_v$ is not the largest label in any list.  Give each leaf a separate label (which is possible since there are $q$ leaves and each vertex receives a list of size $q$), making sure to give some leaf a label larger than $m_v$ if possible.  

Since for each internal vertex $u$ besides $v$, $T_u$ contains fewer than $p$ internal vertices of $T$ and at least $q/2$ leaves of $T$, we can fix a set $S_u$ of at least $q/2-p+1$ leaves of both $T$ and $T_u$ such that each internal vertex in $T_u$ has at least two neighbors in $T_u$ that are not in $S_u$ (to get $S_u$, delete from the set of leaves in both $T$ and $T_u$ one leaf adjacent to each of the at most $p-1$ internal vertices of $T_u$ adjacent to a leaf of $T$ in $T_u$, and delete an additional leaf if $T_u$ is a star).  

For each internal vertex $u$ besides $v$, $L(u)$ contains at most $p$ of the labels used on $S_u$, since otherwise $u$ would be a special vertex (with $u_1,\ldots,u_p$ being the elements of $S_u$ receiving the smallest labels).  Then $L(u)$ contains at most $q-((q/2-p+1)-p)$, or $q/2+2p-1$, of the labels used on the leaves of $T$, so deleting from each $L(u)$ the labels used on the leaves of $T$ yields a list of size at least $q-(q/2+2p-1)$, or $q/2-2p+1$, which is greater than $p$ for $p\geq 3$.  If $q'_v\geq q/2$, then the same holds for $L(v)$, and we can complete a ranking by giving distinct labels to each of the $p$ internal vertices.  

If $q'_v<q/2$, then by hypothesis $m_v$ is not the largest label in any list.  In this case the largest label must be in the list of some leaf (or else the internal vertex $u$ containing that label would be a special vertex, with $u_1,\ldots,u_p$ being any $p$ elements of $S_u$), and we assigned that label to such a leaf.  Thus we can complete a ranking by labeling $v$ distinctly from the leaves (possible since $|L(v)|=q$ and one of the $q$ leaves was given a label not in $L(v)$) and then labeling the remaining internal vertices distinctly.
\begin{case}
$L$ admits a special vertex $u$.
\end{case}
We use induction on $p$; assume $p\geq 3$.  If $u$ has degree $2$ and is not adjacent to any vertex of degree at least $3$, and some component of $T-u$ is a path, then let $w$ be the vertex nearest to $u$ that has degree $2$ and is adjacent to a vertex of degree at least $3$, and without loss of generality assume no vertex between $u$ and $w$ is special.

Label $u$ with $m_u$ and each $u_i$ with $e_i$, so by the positioning of $u$ and the size of its label, no label given to a vertex separated from $T_u$ by $u$ can cause a conflict with the label of any $u_i$.  If we delete $m_u,e_1,\ldots ,e_p$ from the list of each of the $p_u+q_u-p$ unlabeled vertices in $T_u$ then each such list must still have size at least $q-p-1$, and thus we can finish ranking $T_u$ if the subtree induced by its unlabeled vertices is $(q-p-1)$-list rankable (since no vertex already labeled could be part of a path in $T_u$ between vertices with the same label, once the remaining vertices are labeled from their truncated lists).  By our inductive hypothesis this subtree is in fact $(q-p-1)$-list rankable, since it has $p_u$ internal vertices and $q_u-p$ leaves, with $p_u\leq p-1$ and $q_u-p\geq q/2-p\geq (2^{p+2}-2p-4)/2-p=2^{p+1}-2p-2=2^{(p-1)+2}-2(p-1)-4$.  Thus we can finish ranking $T_u$.

Let $A$ be the set of vertices separated from $T_u$ by $u$, with $b$ denoting the minimum size of a list assigned to a vertex in $A$ after deleting any labels no smaller than $m_u$ that were used on $u$ or $T_u$, and let $A'$ be the (possibly empty) set of vertices strictly between $u$ and $T_u$, with $b'$ denoting the minimum size of a list assigned to a vertex in $A'$ after deleting any labels used on $u$ or $T_u$.  We complete the proof by showing $|A|\leq b$ and $|A'|\leq b'-|A|$: then a ranking of $T$ can be completed trivially by giving vertices in $A$ distinct labels from their truncated lists (these labels cannot come into conflict with any labels given to $T_u$ since they are separated from each other by the label $m_u$ given to $u$) and then giving each of the vertices in $A'$ distinct and previously unused labels.  
\begin{figure}[ht]
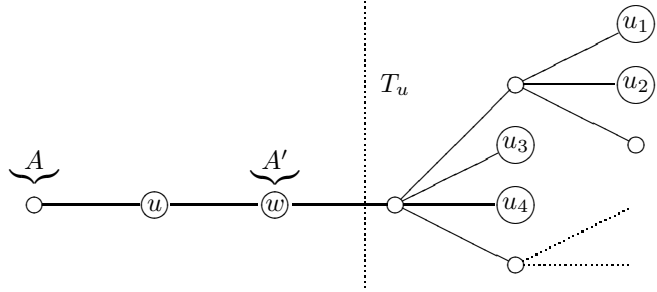

\centering
\[ \xygraph{
!{<0cm,0cm>;<1.6cm,0cm>:<0cm,.8cm>::}
!{(0,0) }*+[o][F-:<3pt>]{}="a"
!{(1,0) }*+[o][F-:<3pt>]{u}="b"
!{(2,0) }*+[o][F-:<3pt>]{w}="c"
!{(3,0) }*+[o][F-:<3pt>]{}="d"
!{(4,2) }*+[o][F-:<3pt>]{}="e"
!{(5,3) }*+[o][F-:<3pt>]{u_1}="f"
!{(5,2) }*+[o][F-:<3pt>]{u_2}="g"
!{(5,1) }*+[o][F-:<3pt>]{}="h"
!{(4,1) }*+[o][F-:<3pt>]{u_3}="i"
!{(4,0) }*+[o][F-:<3pt>]{u_4}="j"
!{(4,-1) }*+[o][F-:<3pt>]{}="k"
!{(2,.75) }*+{A'}*\frm{_\}}
!{(0,.75) }*+{A}*\frm{_\}}
!{(3,2) }*+{T_u}
!{(2.75,3.5) }*+{}="p"
!{(2.75,-1.5) }*+{}="q"
!{(5,0) }*+{}="r"
!{(5,-1) }*+{}="s"
"a"-"b"-"c"-"d"-"e"
"d"-"i" "d"-"j" "d"-"k"
"e"-"f" "e"-"g" "e"-"h"
"p"-@{.}"q"
"k"-@{.}"r"
"k"-@{.}"s"
} \]
\caption{A possibility for $T$, if $u$ is a special vertex and $A'\neq\emptyset$.}
\end{figure}

We have $|A|=p+q-p_u-q_u-|A'|-1$, since $T$ has $p+q$ vertices and $A$ does not include the $p_u+q_u$ vertices of $T_u$, nor the vertices of $A'$, nor $u$.  We also have $b\geq q-(p_u+q_u-p+1)$, since each list assigned to an unlabeled vertex started out with $q$ elements, and the only ones that could have been deleted were $m_u$ as well as any of the at most $p_u+q_u-p$ labels given to $T_u$ that exceeded $m_u$.  Thus $|A|\leq b-|A'|$.

The only vertices left to label are those in $A'$, if $A'$ is nonempty.  Since for each $z\in A'$, $T_z$ contains fewer than $p$ internal vertices of $T$ and exactly $q-1$ leaves of $T$, we can fix a set $S_z$ of at least $q-p$ leaves of both $T$ and $T_z$ such that each internal vertex in $T_z$ has at least two neighbors in $T_z$ that are not in $S_z$ (to get $S_z$, delete from the set of leaves in both $T$ and $T_z$ one leaf adjacent to each of the at most $p-1$ internal vertices of $T_z$ adjacent to a leaf of $T$ in $T_z$, and delete an additional leaf if $T_z$ is a star).  
  
For each $z\in A'$, $L(z)$ contains at most $p$ of the labels used on $S_z$, since otherwise $z$ would be a special vertex (with $z_1,\ldots,z_p$ being the elements of $S_z$ receiving the smallest labels).  Then $L(z)$ contains at most $2p-1$ of the labels used on the leaves of $T_z$, so deleting from each $L(z)$ the labels used on the leaves of $T_z$ or on any of the other $p$ vertices of $T$ besides $z$ (i.e., the leaf of $T$ not in $T_z$ along with any of the $p-1$ internal vertices of $T$ besides $z$) yields a list of size at least $q-3p+1$.  Thus we can finish ranking $T$ because $|A'|\leq p-2\leq q-3p+1\leq b'-|A|$ for $p\geq 3$.
\end{proof}
We conclude by noting that no statement similar to Theorem \ref{Tree} can be applied to graphs in general.
\begin{prop}
For $p\geq 3$ and $q\geq 0$, there is a graph $G$ with $p$ internal vertices and $q$ pendant vertices such that $\rho _{\ell}(G)>q$.
\end{prop}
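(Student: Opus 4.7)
My plan is to construct $G$ by adjoining $q$ pendant vertices to a single vertex of a clique $K_p$. Specifically, I would take $V(G)=\{v_1,\ldots,v_p,w_1,\ldots,w_q\}$, let $\{v_1,\ldots,v_p\}$ induce a $K_p$, and make each $w_j$ adjacent only to $v_1$. Since $p\geq 3$, every $v_i$ has degree at least $p-1\geq 2$ in $G$, so $\{v_1,\ldots,v_p\}$ is exactly the set of internal vertices and $\{w_1,\ldots,w_q\}$ is exactly the set of pendant vertices. This resolves the vertex-count part of the statement.

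To bound $\rho_\ell(G)$ from below, I would apply Proposition \ref{TreeBound} to the star subtree $T$ of $G$ centered at $v_1$, whose edge set consists of all edges of $G$ incident to $v_1$. The leaves of $T$ are the $p-1$ clique vertices $v_2,\ldots,v_p$ together with the $q$ pendants $w_1,\ldots,w_q$, so $T$ has $p-1+q$ leaves. Proposition \ref{TreeBound} then gives $\rho_\ell(G)\geq p-1+q\geq q+2>q$, using $p\geq 3$.

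I do not expect any real obstacle: the whole argument rides on the simple observation that the clique on the internal vertices contributes $p-1$ extra ``star leaves'' at $v_1$ beyond the $q$ true pendants, and this is exactly the feature that is absent in the tree setting and therefore blocks any direct generalization of Theorem \ref{Tree} to arbitrary graphs.
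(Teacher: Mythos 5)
Your proof is correct and follows essentially the same route as the paper: attach the $q$ pendants to a clique $K_p$ and apply Proposition \ref{TreeBound} to a subtree of $G$ with more than $q$ leaves. The only difference is the witness subtree --- the paper uses a spanning tree in which a pendant-free clique vertex is a leaf (giving $q+1$ leaves), whereas your star centered at $v_1$ yields the slightly stronger bound $\rho_{\ell}(G)\geq p-1+q$.
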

\begin{proof}
Let $G$ be obtained by connecting $q$ pendant vertices to the complete graph $K_p$ such that at least one internal vertex $v$ is not adjacent to any pendant vertex.  Then $G$ contains a spanning subtree of which $v$ is a leaf; this tree has $q+1$ leaves, so $\rho _{\ell}(G)>q$ by Proposition \ref{TreeBound}. 
\end{proof}

\section{Acknowledgements} 
\label{sec:acknowledgements}

Many thanks to Doug West for running the combinatorics REGS program and assisting with this project, as well as to Robert Jamison for introducing the list ranking problem and providing initial guidance.

The author acknowledges support from National Science Foundation grant DMS 0838434 EMSW21MCTP: Research Experience for Graduate Students.

\end{document}